\documentclass[11pt]{amsart}
\usepackage{amssymb,graphicx,hyperref, mathtools}
\usepackage{amsmath, amscd, amsthm, amssymb, graphics, mathrsfs, setspace,fancyhdr,tabularx,shapepar, hyperref, xcolor, tikz, cite, booktabs, dsfont,amsfonts, marvosym, amsbsy, bm, tikz, array,MnSymbol}
\usepackage[utf8]{inputenc}
\usepackage{braket}
\usepackage{tikz}
\usepackage{tikz-cd}
\usetikzlibrary{arrows, matrix}
 
 \usepackage[margin=1in]{geometry}

\def\to{\mathchoice{\longrightarrow}{\rightarrow}{\rightarrow}{\rightarrow}}

\newcommand{\ie}{i.e.,\ }
\newcommand{\ra}{\rightarrow}
\newcommand{\dg}{\operatorname{deg}}

\newcommand{\sop}{\; { }_s\!\otimes_p}
\newcommand{\X}{\mathbf X}
\newcommand{\sopl}{\; { }_s\!\otimes^L_p}
\newcommand{\kcr}{\operatorname{CR}^{\mathbb{G}_m}_k}
\newcommand{\Spec}{\operatorname{Spec}}
\newcommand{\QCoh}{\operatorname{QCoh}}

\newcommand{\Perf}{\operatorname{Perf}}

\DeclareMathOperator{\Hom}{Hom}

\newcommand{\sslash}[1]{/ \! \! / \!_{#1}}
\theoremstyle{plain}
\newtheorem{thm}{Theorem}[section]
\newtheorem{prop}[thm]{Proposition}
\newtheorem{cor}[thm]{Corollary}
\newtheorem{lem}[thm]{Lemma}

\theoremstyle{definition}
\newtheorem{defin}[thm]{Definition}
\newtheorem{rem}[thm]{Remark}
\newtheorem{exa}[thm]{Example}

\numberwithin{equation}{section}

\begin{document}
	
	\title[Windows for cdgas]{Windows for cdgas}
	\author[Chidambaram]{Nitin K. Chidambaram}
	\subjclass[2010]{Primary: 14F05, Secondary: 18E30}
	\keywords{grade-restriction windows, VGIT, dg-schemes, derived categories}
	\address{
		\begin{tabular}{l}
			Nitin K. Chidambaram \\
			\hspace{.1in} Max-Planck-Institut f\"ur Mathematik\\
			\hspace{.1in} Vivatsgasse 7, 53111 Bonn, Germany \\
			\hspace{.1in} Email: {\bf kcnitin@mpim-bonn.mpg.de} \\
			\hspace{.1in} Webpage: \url{https://guests.mpim-bonn.mpg.de/kcnitin/} \\
		\end{tabular}
	}
\author[Favero]{David Favero}
\address{
	\begin{tabular}{l}
		David Favero \\
		\hspace{.1in} University of Alberta \\
		\hspace{.1in} Department of Mathematical and Statistical Sciences \\
		\hspace{.1in} Central Academic Building 632, Edmonton, AB, Canada T6G 2C7 \\
		\\
		\hspace{.1in} Korea Institute for Advanced Study \\
		\hspace{.1in} 85 Hoegiro, Dongdaemun-gu, Seoul, Republic of Korea 02455 \\
		\hspace{.1in} Email: {\bf favero@ualberta.ca} \\
		\hspace{.1in} Webpage: \url{https://sites.ualberta.ca/~favero/} \\
	\end{tabular}
}

\begin{abstract}
We study a Fourier-Mukai kernel associated to a GIT wall-crossing for arbitrarily singular (not necessarily reduced or irreducible) affine varieties over any field.  This kernel is closely related to a derived fiber product diagram for the wall-crossing and simple to understand from the viewpoint of commutative differential graded algebras.  However, from the perspective of algebraic varieties, the kernel can be quite complicated, corresponding to a complex with multiple homology sheaves.  Under mild assumptions in the Calabi-Yau case, we prove that this kernel provides an equivalence between the category of perfect complexes on the two GIT quotients.   More generally, we obtain semi-orthogonal decompositions which show that these categories differ by a certain number of copies of the derived category of the derived fixed locus.   The derived equivalence for the Mukai flop is recovered as a very special case.

\end{abstract}

\maketitle

\tableofcontents

\section{Introduction}

Flops are some of the most elementary birational transformations. A deep relationship between birational geometry and derived categories originated in the work of  Bondal and Orlov \cite{BO}.  They conjectured that flops have equivalent derived categories; Kawamata refined and generalized this further in his famous K-equivalence implies D-equivalence conjecture\cite{Kaw}.

There are various  solutions to this problem for explicit types of flops; for example, Bondal and Orlov \cite{BO} for the standard/Atiyah flop, Bridgeland\cite{BridgeFlops} for flops in dimension $ 3 $, and Namikawa and Kawamata for (stratified) Mukai flops \cite{NamiFlops, KawFlops, Cautis}. However there is no agreed upon solution to tackle this problem in full generality.

One of the various successful techniques in addressing this problem, known as ``grade-restriction windows'', was introduced by Segal\cite{Seg} based on ideas from string theory revealed by physicists Herbst, Hori, and Page\cite{HHP}.  Segal's work and the introduction of this technique quickly lead to significant generalizations by Ballard, Favero and Katzarkov \cite{BFKe} and Halpern-Leistner\cite{HL} i.e.\ grade-restriction windows can be used to prove Kawamata's D-equivalence implies K-equivalence conjecture for certain Variation of Geometric Invariant Theory (VGIT) problems, even in non-abelian cases.

Due to a result of Reid (see\cite[Proposition 1.7]{Tha}), one can reduce the problem of flops to a VGIT problem. In more detail, given a flop 
\[\begin{tikzcd}
Y_1 \arrow[r,dotted] & Y_2,
\end{tikzcd}
\] one can find a scheme  $ Y $ with a $ \mathbb{G}_m $-action, such that $ Y_1 $ and $ Y_2 $ are realized as two different GIT quotients.  Hence, we may try to prove Bondal-Orlov--Kawamata's conjecture in general using the variation of GIT quotients.
In the known cases, the idea is as follows.

Consider a scheme $ Y $ equipped with an action of a linearly reductive group $ G $. We denote the GIT quotient with respect to  an $ G $-equivariant ample line bundle as $ Y\sslash{G} $. The \textit{window functor} is a fully faithful functor 
\[
\Phi : D(Y\sslash{G}) \longrightarrow D(\operatorname{QCoh}^G Y).
\] The essential image of this functor is referred to as the \textit{window subcategory} or as the window in short. By choosing a different linearization, we can consider a different GIT quotient and construct a different window functor. The  proof of the equivalence between the derived categories of the different GIT quotients works by comparing the two window subcategories in $  D(\operatorname{QCoh}^G Y) $.

Though it is not constructed this way in \cite{BFKe, HL}, the window functor $\Phi$ is expected to be of geometric origin.  Namely, it should be defined by a Fourier-Mukai kernel $ P $ which is an object in $ D(Y\sslash{G} \times [Y/{G}]) $, such that
\[
\Phi(F) = {\pi_2}_*(P \otimes \pi_1^* F),
\] where $ \pi_1 $ and $ \pi_2 $ are projections from $ Y\sslash{G} \times [Y/{G}] $ to the first and second factor respectively.  We note here that even in examples of flops where the derived equivalences are known to hold, there is not always a construction of the Fourier-Mukai kernel that induces this equivalence.

Recently, Ballard, Diemer, and Favero \cite{bdf}  proposed a consistent way to produce homologically  well behaved  Fourier-Mukai kernels for these window functors in the case of $ \mathbb{G}_m  $-actions, called the \textit{$ Q$-construction}. The definition of the object $ Q $ has the following geometric motivation: it is the  partial compactification of the group action, and when the group is $ \mathbb G_m $, it generalizes a construction of Drinfeld \cite{drin}. The object $ Q $ satisfies various nice properties; it is a functorial assignment (see \cite[Section 2]{bdf} for details), and it behaves well with respect to base change. Generalizing this definition of $ Q $ to actions of more general groups (i.e.\  not $ \mathbb G_m $) is not obvious, but an extension to $ GL_n $-actions for Grassmannian flops was analyzed  in \cite{BCFMV}.  

In \cite{bdf}, the authors recovered grade-restriction windows for \textit{smooth} affine schemes $ X $ with a $ \mathbb{G}_m $-action via an explicit construction of a kernel. However, birational geometry demands that we study more than smooth schemes.  Indeed, even in the setting of smooth flops, the scheme $ X $ is often singular (for example, the Mukai flop). In the case of singular affine schemes Ballard, Diemer, and Favero proposed a \textit{derived $ Q $-construction}, which uses techniques from derived algebraic geometry.   Furthermore, they conjecture that this derived $ Q $-construction provides a Fourier-Mukai equivalence for flops (i.e.\ they propose an explicit method to prove Bondal-Orlov-Kawamata's conjecture). 

In order to tackle this problem of VGIT for singular schemes, the idea is to resolve the singular scheme, by a smooth dg-scheme and then apply the $  Q $-construction. Instead of working in the simplicial setting (as \cite{bdf} does), we use the philosophy of the monoidal Dold-Kan correspondence to work in the dg setting instead. 

\subsection{Windows}

In this paper, we develop a theory of variation of GIT quotients arising from semi-free commutative differential graded algebras (cdga). This uses the language of dg-schemes as developed in \cite{CFK, Riche, BR, MR}. We prefer to use this over the more involved machinery of derived algebraic geometry (DAG) as it is  explicit and better suited (at least for us) for computations.

Let us start with a $ \mathbb Z $-graded semi-free (\ie free upon forgetting the differential) cdga $ R $ over a smooth finitely-generated $ k $-algebra $ T $, where $ k $ is an arbitrary field. (We can allow $ k $ to be an arbitrary Noetherian ring for  some of the following results, but we restrict to the case of field in the introduction for simplicity.) We will  assume that the homological degree zero part is $ R^0 = T $. This data gives a dg-scheme with a $\mathbb G_m$-action (for more details about dg-schemes, see Section~\ref{sec:dg}),
	\[
	\X := (X, \mathcal R).
	\] Here, $ X = \Spec T $, and $ \mathcal R $ is the cdga $ R $ considered as a dg-$ \mathcal O_X $-module on $ X $. Let us denote the ideal generated by all the positively graded elements in $ T $ by  $ J^+ $, and the associated semi-stable locus as  
	\[
	X^+ := X - V(J^+).
	\] We can also consider the restriction of the sheaf of cdgas $ \mathcal R $ to the semi-stable locus, and denote it by $ \mathcal R^+$. Then, we define the positive semi-stable locus of the dg-scheme $ \X $ as 
	\[
	\X^+ := (X^+, \mathcal R^+).
	\]
	
In Section~\ref{sec:Qcdga}, we define a Fourier-Mukai kernel $Q_+$ and following \cite{bdf}, we  get a fully-faithful functor $ \Phi_{Q_+} $ without any assumptions.
\begin{prop}[Lemmas~\ref{lem:FF} and~\ref{lem:QP}]\label{prop:ffwf}
There is a fully-faithful (window) functor
\[
\Phi_{Q_+} : D(\QCoh^{\mathbb{G}_m}(\X^+)) \longrightarrow D(\QCoh^{\mathbb{G}_m}(\X)).
\]
\end{prop}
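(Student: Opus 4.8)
The plan is to realize $\Phi_{Q_+}$ as a Fourier--Mukai transform and then prove full-faithfulness by identifying its composite with a restriction functor as the identity. Concretely, following the formula recalled in the introduction, write
\[
\Phi_{Q_+}(\mathcal{F}) = {\pi_2}_*\!\left(\pi_1^* \mathcal{F} \otimes^L Q_+\right),
\]
where $\pi_1,\pi_2$ are the two projections off the product $\X^+ \times \X$ and $Q_+$ is the kernel constructed in Section~\ref{sec:Qcdga}. The first step is to exhibit the right adjoint of $\Phi_{Q_+}$ as the restriction $i^* : D(\QCoh^{\mathbb{G}_m}(\X)) \to D(\QCoh^{\mathbb{G}_m}(\X^+))$ along the open immersion $i : \X^+ \hookrightarrow \X$. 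Since $i$ is an open immersion, $i^*$ is exact and flat base change holds with no derived correction, so setting up this adjunction should be formal once the kernel is fixed. Under the adjunction $\Phi_{Q_+} \dashv i^*$, full-faithfulness of $\Phi_{Q_+}$ is equivalent to the unit $\mathrm{id} \to i^* \circ \Phi_{Q_+}$ being an isomorphism.

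The central step is then to compute this composite. Applying flat base change along the open immersion together with the projection formula, $i^* \circ \Phi_{Q_+}$ is again a Fourier--Mukai transform on $\X^+$, now with kernel the restriction of $Q_+$ to $\X^+ \times \X^+$. The key claim, which I expect to be the content of Lemma~\ref{lem:QP}, is that this restricted kernel is quasi-isomorphic to the structure sheaf $\mathcal{O}_{\Delta}$ of the diagonal on $\X^+ \times \X^+$, whose transform is $\mathrm{id}$. Here the geometry of the $Q$-construction is decisive: $Q$ is the partial compactification of the $\mathbb{G}_m$-action, so its restriction encodes the closure of the graph of the action map. Over the semi-stable locus $X^+ = X - V(J^+)$ the action is sufficiently nondegenerate that the boundary contribution---the fiber where the compactifying parameter degenerates---drops out, and the compactified graph collapses to the graph of the identity, i.e.\ the diagonal. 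With the restricted kernel identified, the fully-faithfulness criterion of Lemma~\ref{lem:FF} applies and yields the claim with no further hypotheses.

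I expect the main obstacle to be this kernel computation carried out in the dg/semi-free setting. Because $Q_+$ is genuinely a complex with several nonzero homology sheaves (as stressed in the abstract), evaluating ${\pi_2}_*$ and the derived tensor product simultaneously requires choosing an explicit semi-free resolution of $Q_+$ of Koszul type and controlling the resulting filtration or spectral sequence. The derived pushforward along $\pi_2$ amounts to a cohomology computation in the compactifying $\mathbb{A}^1$-direction, graded by $\mathbb{G}_m$-weight, and the grade-restriction window is precisely the range of weights for which this cohomology degenerates onto the diagonal. The technical heart is therefore the weight bookkeeping together with the vanishing of the higher contributions over $\X^+$. Functoriality and base-change properties of the $Q$-construction from \cite{bdf} should reduce the general cdga case to this computation, so once the semi-free model is fixed the argument proceeds by a direct, if delicate, homological analysis.
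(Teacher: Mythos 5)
There is a genuine gap, and it sits exactly where you wave your hands: the claimed adjunction $\Phi_{Q_+} \dashv i^*$. This is not ``formal once the kernel is fixed.'' The restriction $i^* = j^*$ along the open immersion has a \emph{right} adjoint $j_*$; for it to also be the right adjoint of $\Phi_{Q_+}$ you would need $\Phi_{Q_+}$ to be a left adjoint of $j^*$, i.e.\ an extension-by-zero-type functor $j_!$, whose existence and identification with $\Phi_{Q_+}$ is essentially the content of the theorem rather than an input to it. Without that adjunction, your (correct) computation that the kernel of $i^*\circ\Phi_{Q_+}$ restricts to the diagonal --- this is Lemma~\ref{lem:Qdelta} in the paper, used in Lemma~\ref{lem:faithfulness} --- only shows that $\Phi_{Q_+}$ admits a left inverse, which gives \emph{faithfulness} (and splittings on Hom-sets) but not fullness: a functor with a retraction need not be full. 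You have also misidentified Lemma~\ref{lem:QP}; it is not the statement that $(j\times j)^*Q_+$ is the diagonal.

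What the paper actually proves for fullness is a convolution idempotency of the kernel, Property~$P$: the natural map $\rho\colon \left(Q(R)\sopl Q(R)\right)_0 \to Q(R)$ is an isomorphism (Lemma~\ref{lem:QP}, reduced to affine space via an explicit Koszul-type semi-free resolution and the Luna slice theorem). This makes $\Phi_{Q(R)}$ part of a Bousfield triangle $\Phi_{Q(R)}\to\operatorname{Id}\to\Phi_{\operatorname{cone}(\eta)}$ (Lemma~\ref{lem:BT1}); combined with the second Bousfield triangle built from $j_*j^*$ (Lemma~\ref{lem:BT2}) and the gluing lemma \cite[Lemma 3.3.5]{bdf}, one obtains the weak semi-orthogonal decomposition of Lemma~\ref{lem:FF}, in which $\operatorname{Im}\Phi_{Q_+}$ appears as a component; full-faithfulness then follows. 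So the missing idea in your proposal is precisely the statement $(Q\sopl Q)_0\cong Q$ and the (co)localization formalism that converts it into fullness; the diagonal computation you propose cannot substitute for it.
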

In order to identify the essential image of the window functor (known as a window) explicitly, we need to impose a condition on the internal degree of the homological generators of $ R $ over $ T $. We  also have to restrict to the subcategory of perfect objects (as in Definition~\ref{def:perf}).
\begin{thm}[Theorem \ref{thm:Qequiv}]\label{thm:introQequiv}
Assume that $R$ is generated as a $T$-algebra by non-positive elements. The functor
	\[
	\Phi_{Q_+}  : \operatorname{Perf}^{\mathbb G_m}(\X^+) \longrightarrow \mathbb{W}^+_{\X}
	\] is an equivalence of categories where $\mathbb{W}^+_{\X}$ is \'etale locally the full subcategory generated by $\mathcal R(i)$ for $i$ in a prescribed range (see Definition~\ref{defin: window}). 
\end{thm}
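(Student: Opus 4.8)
The plan is to build on the fully faithfulness already supplied by Proposition~\ref{prop:ffwf} and reduce the theorem to two checks performed on a generating set: that the essential image of $\Phi_{Q_+}$ is \emph{contained} in the window $\mathbb{W}^+_{\X}$, and that this image \emph{contains} a generating set of $\mathbb{W}^+_{\X}$. Since $\mathbb{W}^+_{\X}$ is only defined étale locally, I would first reduce to an étale-local model of $X=\Spec T$ on which $\mathbb{W}^+_{\X}$ is genuinely the thick triangulated subcategory generated by the twists $\mathcal{R}(i)$ with $i$ in the prescribed range; being an equivalence of categories is local on the base, and since $Q_+$ and hence $\Phi_{Q_+}$ commute with base change, the global statement descends. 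Throughout I use that $\operatorname{Perf}^{\mathbb{G}_m}(\X^+)$ is idempotent-complete and generated, as a thick subcategory, by the twists $\{\mathcal{R}^+(i)\}_{i\in\mathbb{Z}}$, and that $\Phi_{Q_+}$ is exact and preserves perfection because $Q_+$ is a perfect kernel relative to each projection.

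The first half is the containment $\Phi_{Q_+}\big(\operatorname{Perf}^{\mathbb{G}_m}(\X^+)\big)\subseteq \mathbb{W}^+_{\X}$, which is also what makes the functor in the statement well-defined. As $\Phi_{Q_+}$ is exact and the source is generated by the $\mathcal{R}^+(i)$, it suffices to show $\Phi_{Q_+}(\mathcal{R}^+(i))\in\mathbb{W}^+_{\X}$ for every $i\in\mathbb{Z}$. I would compute this image directly from $\Phi_{Q_+}(F)=\pi_{2*}(Q_+\otimes\pi_1^*F)$, using the explicit description of $Q_+$ from Section~\ref{sec:Qcdga} together with a semi-free (Koszul-type) model for $\mathcal{R}$. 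The hypothesis that $R$ is generated over $T$ by elements of non-positive internal weight is precisely what bounds the $\mathbb{G}_m$-weights occurring in $\pi_{2*}(Q_+\otimes\pi_1^*\mathcal{R}^+(i))$, forcing every homology twist of the image into the prescribed range and hence into the thick subcategory generated by the window twists.

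The second half is essential surjectivity. Since the essential image is thick and $\mathbb{W}^+_{\X}$ is generated by the $\mathcal{R}(i)$ with $i$ in the range, it is enough to exhibit each such $\mathcal{R}(i)$ in the image, and I would prove the sharper statement $\Phi_{Q_+}(\mathcal{R}^+(i))\simeq\mathcal{R}(i)$ for those $i$. One half of the data is automatic: from the construction of $Q_+$ one checks that $j^*\Phi_{Q_+}\simeq\operatorname{id}$, where $j\colon\X^+\hookrightarrow\X$, so $\Phi_{Q_+}(\mathcal{R}^+(i))$ and $\mathcal{R}(i)$ have the same restriction to the semistable locus. The content is that for $i$ in the window range there is no extra contribution supported on $V(J^+)$, i.e.\ that the weight computation of the previous paragraph returns exactly the single twist $\mathcal{R}(i)$; this is where the derived fixed locus contributions, which produce the extra copies in the more general semi-orthogonal decompositions, are made to vanish by the non-positivity assumption. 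Granting this, the essential image is a thick subcategory of $\mathbb{W}^+_{\X}$ containing all its generators, hence equals $\mathbb{W}^+_{\X}$, and together with full faithfulness this yields the asserted equivalence.

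The main obstacle I anticipate lies in the explicit computation of $\Phi_{Q_+}(\mathcal{R}^+(i))$ underlying both halves. Unlike the smooth case of \cite{bdf}, the kernel $Q_+$ here is a genuine complex with several nonzero homology sheaves, so the convolution $\pi_{2*}(Q_+\otimes\pi_1^*\mathcal{R}^+(i))$ does not collapse to a single sheaf and its weight spectrum must be tracked through the entire semi-free resolution of $\mathcal{R}$. Making the non-positivity hypothesis carry out this bookkeeping—guaranteeing both that no weights leak outside the prescribed range and that the boundary identification $\Phi_{Q_+}(\mathcal{R}^+(i))\simeq\mathcal{R}(i)$ holds on the nose rather than merely up to a filtration—is the crux, and is exactly the point at which the derived structure of the cdga $R$ cannot be circumvented.
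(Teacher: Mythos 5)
Your overall architecture is the right one and matches the paper's: full faithfulness is already supplied by Lemmas~\ref{lem:FF} and~\ref{lem:QP}, so the theorem reduces to (a) the image of $\Phi_{Q_+}$ lands in $\mathbb{W}^+_{\X}$ and (b) the image contains a generating set, established by showing $\Phi_{Q_+}(j^*\mathcal{R}(i))\cong\mathcal{R}(i)$ for $i$ in the prescribed range. Your direct computation of $\pi_{2*}(Q_+\otimes\pi_1^*\mathcal{R}^+(i))$ with weight bookkeeping controlled by the non-positivity of the generators is exactly how the paper handles the case $T=k[\mathbf{x},\mathbf{y}]$ (Lemma~\ref{lem:window}, via a \v{C}ech resolution obtained by inverting the $x_i$), and your intuition about where the hypothesis enters is correct there.

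The gap is in how you pass from this local model to general smooth $T$. You assert that ``being an equivalence of categories is local on the base'' and that the statement descends along an \'etale cover, but no such descent is set up: essential surjectivity onto a subcategory defined by \'etale-local conditions at fixed points, for equivariant derived categories of dg-schemes, does not follow formally, and even producing the \'etale-local linear model requires the Luna slice theorem together with a compatible localization of the source category $\Perf^{\mathbb{G}_m}(\X^+)$ and of the kernel. The paper avoids this entirely: the key structural input you are missing is the isomorphism $Q(R)\cong Q(T)\,{}_s\!\otimes_T R$, which holds precisely because the generators $f_i$ have non-positive weight, and which yields $\Phi_{Q(R)}\circ f^*\cong f^*\circ\Phi_{Q(T)}$ for the forgetful morphism $f:\X\to X$ (Lemma~\ref{lem:Qfcomm}). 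Combined with the projection formula this gives $\Phi_{Q(R)_+}\circ f^{+*}\cong f^*\circ\Phi_{Q(T)_+}$, so since $\Perf^{\mathbb{G}_m}(\X^+)$ is generated by the image of $f^{+*}$, the whole computation is pushed down to the underlying smooth classical scheme, where the identification of the window is the already-known result \cite[Theorem 4.2.9]{bdf} (Lemma~\ref{lem:bdfequiv}); that is where the Luna-slice/\'etale-local analysis lives, not in the dg statement itself. Without either this reduction or a genuine descent argument for the dg-equivalence, your proof only covers the affine-space case.
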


Perhaps a more invariant way of  requiring that $ R $ is generated by (internal) degree zero elements over $ T $, is to require that the restriction  of the relative cotangent complex  $	\mathbb L_{R/T} $  to the fixed locus lives in (internal) degree zero. See Remark~\ref{rem:Ldeg} for more details.

\subsection{Wall-crossing for cdgas}
Using the window functor and the explicit description of the window from the previous section, we can compare the derived categories of the positive and negative GIT quotients. Thus, we obtain the following results about wall-crossings.

 By considering  the ideal $ J^- $ generated by the negatively graded elements of $ T $, we may define the negative semi-stable locus  of the dg-scheme $ \X $ analogously to the previous section, \ie
	\[
	\X^- := (X^-, \mathcal R^-).
	\] We can also prove the analogue of the window statement (Theorem~\ref{thm:introQequiv}) for $ \X^- $ if we assume that the cdga $ R $ is generated by non-negatively graded elements. We can combine these results to understand the wall-crossings between $ \X^+ $ and $ \X^- $.
	
	Let $ \mu_{\pm} $ be the sum of the weights of the conormal bundle of $ \operatorname{Spec} T/J^{\pm} $ in $ X $. Let $ j_{\pm} : \X^{\pm} \hookrightarrow \X $ be the inclusion of dg-schemes. 
	\begin{thm}[Theorem \ref{thm:wcequiv}  ]\label{thm:intro1} Assume that the dg-algebra generators of $R$ over $T$ have internal degree zero. When, $ \mu_{+} + \mu_{-} = 0 $, the wall crossing functor
			\[
			\Phi^{\operatorname{wc}}_+ := j_{-}^* \circ \left(-\otimes\mathcal{O}(-\mu_+-1)\right)\circ \Phi_{Q_+} : \operatorname{Perf}^{\mathbb{G}_m}(\X^+) \longrightarrow \operatorname{Perf}^{\mathbb{G}_m}(\X^-)
			\] is an equivalence of categories. The inverse functor is 
			\[
			\Phi^{\operatorname{wc}}_- := j_{+}^* \circ \left(-\otimes\mathcal{O}(-\mu_-+1)\right)\circ \Phi_{Q_-} 
			\]
	\end{thm}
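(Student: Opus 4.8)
The plan is to realize each wall-crossing functor as a composite of three equivalences and to isolate the single nontrivial comparison between them. Since the hypothesis that the dg-algebra generators of $R$ over $T$ have internal degree zero implies both that $R$ is generated by non-positive elements and that it is generated by non-negative elements, Theorem~\ref{thm:introQequiv} and its negative analogue both apply: $\Phi_{Q_+}$ and $\Phi_{Q_-}$ are equivalences onto the windows $\mathbb{W}^+_{\X}$ and $\mathbb{W}^-_{\X}$. Moreover, each window functor is a section of the corresponding restriction, i.e.\ $j_{\pm}^*\circ\Phi_{Q_\pm}\simeq\operatorname{id}$, which follows from the construction of the kernels $Q_\pm$ (their restriction to the semistable locus is the identity kernel). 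Consequently $j_{\pm}^*$ restricts to the inverse equivalence $\mathbb{W}^{\pm}_{\X}\xrightarrow{\sim}\operatorname{Perf}^{\mathbb{G}_m}(\X^{\pm})$, and in particular $\Phi_{Q_\pm}\circ j_{\pm}^*\simeq\operatorname{id}$ on $\mathbb{W}^{\pm}_{\X}$. First I would record these facts and note that the twist $T_d:=-\otimes\mathcal{O}(d)$ is an auto-equivalence of $D(\QCoh^{\mathbb{G}_m}(\X))$ which shifts internal weights by $d$.

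The heart of the argument is a window-matching identity: under the Calabi--Yau hypothesis $\mu_++\mu_-=0$,
\[
\mathbb{W}^+_{\X}\otimes\mathcal{O}(-\mu_+-1)=\mathbb{W}^-_{\X}.
\]
Since $\mathbb{W}^{\pm}_{\X}$ is, étale locally, generated by the twists $\mathcal{R}(i)$ with $i$ ranging over a prescribed interval (Definition~\ref{defin: window}), and since those intervals have widths $\mu_+$ and $-\mu_-$ respectively, the hypothesis $\mu_+=-\mu_-$ forces the two generating intervals to have \emph{equal length}. The twist by $\mathcal{O}(-\mu_+-1)$ translates the positive interval by $-\mu_+-1$, and I would check that this translation carries it exactly onto the negative interval; equality of the generating sets $\{\mathcal{R}(i)\}$ then yields equality of the étale-local subcategories, which I would glue to the asserted global identity. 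This is the main obstacle: it requires pinning down the endpoints of the window intervals from Definition~\ref{defin: window}, tracking the precise shift produced by the chosen twist, and verifying that the étale-local matching descends. One must also confirm that restriction along $j_-^*$ carries the twisted window into $\operatorname{Perf}^{\mathbb{G}_m}(\X^-)$, which is automatic once the windows are matched.

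With the matching in hand the proof assembles quickly. The functor $\Phi^{\operatorname{wc}}_+=j_-^*\circ T_{-\mu_+-1}\circ\Phi_{Q_+}$ is the composite of the equivalence $\Phi_{Q_+}\colon\operatorname{Perf}^{\mathbb{G}_m}(\X^+)\xrightarrow{\sim}\mathbb{W}^+_{\X}$, the twist-equivalence $\mathbb{W}^+_{\X}\xrightarrow{\sim}\mathbb{W}^+_{\X}\otimes\mathcal{O}(-\mu_+-1)=\mathbb{W}^-_{\X}$, and the restriction-equivalence $j_-^*\colon\mathbb{W}^-_{\X}\xrightarrow{\sim}\operatorname{Perf}^{\mathbb{G}_m}(\X^-)$; hence it is an equivalence.

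To see that $\Phi^{\operatorname{wc}}_-$ is its inverse I would compute the composite directly. Because the input $T_{-\mu_+-1}\Phi_{Q_+}(F)$ lies in $\mathbb{W}^-_{\X}$ by the matching identity, the middle factor $\Phi_{Q_-}\circ j_-^*$ acts as the identity on it, so
\begin{align*}
\Phi^{\operatorname{wc}}_-\circ\Phi^{\operatorname{wc}}_+
&\simeq j_+^*\circ T_{-\mu_-+1}\circ T_{-\mu_+-1}\circ\Phi_{Q_+} \\
&\simeq j_+^*\circ T_{-(\mu_++\mu_-)}\circ\Phi_{Q_+}
\simeq j_+^*\circ\Phi_{Q_+}\simeq\operatorname{id},
\end{align*}
where the penultimate step uses $\mu_++\mu_-=0$ to collapse the total twist to $\mathcal{O}$. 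The symmetric computation, which uses the equivalent matching $T_{-\mu_-+1}\mathbb{W}^-_{\X}=\mathbb{W}^+_{\X}$, gives $\Phi^{\operatorname{wc}}_+\circ\Phi^{\operatorname{wc}}_-\simeq\operatorname{id}$, completing the proof.
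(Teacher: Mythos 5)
Your proposal is correct and follows essentially the same route as the paper's proof of Theorem~\ref{thm:wcequiv}: the two window equivalences of Theorems~\ref{thm:Qequiv} and~\ref{thm:Qequivn}, the matching of $\mathbb{W}^+_{\X}$ with $\mathbb{W}^-_{\X}$ by a twist under the Calabi--Yau condition $\mu_++\mu_-=0$, and the fact that $j_{\pm}^*$ inverts $\Phi_{Q_\pm}$. One caveat: the endpoint check you defer is worth actually doing, because with the convention $(M(i))_j=M_{i+j}$ and windows generated in the ranges $(-\mu_+,0]$ and $[0,-\mu_-)$, the twist that carries the positive window onto the negative one is $-\otimes\mathcal{R}(\mu_+-1)$ as in the body's Theorem~\ref{thm:wcequiv}, not $-\otimes\mathcal{O}(-\mu_+-1)$ as in the introduction's statement you were given, so carrying out the computation would have exposed this sign discrepancy between the two versions of the theorem.
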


We also study the case where $ \mu_{+} + \mu_- \neq 0 $ and $ X $ is isomorphic to an affine space over an arbitrary Noetherian ring $ k $.
 Consider the ring $$ T = k[\mathbf x, \mathbf y], $$ where we use the shorthand notation $ \mathbf x  $ to mean $ x_1,x_2,\cdots,x_l $ and $   \mathbf y   $ to mean $ y_1,y_2,\cdots,y_m $ with internal $ \mathbb Z $-grading $ \dg x_i  > 0 $ and $ \dg y_i  < 0  $.

 In this setting, we have the following result.

\begin{thm}[Theorem \ref{thm:wcsod}] \label{thm:intro2} Let $ k $ be an arbitrary Noetherian ring. Assume that the algebra generators of $R$ over $ T = k[\mathbf x, \mathbf y]$ have internal degree zero. Consider the (derived) fixed locus $ R^{\mathbb G_m} := R/(\mathbf x, \mathbf y)$.	
	\begin{enumerate}
	\item When, $ \mu_{+} + \mu_{-} > 0 $,  we have the following semi-orthogonal decomposition
	\[
	\Perf^{\mathbb G_m}\X^+  \cong \langle \Perf(R^{\mathbb G_m})_{\mu_++\mu_-},\cdots,  \Perf(R^{\mathbb G_m})_2,  \Perf(R^{\mathbb G_m})_1, \Phi^{wc}_-( \Perf^{\mathbb G_m}\X^-)\rangle.
	\] 
	\item When, $ \mu_{+} + \mu_{-} < 0 $,  we have the following semi-orthogonal decomposition
	\[
	\Perf^{\mathbb G_m}\X^-  \cong \langle  \Perf(R^{\mathbb G_m})_{\mu_+ + \mu_-},\cdots, \Perf(R^{\mathbb G_m})_{-2},  \Perf(R^{\mathbb G_m})_{-1}, \Phi^{wc}_+( \Perf^{\mathbb G_m}\X^+)\rangle.
	\] 
\end{enumerate}
\end{thm}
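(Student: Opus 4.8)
The plan is to deduce both semi-orthogonal decompositions from a comparison of grade-restriction windows inside the ambient category $D(\QCoh^{\mathbb G_m}\X)$, refining the argument that proves the wall-crossing equivalence (Theorem~\ref{thm:wcequiv}) by keeping careful track of the mismatch in window widths caused by $\mu_+ + \mu_- \neq 0$. I will prove part (1) in detail; part (2) then follows by interchanging the roles of $+$ and $-$ (equivalently, by replacing the hypothesis that $R$ is generated by non-positive elements with non-negative ones and reversing the weight ordering).

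First I would use Theorem~\ref{thm:Qequiv} and its $\X^-$-analogue to identify $\Perf^{\mathbb G_m}\X^+$ and $\Perf^{\mathbb G_m}\X^-$ with the windows $\mathbb W^+_\X$ and $\mathbb W^-_\X$. Since $X = \Spec k[\mathbf x, \mathbf y]$ is affine space, the étale-local description of the windows in terms of the twists $\mathcal R(i)$ of Definition~\ref{defin: window} globalizes to honest generating ranges of consecutive integer weights, whose lengths are governed by $\mu_+$ and $\mu_-$. I would then compute how the twist $-\otimes\mathcal O(-\mu_+ - 1)$ and the restriction $j_-^*$ entering $\Phi^{\operatorname{wc}}_-$ translate the generating range of $\mathbb W^-_\X$, and check that, when $\mu_+ + \mu_- > 0$, the resulting subcategory $\Phi^{\operatorname{wc}}_-(\Perf^{\mathbb G_m}\X^-)$ corresponds to a subinterval of the $\X^+$-range that is exactly $\mu_+ + \mu_-$ slices shorter, the missing weights being $1, 2, \dots, \mu_+ + \mu_-$. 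This already yields the inclusion $\Phi^{\operatorname{wc}}_-(\Perf^{\mathbb G_m}\X^-) \subseteq \Perf^{\mathbb G_m}\X^+$ and exhibits the candidate filtration.

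The heart of the argument is to show that each missing weight slice contributes precisely one copy of $\Perf(R^{\mathbb G_m})$. For this I would use the Koszul resolution of the derived fixed locus $\iota : \Spec R^{\mathbb G_m} = \Spec R/(\mathbf x, \mathbf y) \hookrightarrow \X$: because the dg-generators of $R$ over $T$ have internal degree zero, the conormal directions are exactly $\mathbf x$ and $\mathbf y$, with weights $\{\dg x_i\} \cup \{\dg y_j\}$ summing to $\mu_+ + \mu_-$. Placing an object of $\Perf(R^{\mathbb G_m})$ in internal weight $i$ and pushing forward along $\iota$ defines the subcategory $\Perf(R^{\mathbb G_m})_i$; I would verify that this functor is fully faithful onto its image and that the image lands in $\Perf^{\mathbb G_m}\X^+$. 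Semi-orthogonality and the stated ordering then follow from the weight computation $\iota^*\iota_*(-) \simeq (-)\otimes\wedge^\bullet N^\vee$: the exterior algebra $\wedge^\bullet N^\vee$ carries internal weights ranging over subset-sums of the conormal weights, so $\Hom$ between two slices of distinct weight vanishes whenever the weight difference falls outside this range, which forces the one-sided orthogonality between the slices $\Perf(R^{\mathbb G_m})_{\mu_++\mu_-}, \dots, \Perf(R^{\mathbb G_m})_1$ and the embedded copy $\Phi^{\operatorname{wc}}_-(\Perf^{\mathbb G_m}\X^-)$.

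The main obstacle I anticipate is this heart step. Since the fixed locus is genuinely derived — the quotient cdga $R/(\mathbf x, \mathbf y)$ will in general have nontrivial higher homology — one cannot argue with naive sheaf restriction, and the Koszul weight filtration together with the adjoints of $j_\pm^*$ must be manipulated at the level of dg-categories. The delicate point is to show that the grade-restriction window accounts for exactly the integer slices $1, \dots, \mu_+ + \mu_-$, neither over- nor under-counting; this is precisely where the identification of $\mu_\pm$ as the sum of conormal weights, and the affine-space hypothesis that makes the weight decomposition split globally, are indispensable.
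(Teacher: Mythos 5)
Your overall architecture (identify both sides with windows, measure the width mismatch, fill the gap with $\mu_++\mu_-$ copies of $\Perf(R^{\mathbb G_m})$) matches the paper, but the ``heart step'' as you describe it fails: the complementary pieces \emph{cannot} be realized as pushforwards from the derived fixed locus $\Spec R/(\mathbf x,\mathbf y)$. That fixed locus is contained in $V(\mathbf x)=V(J^+)$, which is precisely the unstable locus deleted to form $X^+$ (and likewise it lies in $V(J^-)$, deleted to form $X^-$). Hence for $\iota:\Spec R/(\mathbf x,\mathbf y)\hookrightarrow\X$ one has $j_\pm^*\iota_*=0$, so the subcategories you propose are zero and your claim that ``the image lands in $\Perf^{\mathbb G_m}\X^+$'' after full faithfulness cannot be repaired by weight bookkeeping. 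The correct generators of the complementary pieces are the restrictions of the objects $R/\mathbf y(i)$ --- pushforwards from the dg-subscheme cut out by the \emph{negative} variables only, whose support $V(\mathbf y)\setminus V(\mathbf x)$ does meet $X^+$ (think of the exceptional $\mathbb P^n$ inside $\mathrm{Bl}_0\mathbb A^{n+1}$, not the blown-up point). The category $\Perf(R^{\mathbb G_m})$ then appears not as the source of a pushforward but as the weight-zero derived endomorphism dg-algebra of $R/\mathbf y(i)$: the Koszul resolution gives $\bigwedge^\bullet(\bigoplus k(b_j))\otimes R/\mathbf y$, whose internal-degree-zero part collapses to $R/(\mathbf x,\mathbf y)$ because $k[\mathbf x,\mathbf e]$ has no negative weights, and Keller's theorem on compactly generated dg-categories converts this into an equivalence $\langle R/\mathbf y(i)\rangle\simeq\Perf(R^{\mathbb G_m})$. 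Your $\iota^*\iota_*\simeq -\otimes\wedge^\bullet N^\vee$ heuristic is the right spirit, but it must be applied to the attracting/repelling subscheme, with the fixed locus entering only through this endomorphism computation.

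The paper's actual route is also more elementary than your filtration-by-weights argument and handles the generation half of the SOD, which your proposal leaves implicit. Its key lemma (Lemma~\ref{lem:SOD1}) peels a single twist off a window: for $b-a\geq\mu_+$ it shows $\mathbb W_{[a,b+1]}=\langle\Perf(R^{\mathbb G_m}),\mathbb W_{[a,b]}\rangle$, where generation follows because the Koszul complex $R\otimes\mathcal K^\bullet(\mathbf x)(b+1)\simeq R/\mathbf x(b+1)$ expresses the extremal generator $R(b+1)$ in terms of $R/\mathbf x(b+1)$ and lower twists, and semiorthogonality is the one-line computation $R\Hom(R(i),R/\mathbf x(b+1))=k[\mathbf y]_{b+1-i}=0$ for $b+1-i>0$. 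Iterating and then restricting along the open immersion of the semistable locus yields Theorem~\ref{thm:wcsod}. (You should be aware that the published statement of Theorem~\ref{thm:wcsod} writes $j_+^*R/\mathbf x(i)$, which vanishes for the same support reason as above; the intended generators for the decomposition of $\Perf^{\mathbb G_m}\X^+$ are the $j_+^*R/\mathbf y(i)$, mirrored appropriately in case (2). So take care which of $\mathbf x$, $\mathbf y$ you kill when you rewrite your argument.)
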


The main application of the above results that we are interested in is the setting of VGIT problems arising from singular schemes (in particular, those obtained as a VGIT presentation of a flop). 

\subsection{Applications to singular VGIT problems}\label{sec:introVGIT}

Consider  \emph{any} closed subscheme $ Y = \Spec S $ of  a smoooth affine scheme $ X = \Spec T $ of finite type over a field $ k $, which is equipped with a $ \mathbb G_m $-action and equivariant embedding,
\[
Y = \Spec S \hookrightarrow X = \Spec T.
\] 

Then we resolve the $ \mathbb Z $-graded singular ring $ S $ by a $ \mathbb Z $-graded semi-free cdga $ R $, using the Koszul-Tate resolution (which is a generalization of the Koszul resolution for complete intersections) \cite{Tate},

\[
R \simeq S.
\] In particular, $ R $ has only finitely many generators in each homological degree, but could have infinitely many generators in total. Note that the degree zero part $ R^0 = T $. We introduce the following notation for the GIT quotients of $ Y $.
\[
Y\sslash{\pm} :=  [Y - V(J^\pm) /\mathbb{G}_m ].
\]

Now we may use Theorem~\ref{thm:intro1} to  prove the following derived equivalences.

\begin{cor}[Corollary~\ref{cor:ci}]\label{cor:intro} Assume that the dg-algebra generators of $ R $ over $ T $ have internal degree zero. When $ \mu_{+} + \mu_{-} = 0 $, the wall crossing functor
	\[
	\Phi^{\operatorname{wc}} : \operatorname{Perf}(Y\sslash{+}) \longrightarrow \operatorname{Perf}(Y\sslash{-})
	\] is an equivalence of categories.
	In particular, if the semi-stable loci are smooth (for example, the setting of smooth flops),
	\[
	\Phi^{\operatorname{wc}} : D^b(Y\sslash{+}) \longrightarrow D^b(Y\sslash{-})
	\] is an equivalence of categories.
\end{cor}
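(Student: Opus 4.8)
The plan is to deduce this directly from Theorem~\ref{thm:wcequiv} by identifying the category $\operatorname{Perf}(Y\sslash{\pm})$ of perfect complexes on the GIT quotient stack with the category $\operatorname{Perf}^{\mathbb G_m}(\X^{\pm})$ of equivariant perfect complexes on the dg-scheme $\X = (\Spec T, \mathcal R)$ produced by the Koszul--Tate resolution. The hypotheses of the corollary (internal degree zero generators, and $\mu_+ + \mu_- = 0$, with $\mu_{\pm}$ computed on the \emph{smooth} ambient $X = \Spec T$ where the dg-scheme lives) are exactly those of Theorem~\ref{thm:wcequiv}. Once the identification is in place, the functor $\Phi^{\operatorname{wc}}$ is by construction the transport of $\Phi^{\operatorname{wc}}_+$ across these equivalences, so Theorem~\ref{thm:wcequiv} immediately gives that it is an equivalence with inverse the transport of $\Phi^{\operatorname{wc}}_-$.

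The heart of the argument is therefore the equivalence
\[
\operatorname{Perf}^{\mathbb G_m}(\X^{\pm}) \;\cong\; \operatorname{Perf}(Y\sslash{\pm}).
\]
First I would use that the Koszul--Tate resolution is a quasi-isomorphism $R \to S$ of $\mathbb Z$-graded (both homologically and internally) cdgas with $R^0 = T$, $H^0(R) = S$, and $H^{\neq 0}(R) = 0$. By the standard theory of dg-schemes \cite{CFK, Riche}, such a quasi-isomorphism induces an equivalence between the derived category of the dg-scheme $\X$ and that of the honest scheme $Y = \Spec S$, and this equivalence respects the internal $\mathbb G_m$-grading, hence descends to the equivariant categories. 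Since $J^{\pm}$ is generated by internally positively (resp.\ negatively) graded elements of $T$ and $Y \subseteq X$ equivariantly, the semistable locus of $\X$ restricts along $Y$ to $Y^{\pm} := Y - V(J^{\pm}) = Y \cap X^{\pm}$. I would then check that the equivalence above is local on $X$, so that it restricts over the open set $X^{\pm}$ to an equivalence $D(\QCoh^{\mathbb G_m}(\X^{\pm})) \cong D(\QCoh^{\mathbb G_m}(Y^{\pm}))$. Passing to perfect (equivalently compact) objects and unwinding $\operatorname{Perf}(Y\sslash{\pm}) = \operatorname{Perf}([Y^{\pm}/\mathbb G_m]) = \operatorname{Perf}^{\mathbb G_m}(Y^{\pm})$ yields the desired identification.

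With the equivalence of perfect categories in hand, the first assertion follows from Theorem~\ref{thm:wcequiv}. For the final statement, I would argue that when the semistable loci $Y^{\pm}$ are smooth, the quotient stacks $Y\sslash{\pm}$ are smooth, and for a smooth quotient stack the inclusion $\operatorname{Perf} \hookrightarrow D^b(\Coh)$ is an equivalence, since regularity guarantees that every coherent sheaf admits a finite locally free resolution. Hence $\Phi^{\operatorname{wc}}$ upgrades to an equivalence $D^b(Y\sslash{+}) \to D^b(Y\sslash{-})$.

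The main obstacle I anticipate is the careful verification that the quasi-isomorphism $R \simeq S$ induces an equivalence at the level of \emph{equivariant perfect} complexes, compatibly with restriction to the semistable locus. Three subtleties deserve attention: first, the Koszul--Tate resolution may have infinitely many generators in total (though finitely many in each homological degree), so one must confirm that the induced functor preserves perfection and compactness and that no boundedness or completeness pathology intervenes; second, one must track the homological and the internal ($\mathbb G_m$-weight) gradings simultaneously to ensure the equivalence is genuinely equivariant; and third, one must match the support conditions, i.e.\ check that objects of the derived category of $\X^{\pm}$ are supported on $Y^{\pm}$ so that restriction from $X^{\pm}$ to $Y^{\pm}$ loses no information. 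Once these points are settled, the remainder of the argument is formal.
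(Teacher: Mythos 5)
Your proposal is correct and follows essentially the same route as the paper: reduce to Theorem~\ref{thm:wcequiv} via the equivalence $D^{\mathbb G_m}(X^{\pm},\mathcal R^{\pm})\cong D^{\mathbb G_m}(Y^{\pm},\mathcal O)$ induced by the quasi-isomorphism $\mathcal R\simeq k[Y]$ (the paper cites the equivariant extension of \cite[Proposition 1.5.6]{Riche} for this), then match the ad-hoc notion of perfect objects with the standard one on the ordinary scheme, and in the smooth case identify $\operatorname{Perf}$ with $D^b$. The subtleties you flag (gradings, restriction to semistable loci, infinitely many generators) are real but are exactly the points the paper's citation is meant to absorb.
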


As a special case we recover the local case of the equivalence in \cite{NamiFlops,Kaw,Hara,Mor}, using VGIT and window techniques.

\begin{thm}[Corollary \ref{cor:mf}] \label{thm:introthm2}
	For the local model of the Mukai flop over a fixed commutative Noetherian ring $ k $, the wall crossing functor
	\[
	\Phi^{wc} : D^b(Y\sslash{+}) \to D^b(Y\sslash{-})
	\] is an equivalence of categories, and the kernel for the equivalence is 
	\[
	\mathcal{O}_{Y\sslash{+} \times_{Y\sslash{\,0}} Y\sslash{-}}
	\]
\end{thm}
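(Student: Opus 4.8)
The plan is to exhibit the local Mukai flop as an instance of the singular VGIT problem governed by Corollary~\ref{cor:intro}, and then to compute the wall-crossing kernel explicitly. Take $T = k[x_0,\dots,x_n,y_0,\dots,y_n]$ with the $\mathbb{G}_m$-action of internal weights $\dg x_i = 1$ and $\dg y_i = -1$, and let $Y = \Spec S$ be the hypersurface $\{f = 0\}$ for the invariant $f = \sum_{i} x_i y_i$ of internal degree zero. Then the semi-stable loci are the two sides of the flop, $Y\sslash{+} \cong T^*\mathbb{P}^n$ and $Y\sslash{-} \cong T^*(\mathbb{P}^n)^{\vee}$, while $Y\sslash{0} = \Spec S^{\mathbb{G}_m}$ is their common affinization (the cone of traceless rank $\le 1$ matrices). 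Since $f$ is a regular element, its Koszul--Tate resolution is the Koszul complex $R = T[\xi]$ with $\xi$ in homological degree one and $d\xi = f$; as $f$ has internal degree zero, so does $\xi$, so $R$ is generated over $T$ by a single dg-algebra generator of internal degree zero, exactly as the hypotheses require.

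First I would record the numerical input. The unstable locus $V(J^+) = \{x = 0\}$ has conormal directions spanned by the $x_i$, each of weight $+1$, so $\mu_+ = n+1$; symmetrically $V(J^-) = \{y = 0\}$ gives $\mu_- = -(n+1)$, whence $\mu_+ + \mu_- = 0$ and we land in the Calabi--Yau case. Corollary~\ref{cor:intro} then directly produces the equivalence $\Phi^{\operatorname{wc}} : \Perf(Y\sslash{+}) \to \Perf(Y\sslash{-})$, and because the semi-stable loci are smooth --- each $Y\sslash{\pm}$ being a cotangent bundle of projective space --- the second clause of that corollary upgrades it to the asserted equivalence $D^b(Y\sslash{+}) \to D^b(Y\sslash{-})$.

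It remains to identify the kernel, which is the substantive part of the statement. Here I would trace $Q_+$ through the definition of $\Phi^{\operatorname{wc}}_+$ and invoke the general description of the wall-crossing kernel as the pushforward of the structure sheaf of the derived fiber product $Y\sslash{+} \times^{\mathbb{L}}_{Y\sslash{0}} Y\sslash{-}$. For the Koszul model $R = T[\xi]$ this derived fiber product is computed by the relevant dg-tensor product over the degree-zero subalgebra, and the problem reduces to showing that it has no higher homology --- equivalently, that $Y\sslash{+}$ and $Y\sslash{-}$ are Tor-independent over $Y\sslash{0}$. Granting this, the derived fiber product collapses onto the ordinary scheme-theoretic fiber product, and its structure sheaf is exactly the claimed kernel $\mathcal{O}_{Y\sslash{+} \times_{Y\sslash{0}} Y\sslash{-}}$.

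The main obstacle is precisely this Tor-independence, which is delicate because the base $Y\sslash{0}$ is singular at the cone point. I would approach it through a dimension count together with the explicit Koszul model: one has $\dim Y\sslash{\pm} = \dim Y\sslash{0} = 2n$, so the expected dimension of the fiber product is $2n$, and I would check that the scheme-theoretic fiber product is equidimensional of this dimension, its graph-closure component and the exceptional component $\mathbb{P}^n \times (\mathbb{P}^n)^{\vee}$ lying over the cone point both having dimension $2n$. The point requiring genuine care is the locus over the cone point, where these two components meet along the incidence hypersurface $\{\sum_i x_i y_i = 0\}$; there I would argue locally using the dg-model, verifying that the Koszul differential computing the fiber product remains regular so that no embedded or higher homology is introduced, thereby confirming that the derived and classical fiber products coincide.
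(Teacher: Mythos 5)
The first half of your argument (the equivalence itself) is exactly the paper's: set $\mu_+=n+1$, $\mu_-=-(n+1)$, note the single Koszul generator has internal degree zero, apply Corollary~\ref{cor:ci}, and use smoothness of the semistable loci to pass from $\Perf$ to $D^b$. That part is fine. The gap is in the kernel identification, and it is a real one: you ``invoke the general description of the wall-crossing kernel as the pushforward of the structure sheaf of the derived fiber product $Y\sslash{+}\times^{\mathbb L}_{Y\sslash{\,0}}Y\sslash{-}$,'' but no such general description exists in the paper, and the paper goes out of its way to warn that it is false. The kernel is, by construction, the restriction of $Q$ to $\X\sslash{+}\times\X\sslash{-}$ (up to a twist), and its only established relation to a fiber product is Proposition~\ref{prop:fiber}: under the weight-$\pm1$ hypothesis, $Q|_{\X^+\times\X^-}$ is isomorphic to the \emph{underived} tensor product $\mathcal R^+\otimes_{R_{(0)}}\mathcal R^-$ of the dg-resolutions over the invariant subalgebra --- which is not a priori a model for $S^+\otimes^{\mathbb L}_{S_{(0)}}S^-$, since $R$ is semi-free over $T$ but not flat over $R_{(0)}$. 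Example~\ref{ex:Qnotasheaf}, which also has all weights $\pm1$ and is a complete intersection, shows the kernel there has a nontrivial $H^{-1}$ and is not the classical fiber product at all; so the premise your reduction rests on is doing all the work and is unjustified.

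Consequently the step you identify as ``the substantive part'' --- Tor-independence of $Y\sslash{+}$ and $Y\sslash{-}$ over $Y\sslash{\,0}$ --- is answering a different question (derived versus classical fiber product over $Y\sslash{\,0}$) from the one that actually needs answering ($Q$ versus the classical fiber product). Even taken on its own terms, the dimension count you propose does not establish Tor-independence over the singular base $Y\sslash{\,0}$: correct codimension of the components does not control higher Tor over a non-regular ring. What the paper actually does is entirely explicit: starting from Proposition~\ref{prop:fiber} it computes $R\otimes_{R_{(0)}}R$ as a dg-algebra, observing that the degree-zero generator $e$ lies in $R_{(0)}$ and is therefore identified across the two factors, so one obtains $k[\mathbf x,\mathbf y,\mathbf x',\mathbf y',e]/(\mathbf x\mathbf y-\mathbf x'\mathbf y')$ with $de=\sum_a x_ay_a$; killing $e$ against its (nonzerodivisor) differential and using $x_ay_a=x'_ay'_a$ yields $S\otimes_{S_{(0)}}S$, and restriction to the semistable loci gives $\mathcal O_{Y\sslash{+}\times_{Y\sslash{\,0}}Y\sslash{-}}$. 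To repair your argument you would need to either reproduce this computation or supply an actual proof that $\mathcal R^+\otimes_{R_{(0)}}\mathcal R^-$ computes the derived tensor product over $S_{(0)}$ in this case --- neither of which follows from the dimension-theoretic considerations you sketch.
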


 Often, in simple examples of flops such as the Atiyah/standard flops or the Mukai flop, the kernel for the wall-crossing functor is the fiber product of the flop diagram (as shown in Theorem~\ref{thm:introthm2}, for example).    However, this simple Fourier-Mukai kernel is not the right one in general, as one can see in the case of stratified Mukai flops \cite{Cautis}, where the kernel is the pushforward of a locally free sheaf along an open immersion to the fiber product.
 
 Therefore, we want to stress that the kernel we construct is  not always the fiber product. Instead it is related to  the fiber product of a diagram obtained by deriving the scheme   (see Proposition~\ref{prop:fiber} for the precise statement). Indeed, one striking feature of this `derived' construction,  is that the kernel for the  wall-crossing functor described in Corollary~\ref{cor:intro} need not even be a sheaf in general. In Example~\ref{ex:Qnotasheaf}, we show that the kernel is a complex with two homology sheaves that we identify explicitly. Although  the object $ Q $ is very simple as a dg-sheaf on the dg-scheme $ \X \times \X$, it is not easy to describe it as a complex of sheaves on $ Y \times Y $.  We may interpret this observation to mean that the derived construction of $ Q $ hides the complicated nature of the kernel for the wall-crossing functor. (Even further, we think that this observation is a hint that the derived construction of $ Q $ is indeed the right approach!)

\subsubsection{Comparison with the literature}
 
As this paper follows the idea of deriving $ Q $ as presented in \cite{bdf}, a few remarks are in order. As mentioned previously, we use the philosophy of the monoidal Dold-Kan correspondence to work in the dg-setting instead of the simplicial setting of \textit{loc.\ cit.\ }
 
 Firstly, note that the monoidal Dold-Kan correspondence is  a Quillen equivalence between connected cdgas and connected simplicial commutative algebras, over a field of characteristic zero. Hence, if $ k $ is a field of characteristic zero, the functor $ \Phi_{Q_+} $ of Proposition~\ref{prop:ffwf} should agree with the analogous functor defined in \textit{loc.\ cit.\ } by applying the monoidal Dold-Kan correspondence.  However, over an arbitrary Noetherian ring $ k $ (as is the setting of both papers), it is unclear whether or not these functors agree.
 
 Secondly, all of our main results concerning windows and wall-crossings (for example, Theorems~\ref{thm:introQequiv}, \ref{thm:intro1} , \ref{thm:intro2}, \ref{thm:introthm2} and Corollary~\ref{cor:intro}) hinge on an explicit identification of the window subcategory $ \mathbb W $. The paper \cite{bdf} does not identify this window subcategory explicitly (in the context of derived $ Q $), and hence our main results can be seen as the natural continuation of the program presented in \textit{loc.\ cit.\ } 
 
In the  setting of singular VGIT problems (as in Section~\ref{sec:introVGIT}), the cdga $ R $ is chosen to be a semi-free dg-resolution of a singular ring. Under certain assumptions  called Properties $ L(+) $ and $ A $, \cite{HL} also defines a window subcategory and proves that the derived category of the GIT quotient is equivalent to this window (via different methods).

We note that our conditions on the degree in Theorem~\ref{thm:introQequiv} need not satisfy Property $A$ or Property $L^+$ of \cite{HL} (see Example~\ref{ex:degreef}). In fact, these conditions are roughly complementary to the ones of \cite{HL}. Moreover, in Example~\ref{ex:twopoints}, where $L^+$ and $A$ are satisfied (but our Theorem~\ref{thm:introQequiv} does not hold), we check that the essential image of our window functor $ \Phi_{Q_+} $ is the same as the window described in \cite{HL}. In fact, in future work, we will show that this is true in general.   This suggests  that the Fourier-Mukai kernel that we study in this paper is indeed the right one (for example, to solve the Bondal-Orlov-Kawamata conjectures).

\subsection{Outline of the paper}
In Section~\ref{sec:2}, we first set up notation and introduce the language of dg-schemes. We discuss the derived category of equivariant dg-schemes and the existence of derived functors in Section~\ref{sec:dg}. We define the category of perfect complexes in Section~\ref{sec:perf} which is the category we are interested in, and in Section~\ref{sec:VGIT}, we introduce the Variation of GIT quotients problem that we study in this paper.

In Section~\ref{sec:3}, we define the object $ Q $ which is the main object of this paper, and study various properties of it. Section~\ref{sec:Qa} is a brief reminder of the construction of $ Q $ in the setting of (non-dg) rings, and then we define it for semi-free cdgas in Section~\ref{sec:Qcdga}. We study various properties of $ Q $ in Section~\ref{sec:Qprop}, which will be important to understand the Fourier-Mukai transforms induced by it. In Section~\ref{sec:ff}, we find conditions for fully-faithfulness of the window functor defined using $ Q $.

In Section~\ref{sec:4}, we study the window functors and wall-crossing functors defined using $ Q $ as the Fourier-Mukai kernel. We focus on the case where the underlying scheme is affine space in Section~\ref{sec:aff} and the general case of a smooth affine scheme in Section~\ref{sec:gen}. Then, we study the induced wall-crossing functors in Section~\ref{sec:wc} and finally discuss applications to VGIT problems arising from flops in Section~\ref{sec:flops}, and comment on the Fourier-Mukai kernel for the wall-crossing functor.

\subsection{Acknowledgments}  We are very grateful to M. Ballard for many discussions on this work from start to finish (i.e.,\ including its initialization).  We  wish to thank S. Riche for helpful discussions regarding derived categories of  dg-schemes, and T. Bridgeland for comments about the nature of the kernel for wall-crossing functors for flops. We thank J. Rennemo for pointing out a typo regarding perfect objects that appeared in Proposition~\ref{prop:ffwf} in a previous version.  We are grateful to the referees for various helpful comments and corrections. The authors were partially supported by the NSERC Discovery Grant and CRC program.

\section{Setup}\label{sec:2}
  Let us set notation and recall relevant definitions and results about dg-schemes following  \cite{CFK,Riche, MR, BR}. 
  
  \subsection{Notation}
  Throughout, $ k $ will denote a fixed commutative Noetherian ring over which all objects are defined. In this paper, we will only work with commutative differential graded algebras concentrated in non-positive degrees, and henceforth we will simply refer to one as a cdga. Bold letters, e.g.\ $ \mathbf X $, will denote  a dg-scheme, and the underlying ordinary scheme will be denote by unbolded letters, e.g.\ $ X $. We will assume that all our schemes are separated and Noetherian of finite dimension. 
  
  	Often, our objects (cdgas, for example) will have a $ \mathbb{Z} $-grading coming from a $ \mathbb G_m $-action (on dg-schemes, for example) which we will refer to as the \textit{internal} grading, as opposed to the homological grading coming from the cdga structure. In order to denote the homological grading, we will use upper indices, whereas we use lower indices for the internal grading.  
	
	We say that the graded vector space $k(i)$ has weight $i$.  For a graded module $M$, the degree $0$ piece of the shifted module $M(i)$ is the degree $i$ piece of $M$, notated $M_i$ so that  $(M(i))_j = M_{i+j}$.     
   
   \subsection{dg-schemes}\label{sec:dg}
   
    The data of a  dg-scheme $ \mathbf{X} $ is the pair
   \[
   \mathbf{X} = (X , \mathcal{A}),
   \] where $ X $ is a scheme and $ \mathcal{A} $ is a non-positively graded, commutative dg $ \mathcal{O}_X  $-algebra, such that $ \mathcal{A}^i $ is a quasi-coherent $ \mathcal O_X $-module for any $  i \in \mathbb{Z}_{\leq0} $. We denote the homological graded pieces of $ \mathcal A $ by  $ \mathcal A^i $.

  \begin{defin} Let $ \mathbf X  = (X, \mathcal{A}) $ be a dg-scheme. A \textit{quasi-coherent} dg-sheaf $ \mathcal{F} $ on  $ \mathbf{X} $ is a $ \mathcal A $-dg-module such that $ \mathcal{F}^i $ is a quasi-coherent $ \mathcal O_X $-module for any $ i \in \mathbb{Z} $.

   \end{defin}

We define the derived category of quasi-coherent dg-sheaves by $ D(\QCoh \mathbf{X}) $. The derived category is defined in the usual way as the localization of the homotopy category with respect to the class of quasi-isomorphisms. This has the structure of a triangulated category. Note that in the case of ordinary schemes,  i.e., if $ \mathcal A = \mathcal O_X $, the category $ D(\QCoh \mathbf{X}) \cong D (\QCoh X) $.

 In \cite{BR}, it is shown  that there are enough $ K $-flat and $ K $-injective objects in $ \QCoh(\X) $, and hence one can use these to define all right derived functors, and left derived tensor products and pullbacks. They also prove the adjunction between derived pushforwards and  pullbacks,  the projection formula and a base change formula. It is worth noting that in the context of dg-schemes, flatness is not required for the base-change formula; instead we use the derived fiber product.

 In the $ \mathbb{G}_m $-equivariant setting, \ie when we have a scheme $ X $ equipped with a $\mathbb G_m $-action and $ \mathcal A $ is equivariant with respect to this $\mathbb G_m $-action, some of the appropriate generalizations are defined by \cite{MR} under the following technical assumptions:
 
 \begin{enumerate}
 	\item For any $ \mathcal F $ in $ \QCoh^{\mathbb G_m}(X) $, there exists a  $ P $ in $ \QCoh^{\mathbb G_m}(X) $ which is flat over $ \mathcal O_X $ and a surjection $ P \twoheadrightarrow \mathcal  F $ in $ \QCoh^{\mathbb G_m}(X) $.
 	\item  Assume that $ \mathcal A $  is locally free  over $ \mathcal A^0 $,  $ \mathcal A^0 $ is locally finitely generated as an $ \mathcal O_X $-algebra, and finally that $ \mathcal A $ is K-flat as a $\mathbb G_m $-equivariant $ A^0$-dg-module.
 \end{enumerate}
 
 In this paper, we will always work over $ X $ quasi-affine or affine, and hence the first condition is automatically satisfied. We will only consider dg-schemes such that $ \mathcal A^0  = \mathcal O_X $ and that $ \mathcal A $ is semi-free over $ \mathcal O_X $, and hence the second condition is always satisfied as well.
 
 We note that \cite{MR} imposes a slightly stronger technical assumption which is that $ \mathcal A $  is locally free  of finite rank over $ \mathcal A^0 $. However, this assumption is unnecessary in the proof of \cite[Proposition 2.8]{MR}. Moreover, the proof of \cite[Theorem 1.3.6]{Riche} carries over to this  setting as well. Hence we have the following statement.
 
 \begin{lem}
 	Consider a $ \mathbb G_m $-equivariant dg-scheme $ \mathbf X $ satisfying the assumptions above. For any object $ \mathcal F $ in $ \QCoh^{\mathbb G_m} (\X) $, there exists a K-injective equivariant resolution $ \mathcal F \to \mathcal  I $.
 \end{lem}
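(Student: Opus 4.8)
The plan is to reproduce the Spaltenstein-type construction underlying \cite[Theorem 1.3.6]{Riche} and \cite[Proposition 2.8]{MR}, and to isolate the single place where the finite-rank hypothesis on $\mathcal A$ is actually used, showing it can be discarded. The construction rests on two functors: the forgetful functor $U \colon \QCoh^{\mathbb G_m}(\X) \to \QCoh^{\mathbb G_m}(X)$ remembering only the underlying dg-$\mathcal O_X$-module, and its right adjoint, the coinduction functor $C = \mathcal{H}om_{\mathcal O_X}(\mathcal A, -)$, where $\mathcal{H}om_{\mathcal O_X}(\mathcal A, \mathcal G)$ is made into a dg-$\mathcal A$-module in the usual way. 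First I would record that $\QCoh^{\mathbb G_m}(X)$ is a Grothendieck abelian category, since $X$ is (quasi-)affine and Noetherian, so that every complex of equivariant quasi-coherent sheaves on $X$ admits a K-injective resolution by the standard existence of K-injective resolutions in Grothendieck abelian categories.

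The heart of the argument is that $C$ transports K-injectivity from $\mathcal O_X$ to $\mathcal A$. Concretely, for a K-injective complex $\mathcal J$ over $\mathcal O_X$ and any acyclic dg-$\mathcal A$-module $\mathcal N$, the adjunction furnishes an isomorphism of $\Hom$-complexes $\Hom_{\mathcal A}(\mathcal N, C\mathcal J) \cong \Hom_{\mathcal O_X}(U\mathcal N, \mathcal J)$; since $U\mathcal N$ has the same (acyclic) underlying complex and $\mathcal J$ is K-injective over $\mathcal O_X$, the right-hand side is acyclic, whence $C\mathcal J$ is K-injective over $\mathcal A$. The point I want to stress is that this uses \emph{only} the adjunction isomorphism (equivalently, the K-flatness of $\mathcal A$ over $\mathcal O_X$ recorded in assumption (2)), and is completely insensitive to the rank of $\mathcal A$. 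The finite-rank hypothesis of \emph{loc.\ cit.} enters solely to guarantee that $C\mathcal J = \mathcal{H}om_{\mathcal O_X}(\mathcal A, \mathcal J)$ is again quasi-coherent; for $\mathcal A$ locally free of infinite rank this sheaf is a product of copies of $\mathcal J$, and over an affine (or quasi-affine) Noetherian $X$ such products remain quasi-coherent. This is precisely why the affineness running throughout the paper lets us drop the finite-rank assumption, and it is the step I expect to require the most care.

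It remains to assemble an actual resolution of a given $\mathcal F$ in $\QCoh^{\mathbb G_m}(\X)$, and here I would follow Riche essentially verbatim. Filter $\mathcal F$ by the brutal truncations $\sigma^{\geq n}\mathcal F$ in the homological grading; these are honest quotient dg-$\mathcal A$-modules because $\mathcal A$ is non-positively graded, so the action can only lower homological degree and $\sigma^{<n}\mathcal F$ is a sub-dg-module. Each truncation is bounded below and is resolved by an $\mathcal A$-K-injective dg-module obtained by applying $C$ to an $\mathcal O_X$-injective resolution, as in the previous step; one then forms the homotopy inverse limit of the resulting tower. A product of K-injectives is K-injective and homotopy limits are built from such products, so the limit $\mathcal I$ is K-injective, while the augmentation $\mathcal F \to \mathcal I$ is a quasi-isomorphism because the truncations recover $\mathcal F$ and stabilize in each fixed homological degree, so that cohomology commutes with the limit and the relevant $\varprojlim^1$ terms vanish (products are exact on an affine Noetherian base). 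None of these steps reintroduces a finiteness condition on the rank of $\mathcal A$, which completes the adaptation.
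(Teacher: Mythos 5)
Your overall architecture is exactly the paper's (which simply invokes \cite[Proposition 2.8]{MR} for bounded-below dg-modules and the inverse-limit argument of \cite[Theorem 1.3.6]{Riche}): coinduce K-injectives from $\mathcal O_X$ to $\mathcal A$, truncate, resolve, take the limit. However, two of your load-bearing steps are wrong as written. First, the truncation: with the paper's conventions the cdga sits in non-positive degrees and the differential has degree $+1$ (e.g.\ $de_i=h_i$ sends degree $-1$ to degree $0$), so $\sigma^{<n}\mathcal F$ is stable under the $\mathcal A$-action but \emph{not} under the differential, and $\sigma^{\ge n}\mathcal F$ is a subcomplex but not a sub-$\mathcal A$-module. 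Neither brutal truncation is a dg-$\mathcal A$-module, so the tower of "honest quotient dg-$\mathcal A$-modules" you want to feed into the bounded-below step does not exist in the form you describe. The correct object, and the one Riche actually uses, is the canonical truncation: $\tau^{\le n}\mathcal F$ (degrees $<n$ together with $\ker d$ in degree $n$) \emph{is} a sub-dg-$\mathcal A$-module because $\mathcal A^{>0}=0$ forces $d(\mathcal A^0)=0$, and the quotients $\tau^{>n}\mathcal F$ are the bounded-below dg-modules whose resolutions you then assemble.

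Second, your resolution of the finite-rank issue rests on a false statement: an infinite product of quasi-coherent sheaves on an affine Noetherian scheme, computed as a sheaf (equivalently, $\mathcal{H}om_{\mathcal O_X}$ out of an infinite-rank free module), is \emph{not} quasi-coherent in general. For instance over $\Spec k[t]$ with $E=E(k)$ the injective hull of the residue field at the origin, one has $\bigl(\prod_n E\bigr)_t\neq\prod_n (E_t)=0$, so $\prod_n\widetilde E$ fails the quasi-coherence test on $D(t)$. Since you flag this as the crux of removing the finite-rank hypothesis, the gap is genuine. Two honest repairs are available: (i) in the paper's setting the semi-free cdga has finitely many algebra generators in each homological degree, so each $\mathcal A^{i}$ is a \emph{finite} free $\mathcal O_X$-module and, for $\mathcal J$ bounded below, each graded piece of $\mathcal{H}om_{\mathcal O_X}(\mathcal A,\mathcal J)$ is a finite product, hence quasi-coherent; or (ii) since $X$ is affine one can work module-theoretically, where the right adjoint $\Hom_T(R,-)$ lands in graded $T$-modules by fiat. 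Your adjunction argument for transporting K-injectivity along the coinduction is fine once the target is known to live in the right category.
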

\begin{proof}
	First we use \cite[Proposition 2.8]{MR} to find a resolution for bounded below $ \mathcal A $-dg-modules; this resolution has flabby graded components. Then, the proof in \cite[Theorem 1.3.6]{Riche} carries over to this setting. As a brief reminder, we truncate the dg-sheaf $ \mathcal F $, resolve the truncations by K-injectives and then take the inverse limit over the truncations. In the proof of \cite[Theorem 1.3.6]{Riche}, we take $ \mathfrak B = \QCoh^{\mathbb G_m} (X)$ and consider equivariant open covers of $ X $.
\end{proof}

Using the above lemma, we can define all right derived functors. In order to define left derived inverse images and tensor products, we use \cite[Lemma 2.7]{MR} which proves the existence of K-flat resolutions. Using these K-injective and K-flat resolutions it is easy to prove the projection formula and the adjunction between pushforwards and pullbacks  by adapting the arguments of \cite{BR} to our setting, and we will use these in this paper.

 \subsection{Perfect objects}\label{sec:perf}
 
 Let us give an ad-hoc definition of the category $ \Perf^{\mathbb G_m} \X $ where $ \X = (X, \mathcal A) $ with the assumptions as before, and $ X $ is quasi-projective. Consider an ample line bundle $ \mathcal L $ on $ X $.
 
 \begin{defin}\label{def:perf}
 	The thick triangulated subcategory of $ D(\QCoh^{\mathbb G_m} \X) $ generated by \hbox{$ \mathcal  L^{\otimes j} \otimes_{\mathcal O_X} \mathcal A (i) $} for all $ i,j \in \mathbb Z $ is called the \textit{category of perfect dg-sheaves}, denoted $ \Perf^{\mathbb G_m} \X $.
 \end{defin}

 \begin{rem}\label{rem:perf}
 	We expect that the `right' definition of the category of perfect dg-sheaves on $ \X = (X,\mathcal A) $, i.e.\ the triangulated category generated by dg-sheaves that are  quasi-isomorphic to a finite locally semi-free $ \mathcal{A} $-module (\ie finite locally free after forgetting the differential) is equivalent to our ad-hoc definition in this context. In particular, it is  well known that if we consider an ordinary scheme (if $ \mathcal{A} = \mathcal O $) equipped with an ample line bundle then the definitions coincide.
 \end{rem}

 \subsection{GIT problem}\label{sec:VGIT}
 
 We start with a $ \mathbb{Z} $-graded semi-free\footnote{Recall that a semi-free cdga $ R $ is one that is free when considered as a commutative algebra by forgetting the differential.} commutative differential graded algebra (cdga), say $ R $. We denote the homological degree zero piece of $ R $ by $ T := R^0 $, and we require that $ T $ is a finitely generated smooth ring over $ k $. We do not require any finiteness conditions on $ R $. To this data, we associate the affine dg-scheme 
 \[
  X := \Spec T, \qquad \X := (X , \mathcal R),
 \] where $ \mathcal R $ is the dg-$ \mathcal O_X $-module associated to the cdga $ R $. The $ \mathbb Z $-grading on $ T \to R $ is equivalent to a $ \mathbb G_m$-action on $ \X $.
 
 We denote the semi-stable loci (which are dg-schemes) as
 \[
	 \X^\pm := (X^\pm, \mathcal R^\pm),
 \] where we define 
 \[
  X^{\pm} := X - V(J^\pm), \qquad \mathcal R^\pm =  \mathcal R|_{X^\pm}.
 \] Here  $ J^\pm $ denotes the ideal generated by all the strictly positive/negative-ly  graded elements. Then we define  two Geometric Invariant Theory (GIT) quotients $ \X \sslash{\pm} $ of $ \X $ with respect to the $ \mathbb{G}_m $-action. The stacks  
 \begin{align*}
  \qquad  X \sslash{\pm} := [X^{\pm} / \mathbb{G}_m],
 \end{align*}with the sheaf of cdgas obtained by  the descent of $ \mathcal R^\pm $ to $ X\sslash{\pm} $ are the  GIT quotient ``dg-stacks" 
\[
\X \sslash{\pm} = (X\sslash{\pm}, \mathcal R^\pm).
\]

\begin{rem}
	We do not define or introduce dg-stacks in general as the ones we consider are global quotients of dg-schemes, and  we are only concerned about the corresponding derived categories. We define the derived category of the global quotient dg-stack merely as the corresponding equivariant derived category of the dg-scheme. 
\end{rem}

\section{Defining Q}\label{sec:3}
We will first recall the $ Q $-construction briefly in the setting of $ \mathbb{Z} $-graded  (not dg-) rings, and then  extend this definition to cdgas. In this section, we continue to use the notations of the previous section. In particular, we remind the reader that  $ T $ is a $ \mathbb{Z} $-graded smooth ring finitely generated over $ k $,  $ R $ is a $ \mathbb Z $-graded  semi-free cdga over $ T $ and $ \X $ is an  affine dg-scheme (associated to $ R $) equipped with a $ \mathbb G_m $-action.
 
 \subsection{$ Q $ for smooth rings}\label{sec:Qa}

This section is a brief reminder of the construction of \cite{bdf}. The $ \mathbb Z $-grading on the ring $ T $ is equivalent to a $ \mathbb G_m $-action on $  X  = \Spec T $. Viewed in this manner,  $ T $ is equipped with the co-projection and co-action morphisms, which we denote by $ \pi $ and $ \sigma $ respectively. These morphisms act as 
\begin{equation}\label{eq:pisigmarings}
\begin{aligned}[c]
\pi :& T \to T[u,u^{-1}]\\
& t \mapsto t
\end{aligned}
\qquad \qquad
\begin{aligned}[c]
\sigma:& T \to T[u,u^{-1}] \\
&t \mapsto t u^{\operatorname{deg} t}  ,
\end{aligned}
\end{equation} for any homogeneous element $ t $ in $ T $. We assign a $ \mathbb Z \times 
\mathbb Z $-grading to the equivariant diagonal of $ \Spec T $, which is
\[
k[\mathbb G_m \times X] = T[u,u^{-1}],
\] such that  the morphisms $ \pi $ and $ \sigma $ equivariant. The  grading for homogeneous $ t $ in $ T $ is defined as $$ \operatorname{deg} \pi(t) = (\operatorname{deg} t, 0 ),\,\,\, \operatorname{deg} \sigma(t) = (0,\operatorname{deg} t ) \,\text{ and } \operatorname{deg} u = (-1,1). $$

 Given the smooth ring $ T $, we define
\[
Q(T) := \langle \pi(T), \sigma(T), u \rangle \subseteq T[u,u^{-1}],
\] 
as the $ k $-subalgebra of $ T[u,u^{-1}] $ generated by the image of the co-action and co-projection maps, and $ u $. It suffices to keep the image of the negative elements under the co-action map along with $ T[u] $,
\[
Q(T) = \langle \bigoplus_{i< 0} T^iu^i, T[u]\rangle
\] where $ T^i $ denotes the $ i $-th graded piece of $ T $.
The  co-projection and co-action  maps factor through $ Q(T) $, giving the maps $ p $ and $ s $,
\begin{equation}\begin{tikzcd} \label{eq:ptst}
T \arrow[r,shift left,"p"] \arrow[r,shift right,swap, "s"]& Q(T)\arrow[r,hook] & \Delta(T).
\end{tikzcd}
\end{equation} 

The $ T \otimes T $-module $ Q(T) $ also inherits the $ \mathbb{Z}\times \mathbb{Z} $ grading from $ T[u,u^{-1}]  $. Hence, we can consider $ Q(T) $ as an element of $ D(\operatorname{mod}^{\mathbb{G}_m \times \mathbb{G}_m}(T \otimes T)) $ using the $ p \otimes s $ module structure.

In order to clarify the construction above, let us consider the  example where $ X $ is an affine space. This example will  play an important role in the paper.

\begin{exa}\label{ex:affineT}
	Let us consider the  ring $ T = k[\mathbf{x}, \mathbf{y}] $, where we are using the shorthand notation $ \mathbf{x} $ to mean $ x_1,\cdots,x_l $, and $ \mathbf y $ to mean $ y_1,\cdots, y_m $. We assign the $ \mathbb Z $-grading
	\[
		\operatorname{deg} x_i = a_i > 0 \qquad \operatorname{deg} y_i = b_i < 0
	\] to $ T $. Then, $$ Q(T) = k[\mathbf{x},\mathbf{z},u], $$ with the $ p,s : T \to Q(T) $ maps given by
	\begin{equation*}
	\begin{aligned}[c]
	p(x_i) &= x_i \\
	p(y_i) &= u^{-b_i} z_i
	\end{aligned} \qquad
	\begin{aligned}[c]
	s(x_i) &= u^{a_i}x_i \\
	s(y_i) &=  z_i
	\end{aligned}
	\end{equation*}	
\end{exa}

\begin{rem}
 For the sake of notational simplicity, we have not added any $ x_i $ or $ y_j $ of internal degree zero. The $ Q $ construction does not affect such degree zero generators and hence we may view it as a part of the Noetherian ring $ k $.
\end{rem}

There is a geometric motivation for this definition of $ Q $, and we refer the reader to \cite{bdf} for more details. The idea is to define $ Q $ as a partial compactification of the $ \mathbb{G}_m $-action on $ X $, generalizing a construction of \cite{drin}. In \textit{loc.\ cit.}, the authors prove various properties of this object $ Q $; one that is worth mentioning here is that the assignment \hbox{$ Q : \kcr \to \operatorname{CR}^{\mathbb{G}_m\times\mathbb{G}_m}_{k[u]} $} is functorial. Here, $ \kcr $ denotes the category of $ \mathbb Z $-graded commutative algebras over $ k $, and $ \operatorname{CR}^{\mathbb{G}_m\times\mathbb{G}_m}_{k[u]} $  denotes the  category of $ \mathbb Z \times \mathbb Z $-graded commutative algebras over $ k[u] $. We also note that the equivariant injective map $ Q(T) \hookrightarrow \Delta(T) $ provides a natural transformation between the Fourier-Mukai functors $ \Phi_{Q(T)} \to  \Phi_{\Delta(T)} =\operatorname{Id}_X $.
\subsection{$ Q $ for semi-free cdgas}\label{sec:Qcdga}

Now we define $ Q $ for semi-free cdgas, which is the case of interest in this paper. Recall that the dg-scheme $ \X = (\Spec T, \mathcal R) $ is equipped with a $ \mathbb G_m $-action. This means that we have the projection and action maps, which we denote by $ \pi $ and $ \sigma $ respectively,
\[
\begin{tikzcd}
 {\mathbb{G}_m} \times  \mathbf X  \arrow[r, shift right, swap, "\pi"] \arrow[r, shift left, "\sigma"] & \mathbf X.
\end{tikzcd}
\] 
We want to view the cdga associated to $ {\mathbb{G}_m} \times  \mathbf X $ as a dg-sheaf on $ \X \times \X $ using the morphism $ \pi \times \sigma $. More precisely, we define a semi-free cdga $ \Delta(R) $ (which should be thought of as the equivariant diagonal in the dg-setting)
\[
\Delta(R) :=  R[u,u^{-1}],
\] which is equipped with a $ \mathbb{Z} \times \mathbb{Z} $-grading (which, as before, is chosen in order to make the action and projection maps equivariant). The  grading for homogeneous $ r $ in $ R $ is defined as 
$$ \operatorname{deg} \pi(r) = (\operatorname{deg} r, 0 ), \,\,\, \operatorname{deg} \sigma(r) = (0,\operatorname{deg} r ) \, \text{ and } \operatorname{deg} u = (-1,1). $$
 In order to get the dg-structure, we note that the elements, $ u $ and $ u^{-1} $ are in homological degree zero, and hence are killed by the differential.

We also have the co-action and co-projection dg-morphisms (which we denote by $ \pi $ and $ \sigma $ by abuse of notation)
\begin{equation}\label{eq:pisigma}
\begin{aligned}[c]
\pi :& R \to R[u,u^{-1}]\\
& r \mapsto r
\end{aligned}
\qquad \qquad
\begin{aligned}[c]
\sigma:& R \to R[u,u^{-1}] \\
&r \mapsto r u^{\operatorname{deg} r}  ,
\end{aligned}
\end{equation} for homogeneous $ r $ in $ R $. 

Consider  $ \Delta(R)  $ as a $ R \otimes R $-module  with the module structure  $  \pi \otimes \sigma $. Then,  we have the associated quasi-coherent dg-sheaf, also denoted $ \Delta(R) $, on $ \X \times \X $. Taking into account the $ \mathbb{Z} \times \mathbb{Z}$-grading we view it as  an element of $ D(\QCoh^{\mathbb G_m \times \mathbb G_m} \mathbf X \times \mathbf X) $. 

\begin{lem}
	The object $ \Delta(R) \in  D(\QCoh^{\mathbb G_m \times \mathbb G_m} \mathbf X \times \mathbf X)  $  is a Fourier-Mukai kernel of the identity functor on $ D(\QCoh^{ \mathbb G_m} \mathbf X ) $.
\end{lem}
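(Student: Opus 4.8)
The plan is to unwind the Fourier--Mukai formula $\Phi_{\Delta(R)}(M) = \pi_{2*}\big(\Delta(R) \otimes^{L} \pi_1^* M\big)$ into a purely algebraic statement and then exhibit an explicit natural isomorphism with the identity. Since $X = \Spec T$ is affine and $\mathbb{G}_m$ is linearly reductive over $k$, I would first record two exactness facts that remove all derived corrections: (i) the map $\pi : R \to R[u,u^{-1}] = \Delta(R)$ is the inclusion $r \mapsto r$, under which $\Delta(R)$ is a free, hence $K$-flat, $R$-module, so the derived tensor $\Delta(R) \otimes^{L}_{R,\pi}(-)$ may be computed without resolving; and (ii) the pushforward $\pi_{2*}$ along the first factor $[\mathbf X/\mathbb{G}_m] \to \mathrm{pt}$ is, for an affine $\mathbb{G}_m$-scheme, just the functor taking the weight-zero part in the first $\mathbb{Z}$-grading (first-$\mathbb{G}_m$-invariants), which is exact. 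Combining (i)--(ii), using the resolutions and base-change/projection formalism recalled in Section~\ref{sec:dg}, shows that $\Phi_{\Delta(R)}$ can be computed on the level of bigraded dg-modules with no hypercohomology.

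Next I would carry out the computation. Using that $\pi_1^* M$ is pulled back along the first factor and that tensoring the kernel against it amounts to using the $\pi$-module structure, the object $\Delta(R) \otimes^{L} \pi_1^* M$ becomes $M[u,u^{-1}] := M \otimes_k k[u,u^{-1}]$, where $M$ carries the first $\mathbb Z$-grading (via $\pi$) and $u$ has bidegree $(-1,1)$, so that a homogeneous element $m u^{b}$ sits in bidegree $(\operatorname{deg} m - b,\, b)$. The remaining second-factor $R$-action is the $\sigma$-action, under which $r \cdot (m u^{b}) = (rm)\,u^{b + \operatorname{deg} r}$. This is the only place where the two module structures $\pi$ and $\sigma$ interact, and keeping them straight is the crux of the bookkeeping.

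Finally I would extract the weight-zero part in the first grading. The condition $\operatorname{deg} m - b = 0$ forces $b = \operatorname{deg} m$, so the first-$\mathbb{G}_m$-invariants are spanned by the elements $m\,u^{\operatorname{deg} m}$, and the assignment $\Psi : m \mapsto m\,u^{\operatorname{deg} m}$ defines a $k$-linear bijection from $M$ onto this weight-zero slice. I would then check that $\Psi$ is an isomorphism of graded $R$-dg-modules: it is grading-preserving for the second grading since $m u^{\operatorname{deg} m}$ has second weight $\operatorname{deg} m$; it is $\sigma$-equivariant because $r \cdot \Psi(m) = (rm)\,u^{\operatorname{deg} r + \operatorname{deg} m} = \Psi(rm)$; and it is a chain map because $u$ lies in homological degree zero and is killed by the differential, while $d$ preserves the internal grading, so $d(\Psi(m)) = (dm)\,u^{\operatorname{deg} m} = \Psi(dm)$. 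Naturality in $M$ is immediate from the formula for $\Psi$, yielding a natural isomorphism $\Phi_{\Delta(R)} \cong \operatorname{Id}$.

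I expect the only real obstacle to be conceptual rather than computational: justifying cleanly that both the tensor and the pushforward are underived in this equivariant dg-setting --- invoking freeness of $\pi$ for $K$-flatness and linear reductivity of $\mathbb{G}_m$ for exactness of the weight-zero functor --- and then tracking the two bigradings carefully enough that the weight-zero slice lands exactly on $m \mapsto m\,u^{\operatorname{deg} m}$ rather than on a shift of it. Once those points are secured, the identification with the identity is forced.
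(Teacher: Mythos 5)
Your proposal is correct and follows essentially the same route as the paper: reduce to the level of bigraded cdga-modules over the affine base, identify $\Delta(R)\otimes_{\pi}M$ with $M[u,u^{-1}]$, and observe that the weight-zero slice in the first grading is $\{m\,u^{\deg m}\}\cong M$ compatibly with the $\sigma$-module structure and the differential. The only differences are cosmetic: you spell out why no derived corrections arise (freeness of $\Delta(R)$ over $\pi$ and exactness of taking a weight component), which the paper leaves implicit, while the paper additionally records the symmetric computation for the transform in the opposite direction.
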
 
\begin{proof} As all the objects are affine, we may work on the level of cdgas. Then, keeping track of the equivariant structures carefully, we have the following sequence of isomorphisms:
	\begin{align*}
	\Phi_{\Delta(R)} (A) &\cong (\sigma_* \pi^*A)^{\mathbb G_m} \\
	&\cong (A[u,u^{-1}])_{(0,*)} \\
	&\cong \oplus A^i u^i,
	\end{align*} where the $ R $-module structure is
	\[
	r.(au^i) = (r.a) u^{i+\operatorname{deg} r}.
	\]

	Similarly, for the inverse functor,
	\begin{align*}
	\Phi_{\Delta(R)} (A) &\cong (\pi_* \sigma^*A)^{\mathbb G_m} \\
	&\cong (A[u,u^{-1}])_{(*,0)} \\
	&\cong \oplus A^iu^{-i},
	\end{align*}where the $ R $-module structure is
	\[
	r.(au^i) = (r.a) u^{i-\operatorname{deg} r}.
	\]
	
	It is clear that $ \oplus A^i u^i $ and $ \oplus A^iu^{-i}  $ are  isomorphic to $ A $ with the original $ R $-module structure, and hence we are done.
\end{proof}

Now, we define an analogue of the $ Q $-construction of \cite{bdf} for the semi-free cdga $ R $.

\begin{defin}
	The object $ Q(R) $ is defined as the following $ \mathbb{Z} \times \mathbb{Z} $-graded cdga
	\[
	Q(R) = \langle \pi(R), \sigma(R), u \rangle \subseteq R[u,u^{-1}]
	\] generated as a $ k $-subalgebra  of $ R[u,u^{-1}] $. The  dg-structure and the $ \mathbb Z \times \mathbb Z $-grading are inherited from the object $ \Delta(R) = R[u,u^{-1}] $.

\end{defin} 

 The  co-action and co-projection maps to $ \Delta(R) $ factor through $ Q(R) $, giving the maps $ p $ and $ s $,
\[\begin{tikzcd}
R \arrow[r,shift left,"p"] \arrow[r,shift right,swap, "s"]& Q(R)\arrow[r,hook] & \Delta(R).
\end{tikzcd}
\] The maps $ p,s $ extend to $ \Delta(R) $ to give the co-projection $ \pi $ and co-action $ \sigma $ maps on $ R $ respectively.

Let us clarify these notions by looking at an extension  of Example~\ref{ex:affineT}.
\begin{exa}\label{ex:3.5}
	Let $ T = k[\mathbf x, \mathbf y] $ as in Example~\ref{ex:affineT}. Consider a homogeneous regular sequence of length $ n $, say $ (h_1,h_2,\cdots, h_{n_1+n_2}) $ in $ T $. Assume further, without loss of generality, that the internal degree of $ h_i $ is $ d_i $, and that 
	\[
		d_i \geq 0 \text{ for } i \in [0,n_1], \qquad d_i < 0 \text{ for } i \in [n_1+1,n_1 + n_2].
	\] Then, we define the dg-algebra $ R $ to be the Koszul resolution on this regular sequence.
	\[
	R = k[\mathbf x, \mathbf y, e_1,\cdots,e_{n_1}, f_1,\cdots,f_{n_2}], \qquad d e_i = h_i, \, \, d f_j = h_{n_1+j}.
	\] Then we can compute $ Q(R) $ to be
	\[
	Q(R) = k[\mathbf x,\mathbf z, e_1,\cdots, e_{n_1}, g_1,\cdots,g_{n_2}],
	\] where  the maps $ p,s : R \to Q(R)  $ are given by
	\begin{equation*}
	\begin{aligned}[c]
	p(x_i) &= x_i \\
	p(y_i) &= u^{-b_i} z_i \\
	p(e_i) &= e_i \\
	p(f_i) &= u^{-d_{n_1+i}} g_i
	\end{aligned} \qquad
	\begin{aligned}[c]
	s(x_i) &= u^{a_i}x_i \\
	s(y_i) &=  z_i \\
	s(e_i) &= u^{d_i}e_i \\
	s(f_i) &=  g_i
	\end{aligned}
	\end{equation*}	
\end{exa}

 Let us return to the general setting now. The maps $ p,s $ give $ Q(R) $ the structure of  a $ \mathbb{Z} \times \mathbb Z $-graded $ R \otimes R $ dg-module with the $ p \otimes s $-module  structure. We will view the associated dg-sheaf, also denoted  $ Q(R) $, as an element of $ D(\QCoh^{\mathbb G_m \times \mathbb G_m} \mathbf X \times \mathbf X) $.
 
 \subsubsection{Explicit description of $ Q(R) $}
	
	Let us introduce some more notation and describe $ Q(R) $ explicitly. As $ R $ is a semi-free cdga over $ T $, let us choose a set of (possibly infinite) homogeneous (in the internal grading) algebra generators. We denote the positive generators by $ e_i $ and the negative generators by $ f_i $, where $ i $ takes values in a possibly countably infinite set. We will use the short hand notation $ \mathbf e $ ($ \mathbf f $) to refer to the set of all $ e_i $ ($ f_i $). Using this notation,
	\[
	R = T[\mathbf e, \mathbf f].
	\] 
	
	Defining $ g_i : = u^{\operatorname{deg} f_i} f_i$, we can express $ Q(R) $ explicitly as 
		\begin{equation}\label{eq:Q}
		Q(R) = Q(T)[\mathbf e, \mathbf g],
		\end{equation}
	with the $ p $ and $ s $ dg-module structures given by
\begin{equation}
\begin{aligned}[c]
p :& R \to Q(R)\\
& t \mapsto p_T(t) \\
& e_i \mapsto e_i \\
& f_i \mapsto u^{-\dg f_i}g_i
\end{aligned}
\qquad \qquad
\begin{aligned}[c]
s:& R \to Q(R) \\
& t\mapsto s_T(t) \\
&e_i \mapsto u^{\dg e_i} e_i \\
& f_i \mapsto g_i,
\end{aligned}
\end{equation} where the maps $ p_T $ and $ s_T $ are the maps defined in equation~\ref{eq:ptst}, but we have added the subscript $ T $ for clarity.

We also note the bi-degrees of the  elements in $ Q(R) $ for convenience:
\[
  \operatorname{bi-deg} e_i = (\dg e_i, 0 ) \qquad \operatorname{bi-deg} g_i = (0, \dg f_i),
\] and the degrees of the elements in $ Q(T) $ are as defined in Section~\ref{sec:Qa}.

	\subsection{Some properties of $ Q(R) $} \label{sec:Qprop}
Let us denote the $ \mathbb{G}_m \times \mathbb G_m $-equivariant inclusion of $ Q(R) $ into $ \Delta(R) $ by $ \eta $,
\[ 
\eta: Q(R) \hookrightarrow \Delta(R) = R[u,u^{-1}].
\] 
Let us study the relation of $ Q(R) $ to $ \Delta(R) $ further. In particular, we show that they become isomorphic if we localize by an element in $ T $ of non-zero internal degree.

\begin{lem}\label{lem:Qdelta}
	Consider $  Q(R) $ as a $ R \otimes R $ dg-module with the $ p \otimes s  $ structure. Let $ t $ in $ T $ be a homogeneous element, with the corresponding localization map $ R_t = T_t[\mathbf e, \mathbf f] \to R = T[\mathbf e, \mathbf f] $. If $ \deg{t} > 0  $, 
	\[
	1 \otimes_s \eta :   R_t \otimes_s Q(R) \to  R_t \otimes_s \Delta(R)
	\] is an isomorphism. If $ \deg{t} < 0  $, 
	\[
	1 \otimes_p \eta : R_t  \otimes_pQ(R) \to  R_t \otimes_p \Delta(R)
	\] is an isomorphism.
\end{lem}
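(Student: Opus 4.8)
The plan is to trivialize both localizations by showing that each one forces the variable $u$ to become invertible, after which $Q$ and $\Delta$ literally coincide. Since $t\in T$, the elements $p(t)=\pi(t)=t$ and $s(t)=\sigma(t)=tu^{\deg t}$ lie in $Q(T)\subseteq Q(R)$, and by the presentation $Q(R)=Q(T)[\mathbf e,\mathbf g]$ of~\eqref{eq:Q} the algebra $Q(R)$ is free over $Q(T)$ on the generators $\mathbf e,\mathbf g$. Hence localizing $Q(R)$ at $p(t)$ or $s(t)$ commutes with this free extension, and it is enough to understand the localization of $Q(T)$. Writing the two base changes as localizations, $R_t\otimes_s Q(R)=Q(R)[1/s(t)]$ and $R_t\otimes_s\Delta(R)=\Delta(R)[1/\sigma(t)]$ (and similarly on the $p$-side, using that $s$ and $p$ extend to $\sigma$ and $\pi$ on $\Delta(R)$), reduces everything to a computation inside $\Delta(T)=T[u,u^{-1}]$.

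The key point is a factorization valid for any homogeneous $t$, read off inside $\Delta(T)$. If $\deg t>0$, then $s(t)=p(t)\,u^{\deg t}$ is a product of elements of $Q(T)$ in which $u$ appears to a strictly positive power; inverting $s(t)$ inverts this product, and since in a commutative ring a product is a unit only when each factor is, $u^{\deg t}$ and therefore $u$ become units. Symmetrically, if $\deg t<0$, then $p(t)=s(t)\,u^{-\deg t}$ with $-\deg t>0$, so inverting $p(t)$ again makes $u$ a unit. This is precisely the place where the hypothesis $\deg t\neq 0$ is essential, and it is what dictates pairing $\deg t>0$ with the $s$-structure and $\deg t<0$ with the $p$-structure.

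Once $u$ is inverted I would finish as follows. Because $T[u]\subseteq Q(T)\subseteq T[u,u^{-1}]$, inverting $u$ gives $Q(T)[u^{-1}]=T[u,u^{-1}]=\Delta(T)$; inverting the remaining factor $t$ then identifies $Q(T)[1/s(t)]$ (resp.\ $Q(T)[1/p(t)]$) with $T_t[u,u^{-1}]$. Extending freely by $\mathbf e,\mathbf g$ and using $g_i=u^{\deg f_i}f_i$ to trade the $\mathbf g$ for the $\mathbf f$ once $u$ is invertible, both $Q(R)[1/s(t)]$ and $\Delta(R)[1/\sigma(t)]$ are identified with $R_t[u,u^{-1}]$, and under these identifications the localized map $1\otimes_s\eta$ becomes the identity; the $p$-side is identical. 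Since $t\in T=R^0$ is a homological-degree-zero cocycle, localization at $t$ is flat and exact and respects the differential inherited from $\Delta(R)$, so this graded-algebra identification is a chain isomorphism, i.e.\ an isomorphism of dg-modules (in particular a quasi-isomorphism).

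I expect the only genuine subtlety to be checking $u$-invertibility for an \emph{arbitrary} graded smooth $T$ rather than just affine space; this is handled uniformly by the two factorizations above, which hold for any homogeneous $t$ and do not rely on a monomial basis. A secondary technical point is that $R=T[\mathbf e,\mathbf f]$ may have infinitely many generators, but this causes no difficulty, since localization is a filtered colimit and therefore commutes with the free extension by $\mathbf e,\mathbf f$ and $\mathbf e,\mathbf g$.
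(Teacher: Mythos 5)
Your proof is correct, and it rests on the same key observation as the paper's: after localizing at a homogeneous $t$ of nonzero degree (with the appropriate module structure), $u$ becomes invertible, whence $Q$ and $\Delta$ coincide. The paper's proof is terser and organized differently: injectivity is deduced from flatness of $R_t$ over $R$, and surjectivity is established by exhibiting a single explicit preimage of $u^{-1}$ (namely $t^{-1}\otimes u^{\deg t-1}t$ on the $s$-side and $t^{-1}\otimes s(t)u^{-\deg t-1}$ on the $p$-side) — these elements are exactly the inverses of $u$ that your factorizations $s(t)=p(t)\,u^{\deg t}$ and $p(t)=s(t)\,u^{-\deg t}$ produce. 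Your version goes a step further and identifies both localizations outright with $R_t[u,u^{-1}]$ via $Q(T)\subseteq T[u,u^{-1}]$ and the trade $f_i=u^{-\deg f_i}g_i$, which handles injectivity and surjectivity simultaneously and makes the flatness appeal unnecessary; the cost is a bit more bookkeeping with the generators $\mathbf e,\mathbf g$, which you dispatch correctly (including the infinitely-generated case via filtered colimits). Both arguments are complete and valid.
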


\begin{proof} The morphisms are injective as $ R_t $ is flat over $ R $.
	
	To check that the morphisms are surjective, we just need to check that we get $ u^{-1} $ in the image. In the first case, $  t^{-1}  \otimes u^{\deg t -1} t $ maps to $ u^{-1 }$; in the second case, $   t^{-1} \otimes s(t) u^{-\deg t-1}  $ maps to $ u^{-1} $.

\end{proof}

We need to study some properties of the object $ Q(R) \sop Q(R) $ here as it will play a role when we discuss fully faithfulness of the window functor in the next subsection. The object $ Q(R) \sop Q(R)  $ inherits a $ \mathbb{G}_m^{\times 3} $-action, where the $ \mathbb{Z}^{ 3} $ grading is as follows:
\[
\operatorname{deg} q\otimes1 = (a,b,0) \qquad  \operatorname{deg} 1\otimes q = (0,a,b) 
\] if $ q \in Q(R) $ is a homogeneous element of degree $ (a,b) $. 

We would like to understand what happens when we take middle degree invariants; we denote this by $ ( M )_{0} $ where $ M $ is $ \mathbb{Z}^{ 3} $-graded.

\begin{lem}\label{lem:QQ0}
	The following diagram commutes
	\[\begin{tikzcd}
	& (Q(R) \; {}_s\!\otimes_\pi \Delta)_0 \arrow[rd, "\sim"] & \\
	(Q(R) \sop Q(R))_0 \arrow[ru,] \arrow[rd,]& & Q(R) \\
	& (\Delta \; {}_\sigma \otimes_p Q(R))_0 \arrow[ru,swap,"\sim"]
	\end{tikzcd}	\]
\end{lem}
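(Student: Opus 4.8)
The plan is to reduce everything to an explicit computation with graded modules and then chase a bihomogeneous element around the diagram. Since $\X$ is affine, I work throughout at the level of the $\mathbb{Z}^3$-graded modules underlying these dg-sheaves, manipulating honest bihomogeneous elements; as $Q(R)$ is semi-free over $R$ (via both $p$ and $s$), the tensor products involved are computed by the explicit cdga models. First I would record the two isomorphisms on the right of the diagram. Writing $\Delta = R[u,u^{-1}]$, the module $\Delta$ is free over $R$ on the basis $\{u^i\}_{i\in\mathbb Z}$ both for the $\pi$-action (since $\pi(r)=r$) and for the $\sigma$-action (since $\sigma(r)=ru^{\deg r}$). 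This is the module incarnation of the fact, established in the lemma identifying $\Phi_{\Delta(R)}$ with the identity, that $\Delta$ is the identity kernel. Extracting the middle-degree-zero strand then produces explicit isomorphisms
\[
\alpha \colon (Q(R)\;{}_s\!\otimes_\pi \Delta)_0 \xrightarrow{\ \sim\ } Q(R), \qquad \beta \colon (\Delta\;{}_\sigma\!\otimes_p Q(R))_0 \xrightarrow{\ \sim\ } Q(R),
\]
determined by $\alpha(q\otimes u^{i})=q$ and $\beta(u^{i}\otimes q)=q$ on the unique strand where the middle grading vanishes.

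Next I would identify the two maps emanating from $(Q(R)\sop Q(R))_0$. Because $\eta\circ p=\pi$ and $\eta\circ s=\sigma$, the inclusion $\eta$ respects the balancing over $R$, so $\mathrm{id}\otimes\eta$ and $\eta\otimes\mathrm{id}$ are well defined; and since $\eta$ preserves the bidegree, both carry the middle-degree-zero part to the middle-degree-zero part. The key bookkeeping device is the following: for homogeneous $q\in Q(R)$ of bidegree $(a,b)$ one has $\eta(q)=\rho_q\,u^{b}$ for a unique $\rho_q\in R$ of internal degree $a+b$, namely $\rho_q=q|_{u=1}$; equivalently $\eta(q)=\pi(\rho_q)u^{b}=\sigma(\rho_q)u^{-a}$. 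In particular $\rho_{p(r)}=\rho_{s(r)}=r$.

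Finally I would chase a bihomogeneous generator $q_1\otimes q_2$, where $q_i$ has bidegree $(a_i,b_i)$ and the middle-degree-zero condition reads $b_1+a_2=0$. Along the top, $\mathrm{id}\otimes\eta$ sends it to $q_1\otimes\rho_{q_2}u^{b_2}$; since $\rho_{q_2}$ acts via $\pi$ on $\Delta$, the balancing rewrites this as $s(\rho_{q_2})q_1\otimes u^{b_2}$, and $\alpha$ then yields $s(\rho_{q_2})\,q_1$. Symmetrically, rewriting $\eta(q_1)=\sigma(\rho_{q_1})u^{-a_1}$ and moving $\sigma(\rho_{q_1})$ across the balancing shows that $\eta\otimes\mathrm{id}$ followed by $\beta$ yields $p(\rho_{q_1})\,q_2$. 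It then remains to check $s(\rho_{q_2})\,q_1=p(\rho_{q_1})\,q_2$ in $Q(R)$; applying the injection $\eta$ gives
\[
\eta\big(s(\rho_{q_2})q_1\big)=\sigma(\rho_{q_2})\,\eta(q_1)=\rho_{q_1}\rho_{q_2}\,u^{a_2+b_2+b_1},\qquad \eta\big(p(\rho_{q_1})q_2\big)=\pi(\rho_{q_1})\,\eta(q_2)=\rho_{q_1}\rho_{q_2}\,u^{b_2},
\]
and these agree precisely because $b_1+a_2=0$. Since $\eta$ is injective, the two images in $Q(R)$ coincide, so the diagram commutes. I expect the only genuine obstacle to be the grading bookkeeping: keeping the three $\mathbb Z$-gradings and the powers of $u$ straight, and confirming that each structure map respects both the balancing over $R$ and the middle-degree-zero truncation. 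Once that is set up, the concluding identity is a one-line consequence of the injectivity of $\eta$ together with the middle-degree-zero condition.
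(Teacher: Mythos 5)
Your proposal is correct and follows essentially the same route as the paper: both identify the two right-hand isomorphisms by exploiting that $\Delta=R[u,u^{-1}]$ is free over $R$ (via $\pi$, resp.\ $\sigma$) on the powers of $u$ and then extracting the middle-degree-zero strand using $\deg u=(-1,1,0)$, $\deg v=(0,-1,1)$. The only difference is that the paper declares the commutativity ``clear'' after computing the two sides, whereas you carry out the element chase explicitly via $\eta(q)=\pi(\rho_q)u^{b}=\sigma(\rho_q)u^{-a}$ and the injectivity of $\eta$ --- a worthwhile verification, and your bookkeeping checks out.
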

\begin{proof}
	Consider the upper right arrow
	\begin{align*}
	(Q(R) \; {}_s\!\otimes_\pi \Delta)_0 &= Q(R)[v,v^{-1}]_0 \\
	&\cong (\langle \bigoplus_{i\leq 0} R_i u^i, R[u] \rangle[v,v^{-1}])_0 \\
	&\cong \langle \bigoplus_{i\leq 0} R_i u^iv^i , R[uv] \rangle \\
	&\cong Q(R).
	\end{align*}
	
In order to take the middle degree zero invariants, we used that $ \operatorname{deg} u = (-1,1,0) $ and $ \operatorname{deg} v = (0,-1,1) $. Similarly, one can prove the same for the bottom row, and the commutativity is clear.
\end{proof}

We consider the following  morphism,
\begin{equation}\label{eq:rho}
\rho : (Q(R) \sopl Q(R))_0 \to (Q(R) \sop Q(R))_0 \to Q(R)
\end{equation}
 where the first morphism is the map from the left derived functor to the (underived) functor, and the second morphism is the one constructed in Lemma~\ref{lem:Qdelta}.
Fully faithfulness of the window functor is related to properties of the morphism $ \rho $ as we will see in Section~\ref{sec:ff}.

Our goal is to construct a Fourier-Mukai kernel for the window functors 
\[
D(\operatorname{QCoh}^{\mathbb{G}_m}\X^+) \longrightarrow D\left(\operatorname{QCoh}^{\mathbb{G}_m} \X \right).
\] Using the $ \mathbb{G}_m $-equivariant morphism of  dg-schemes
\[
j : \X^+ \to \X,
\]
 we define
\[
Q_+ :=  (j\times\operatorname{Id})^*Q(R),
\] and consider it as an object of $ D(\operatorname{Qcoh}^{\mathbb{G}_m \times \mathbb{G}_m}\X^+ \times \X)  $. We have dropped the $ R $ in $ Q_+ $ in the interest of notational convenience. We will focus on the positive GIT quotient, but the arguments are analogous for the negative GIT quotient.

\subsection{Fully faithfulness}\label{sec:ff}
In this section, we find sufficient conditions for  the Fourier-Mukai functor $ \Phi_{Q_{+}} $ to be fully faithful. Checking faithfulness is easy and follows the arguments of \cite{bdf}.

\begin{lem} \label{lem:faithfulness}
	The composition $ j^* \circ \Phi_{Q_+}$ is naturally isomorphic to the identity.  In particular, the functor 
	\[
	\Phi_{Q_+} :  D(\operatorname{Qcoh}^{\mathbb{G}_m} \X^+) \longrightarrow D\left(\operatorname{QCoh}^{\mathbb{G}_m} \X \right),
	\]
	is faithful.
\end{lem}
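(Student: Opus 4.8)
I want to show that the composition $j^* \circ \Phi_{Q_+}$ is naturally isomorphic to the identity functor on $D(\operatorname{QCoh}^{\mathbb{G}_m}\X^+)$, since faithfulness of $\Phi_{Q_+}$ follows formally once we exhibit such a retraction. The strategy is to unwind both functors as explicit operations on cdga-modules, working entirely affine-locally (or rather, affine-globally, since everything in sight is affine or quasi-affine). The key input is the relationship between $Q(R)$ and $\Delta(R)$ established in Lemma~\ref{lem:Qdelta}: they agree after localizing at a homogeneous element of the appropriate sign. Since $\X^+ = X - V(J^+)$ is covered by the localizations $X_t$ for $t \in T$ homogeneous of strictly positive degree, pulling back along $j$ should replace $Q(R)$ by $\Delta(R)$, and $\Phi_{\Delta(R)}$ is the identity by the lemma preceding the $Q$-construction.

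**Key steps, in order.** First I would write $Q_+ = (j\times\operatorname{Id})^*Q(R)$ and compute $j^* \circ \Phi_{Q_+}$ as a Fourier-Mukai-type composition. Using functoriality of pullback and the base-change/projection-formula machinery recorded in Section~\ref{sec:dg}, I would reduce $j^* \circ \Phi_{Q_+}$ to the Fourier-Mukai functor on $\X^+$ whose kernel is $(j\times j)^* Q(R)$, i.e.\ the restriction of $Q(R)$ to $\X^+ \times \X^+$. Second, I would invoke Lemma~\ref{lem:Qdelta}: on the semi-stable locus $X^+$, every point has a neighbourhood $X_t$ with $\deg t > 0$, and there the map $1 \otimes_s \eta \colon R_t \otimes_s Q(R) \to R_t \otimes_s \Delta(R)$ is an isomorphism. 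This shows that the restricted kernel $(j\times j)^*Q(R)$ is isomorphic to $(j\times j)^*\Delta(R)$, the diagonal kernel on $\X^+$. Third, I would apply the lemma stating that $\Delta(R)$ is a Fourier-Mukai kernel for the identity on $D(\operatorname{QCoh}^{\mathbb{G}_m}\X)$; the same computation (tracking the $\mathbb{Z}\times\mathbb{Z}$-grading and taking middle-degree invariants, yielding $\bigoplus_i A^i u^i \cong A$) applies verbatim on $\X^+$, so the composition is naturally isomorphic to $\operatorname{Id}_{D(\operatorname{QCoh}^{\mathbb{G}_m}\X^+)}$. Finally, faithfulness is immediate: if $\Phi_{Q_+}(f) = 0$ then $f = (j^*\circ\Phi_{Q_+})(f) = 0$, so $\Phi_{Q_+}$ is faithful on morphisms.

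**The main obstacle.** The delicate point is verifying that $j^*$ of the Fourier-Mukai transform genuinely recovers the Fourier-Mukai transform with the restricted kernel, i.e.\ that pullback commutes appropriately with the pushforward $\sigma_*$ (or its invariant-taking incarnation) and the tensor product defining $\Phi_{Q_+}$. This is where one must be careful that the base-change formula of \cite{BR, MR} applies in the equivariant dg-setting, and in particular that no flatness failure obstructs it — precisely the situation the excerpt flags, where one uses the \emph{derived} fiber product rather than requiring flatness. Concretely, I expect the subtlety to lie in matching $j^* \sigma_* (\pi^*(-) \otimes^L Q(R))$ with $\sigma_*^+ (\pi^{+*}(-) \otimes^L (j\times j)^*Q(R))$, since $j$ is an open immersion (hence flat) but one still has to confirm the grading conventions and the middle-degree-invariants step are preserved under restriction. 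Once that compatibility is in hand, the rest follows from assembling Lemma~\ref{lem:Qdelta} with the $\Delta(R)$-is-the-identity lemma.
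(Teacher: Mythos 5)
Your proposal is correct and follows essentially the same route as the paper: reduce $j^*\circ\Phi_{Q_+}$ to the Fourier--Mukai functor with kernel $(j\times j)^*Q(R)$, identify that kernel with $(j\times j)^*\Delta(R)$ by applying the first part of Lemma~\ref{lem:Qdelta} on the $\mathbb{G}_m$-invariant affine cover of $X^+\times X^+$ obtained by inverting positive elements of $J^+$, and conclude via the lemma that $\Delta(R)$ is a kernel for the identity. The paper's proof is just a terser version of the same argument, leaving the kernel-level reduction and the base-change compatibility you flag as implicit.
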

\begin{proof}
	We want to show that $ j^* \circ \Phi_{Q_+} $ is the identity functor. On the level of the kernels, it suffices to show that the morphism
	\[	
		(j \times j)^*Q \to (j \times j)^* \Delta(R)
	\] is an isomorphism. We can do this locally on the $ \mathbb G_m $-invariant affine cover of $ X^+ \times X^+ $, obtained by inverting the positive elements $ t \in J^+ $ in $ T = k[X] $. This is precisely the content of the first part of Lemma~\ref{lem:Qdelta}.
\end{proof}

Fullness of the functor is more involved and is best phrased in the language of Bousfield localizations. Let us recall some of the definitions and results that we need about Bousfield (co)-localizations.  The existence of a Bousfield triangle produces a semi-orthogonal decomposition, and we show that the essential image of our functor is equivalent to one part of the semi orthogonal decomposition.

\begin{defin}
	Let $\mathcal{T}$ be a triangulated category. A \emph{Bousfield localization} is an exact endofunctor $L:\mathcal{T}\ra\mathcal{T}$ equipped with a natural transformation $\delta:1_{\mathcal{T}}\ra L$ such that:
	\begin{enumerate}
		\item[a)] $L\delta=\delta L$ and
		\item[b)] $L\delta:L\ra L^2$ is invertible.
	\end{enumerate}
	A \emph{Bousfield co-localization} is given by an endofunctor $C:\mathcal{T}\ra\mathcal{T}$ equipped with a natural transformation $\epsilon:C\ra 1_{\mathcal{T}}$ such that:
	\begin{enumerate}
		\item[a)] $C\epsilon=\epsilon C$ and 
		\item[b)] $C\epsilon:C^2\ra C$ is invertible.
	\end{enumerate}
\end{defin}

\begin{defin}\label{triangle}
	Assume there are natural transformations of endofunctors
	\begin{equation*}
	C\overset{\epsilon}\ra1_{\mathcal{T}}\overset{\delta}\ra L
	\end{equation*}
	of a triangulated category $\mathcal{T}$ such that
	\begin{equation*}
	Cx\overset{\epsilon_{Cx}}\longrightarrow x\overset{\delta_x}\longrightarrow Lx
	\end{equation*}
	is an exact triangle for any object $x$ of $\mathcal{T}$. Then we refer to $C\ra1_{\mathcal{T}}\ra L$ as a \emph{Bousfield triangle} for $\mathcal{T}$ when any of the following equivalent conditions are satisfied:
	\begin{enumerate}
		\item[1)] $L$ is a Bousfield localization and $C(\epsilon_x)=\epsilon_{C_x}$
		\item[2)] $C$ is a Bousfield co-localization and $L(\delta_x)=\delta_{L_x}$
		\item[3)] $L$ is a Bousfield localization and $C$ is a Bousfield co-localization.
	\end{enumerate}
\end{defin}

\noindent
For a proof that the above properties are indeed equivalent, we refer the reader to \cite[Definition 3.33]{bdf}.

\begin{rem}\label{rem:FMa}
	Any Fourier-Mukai functor $ \Phi_P $ with a morphism $ \Delta \to P $ satisfies the condition a) to be a Bousfield localization. It is easy to see that $ \Phi_P(\delta_A) = \delta_{\Phi_P(A)} $. Analogously, any Fourier-Mukai functor $ \Phi_{P'} $ with a morphism $  P' \to \Delta $ satisfies the condition a) to be a Bousfield co-localization. 
\end{rem}

\begin{lem}\label{lem:BT1}[Property P]
	The triangle of functors 
	\[\begin{tikzcd}
	\Phi_{Q(R)} \arrow[r,] & \operatorname{Id} \arrow[r,]& \Phi_{\operatorname{cone}(\eta)}
	\end{tikzcd}
	\] is a Bousfield triangle if the morphism 
	\[
	\rho : (Q(R) \sopl Q(R))_0 \to Q(R)  ,
	\] is an isomorphism.
\end{lem}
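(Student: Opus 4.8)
The plan is to verify the Bousfield triangle conditions via part (2) of Definition~\ref{triangle}: I will show that $C := \Phi_{Q(R)}$ is a Bousfield co-localization and that $L := \Phi_{\operatorname{cone}(\eta)}$ satisfies $L(\delta_x) = \delta_{Lx}$. First I would fix the natural transformations: the kernel morphism $\eta \colon Q(R) \to \Delta(R)$ induces $\epsilon \colon C \to \operatorname{Id}$ (using that $\Delta(R)$ is a kernel for the identity), and $\Delta(R) \to \operatorname{cone}(\eta)$ induces $\delta \colon \operatorname{Id} \to L$. Since the Fourier--Mukai construction is exact and the kernels sit in the exact triangle $Q(R) \to \Delta(R) \to \operatorname{cone}(\eta)$, evaluating at any object $x$ yields the exact triangle $Cx \to x \to Lx$ required as the premise of Definition~\ref{triangle}. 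Then Remark~\ref{rem:FMa} supplies two of the needed conditions for free: the morphism $Q(R) \to \Delta$ gives condition a) for a co-localization, $C\epsilon = \epsilon C$, and the morphism $\Delta \to \operatorname{cone}(\eta)$ gives $L(\delta_x) = \delta_{Lx}$.

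The only remaining condition is b) for the co-localization, namely that $C\epsilon \colon C^2 \to C$ is invertible, and the heart of the argument is to identify this natural transformation at the level of kernels with the morphism $\rho$ of~\eqref{eq:rho}. Since composition of Fourier--Mukai functors is computed by convolution of kernels, which in this affine $\mathbb{G}_m$-equivariant dg-setting is the derived tensor product followed by taking middle-degree invariants, the kernel of $C^2$ is precisely $(Q(R) \sopl Q(R))_0$. Whiskering $\epsilon$ on the left by $C$ corresponds to applying $\eta$ to the factor carrying the $p$-structure, i.e. to the composite
\[
(Q(R) \sopl Q(R))_0 \xrightarrow{\,1 \sopl \eta\,} (Q(R) \sopl \Delta(R))_0 \cong Q(R),
\]
where the final isomorphism is Lemma~\ref{lem:QQ0}. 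Because $\Delta(R) = R[u,u^{-1}]$ is flat over $R$, the derived and underived tensor products agree on the target; naturality of the comparison map $\sopl \to \sop$ then shows that this composite coincides with the two-step morphism $\rho$ of~\eqref{eq:rho}, whose first arrow is that very comparison map and whose second arrow is the map $(Q(R) \sop Q(R))_0 \to Q(R)$ of Lemma~\ref{lem:QQ0}. Hence $C\epsilon$ is exactly the Fourier--Mukai natural transformation induced by $\rho$.

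With this identification, the hypothesis that $\rho$ is an isomorphism in the derived category of kernels immediately gives that $C\epsilon \colon C^2 \to C$ is an isomorphism of functors, so $C$ is a Bousfield co-localization. Together with $L(\delta_x) = \delta_{Lx}$ from the first paragraph, condition (2) of Definition~\ref{triangle} is satisfied and the triangle is a Bousfield triangle. I expect the main obstacle to be precisely the kernel-level identification of the second paragraph: one must rigorously match the whiskered transformation $C\epsilon$ with $\rho$, which requires that composition of the derived equivariant Fourier--Mukai functors really is the convolution $(-\sopl-)_0$, and that the isomorphism $(Q(R)\sopl\Delta(R))_0 \cong Q(R)$ is the one furnished by Lemma~\ref{lem:QQ0} and is compatible with $\eta$. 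The derived/underived distinction is essential here---the convolution is intrinsically derived, which is exactly why $\rho$ begins with the comparison map $\sopl \to \sop$---and the careful bookkeeping of the three middle gradings and their degree-zero invariants (as in Lemma~\ref{lem:QQ0}) is where the technical work concentrates.
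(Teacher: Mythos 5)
Your proof is correct and takes essentially the same approach as the paper: the paper likewise uses Remark~\ref{rem:FMa} to dispose of condition a) and of $L(\delta_x)=\delta_{Lx}$, and reduces everything to the invertibility of $C\epsilon$, which it identifies with $\rho$ by citing the argument of \cite[Lemma 3.3.6]{bdf} rather than spelling out the convolution-of-kernels bookkeeping as you do. The extra detail you supply (that the kernel of $C^2$ is $(Q(R)\sopl Q(R))_0$ and that the whiskered counit matches $\rho$ via Lemma~\ref{lem:QQ0} and flatness of $\Delta(R)$ over $R$) is exactly what that citation is standing in for.
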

\begin{proof}
	The proof follows the arguments of the proof of \cite[Lemma 3.3.6]{bdf}.
	
	We have a morphism $ Q(R) \to \Delta(R) $, and a morphism $ \Delta \to \operatorname{cone}(\eta)(R) $. Hence (using Remark~\ref{rem:FMa}), we only need to check the second condition for $ \Phi_{Q(R)} $ to be Bousfield co-localization. This condition translates to 
	\[
	(Q(R) \sopl Q(R))_0 \cong Q(R).
	\] 
\end{proof}
If $ Q(R) $ satisfies the  condition of Lemma~\ref{lem:BT1}, we will say that $ Q(R) $ satisfies \hbox{\textit{Property $ P $}}. 

We  have another Bousfield triangle given as follows.

\begin{lem}\label{lem:BT2}
	The following triangle is a Bousfield triangle
	\[ \begin{tikzcd}
	 \Phi_{\operatorname{cone(\gamma)[-1]}}  \arrow[r,] & \operatorname{Id} \arrow[r, ] & j_* \circ j^* ,
	\end{tikzcd}
	\]  where $ \gamma $ is the  morphism $ \Delta \to (\operatorname{Id} \times j)_*(\operatorname{Id} \times j)^*\Delta $.
\end{lem}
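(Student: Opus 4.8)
The plan is to apply Definition~\ref{triangle} with $L := j_* \circ j^*$ and $C := \Phi_{\operatorname{cone}(\gamma)[-1]}$, and to verify its condition (1): that $L$ is a Bousfield localization and $C(\epsilon_x)=\epsilon_{Cx}$.

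First I would realize $j_* \circ j^*$ as a Fourier-Mukai functor. Writing
\[
P := (\operatorname{Id}\times j)_*(\operatorname{Id}\times j)^*\Delta,
\]
the projection formula together with the identities $\pi_2\circ(\operatorname{Id}\times j)=j\circ\pi_2'$ and $\pi_1\circ(\operatorname{Id}\times j)=\pi_1'$ (where $\pi_i$, $\pi_i'$ are the projections from $\X\times\X$ and $\X\times\X^+$) yields $\Phi_P(F)\cong j_*\,\Phi_{(\operatorname{Id}\times j)^*\Delta}(F)\cong j_* j^* F$, since $(\operatorname{Id}\times j)^*\Delta$ is the kernel of $j^*$; this uses the base-change and projection formulas of \cite{BR,MR}. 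The morphism $\gamma$ is the unit of the adjunction $((\operatorname{Id}\times j)^*,(\operatorname{Id}\times j)_*)$ evaluated at $\Delta$, so by compatibility of convolution with adjunction units, $\Phi_\gamma$ is the unit $\delta:\operatorname{Id}\to j_* j^*$ of $(j^*,j_*)$. Applying the exact assignment $P'\mapsto\Phi_{P'}$ to the triangle $\operatorname{cone}(\gamma)[-1]\to\Delta\xrightarrow{\gamma}P$ then produces the exact triangle of endofunctors
\[
C\xrightarrow{\epsilon}\operatorname{Id}\xrightarrow{\delta}L,
\]
which supplies the exact-triangle hypothesis of Definition~\ref{triangle}.

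Next I would extract the formal compatibilities from Remark~\ref{rem:FMa}. Since $L=\Phi_P$ carries the morphism $\gamma:\Delta\to P$, it satisfies condition a) for a Bousfield localization and moreover $L(\delta_x)=\delta_{Lx}$; since $C=\Phi_{\operatorname{cone}(\gamma)[-1]}$ carries the morphism $\operatorname{cone}(\gamma)[-1]\to\Delta$, the analogous (equally elementary) statement gives $C\epsilon=\epsilon C$ and $C(\epsilon_x)=\epsilon_{Cx}$. Thus the only remaining point is condition b) for $L$, namely that $L\delta:L\to L^2$ is an isomorphism.

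This is the crux of the argument, and the one genuinely geometric input. Because $j:\X^+\to\X$ is the open immersion of the semistable locus $X^+=X-V(J^+)$, the counit $j^* j_*\to\operatorname{Id}$ on $\X^+$ is an isomorphism; I expect this to be the main point requiring care in the dg-equivariant setting, where it follows from openness of $X^+\subseteq X$ together with the base-change formula of \cite{BR,MR}. Granting $j^* j_*\cong\operatorname{Id}$, we obtain $L^2=j_*(j^* j_*)j^*\cong j_* j^*=L$, and the natural transformation realizing this isomorphism is precisely $L\delta$ (as $\delta$ is the adjunction unit), so $L\delta$ is invertible and $L$ is a Bousfield localization. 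Combined with the exact triangle and the identity $C(\epsilon_x)=\epsilon_{Cx}$, condition (1) of Definition~\ref{triangle} is satisfied, and therefore $C\to\operatorname{Id}\to L$ is a Bousfield triangle.
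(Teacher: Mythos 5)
Your proposal is correct and follows essentially the same route as the paper: realize $j_*\circ j^*$ as the Fourier--Mukai functor with kernel $(\operatorname{Id}\times j)_*(\operatorname{Id}\times j)^*\Delta$, use base change for the Cartesian square built from the open immersion $j$ to see that the counit $j^*j_*\to\operatorname{Id}$ is invertible (hence $L\delta\colon L\to L^2$ is an isomorphism and $L$ is a Bousfield localization), and then take the cone of the unit at the level of kernels so that the formal compatibilities of Remark~\ref{rem:FMa} supply the remaining condition of Definition~\ref{triangle}. Your write-up is somewhat more explicit than the paper's about which of the equivalent conditions in Definition~\ref{triangle} is being checked, but there is no substantive difference in the argument.
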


\begin{proof}
	First, we check that $ j_*j^* $ is a Bousfield localization. We have the pair of adjoint functors $ j^* \dashv j_* $ . We apply  base change to the following Cartesian square
	\[\begin{tikzcd}
	\X^+ \arrow[r,"\operatorname{Id}"] \arrow[d,"\operatorname{Id}"] & \X^+ \arrow[d,"j"]\\
	\X^+ \arrow[r,"j"] & \X,
	\end{tikzcd}
	\] to see that the co-unit map
	\[
	j^*j_* \overset{\sim}{\to} \operatorname{Id},
	\] is an isomorphism.
	Now, we can show that the functor $ j_*j^* $ equipped with the unit  $ \operatorname{Id} \to  j_*j^* $ is a Bousfield localization (using base change for example). This is similar to the arguments of \cite[Example 1.2]{HR17}.

Finally, we take the cone of the unit natural transformation to get the desired Bousfield triangle
	\[
	\Phi_{\operatorname{cone(\gamma)[-1]}}  \to \operatorname{Id} \to j_* \circ j^*.
	\] Notice that we may do this on the level of the Fourier-Mukai kernels as we have explicit kernels.
\end{proof}

For convenience, we define $ C := \Phi_{\operatorname{cone(\gamma)[-1]}} $.

\begin{lem}\label{lem:FF} If $ Q(R) $ satisfies property $ P $, there is a weak semi-orthogonal decomposition 
	\begin{equation*}
	D(\operatorname{QCoh}^{\mathbb{G}_m}\X)=\left\langle  \operatorname{Im}\Phi_{\operatorname{cone}(\eta)},  \ \operatorname{Im}\Phi_{Q_{+}},  \operatorname{Im}\left(\Phi_{Q(R)}\circ C\right)\right\rangle,
	\end{equation*}
	where $  \operatorname{Im}$ denotes the essential image. Furthermore, the functor 
	\[
	\Phi_{Q_+} : D(\operatorname{Qcoh}^{\mathbb{G}_m}\X^+) \to D\left(\operatorname{Qcoh}^{\mathbb{G}_m}\X\right) \]
	is fully faithful.
\end{lem}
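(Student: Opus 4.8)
\emph{Plan.} The idea is to \emph{refine} the two-step semi-orthogonal decomposition coming from Property $P$ (Lemma~\ref{lem:BT1}) using the second Bousfield triangle (Lemma~\ref{lem:BT2}). The starting observation is the kernel identity
\[
\Phi_{Q(R)} \circ j_* \cong \Phi_{Q_+},
\]
which I would establish by base change along the Cartesian square relating $\X^+\times\X$ to $\X\times\X$ together with the projection formula, using $Q_+ = (j\times\operatorname{Id})^*Q(R)$; since $j$ is an open immersion the relevant square is Cartesian and the dg base-change and projection formula of Section~\ref{sec:dg} apply. Consequently $\operatorname{Im}\Phi_{Q_+}\subseteq\operatorname{Im}\Phi_{Q(R)}$ and $\Phi_{Q(R)}\circ j_*j^*\cong\Phi_{Q_+}\circ j^*$.

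First I would produce the functorial three-step filtration. For $x\in D(\QCoh^{\mathbb{G}_m}\X)$, Property $P$ gives the exact triangle $\Phi_{Q(R)}(x)\to x\to\Phi_{\operatorname{cone}(\eta)}(x)$, so $\Phi_{\operatorname{cone}(\eta)}(x)$ is the outer graded piece. Applying the exact functor $\Phi_{Q(R)}$ to the Bousfield triangle $C\to\operatorname{Id}\to j_*j^*$ of Lemma~\ref{lem:BT2} evaluated at $x$ yields
\[
\Phi_{Q(R)}C(x)\to\Phi_{Q(R)}(x)\to\Phi_{Q(R)}j_*j^*(x)\cong\Phi_{Q_+}(j^*x),
\]
which splits $\Phi_{Q(R)}(x)$ into a piece in $\operatorname{Im}(\Phi_{Q(R)}\circ C)$ and a piece in $\operatorname{Im}\Phi_{Q_+}$. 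Splicing the two triangles produces a functorial filtration of $x$ with graded pieces in $\operatorname{Im}\Phi_{\operatorname{cone}(\eta)}$, $\operatorname{Im}\Phi_{Q_+}$, and $\operatorname{Im}(\Phi_{Q(R)}\circ C)$, in that order.

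Next I would verify the three semi-orthogonality conditions. Two are immediate: since both $\operatorname{Im}\Phi_{Q_+}$ and $\operatorname{Im}(\Phi_{Q(R)}\circ C)$ lie inside $\operatorname{Im}\Phi_{Q(R)}$, and Property $P$ already gives $\operatorname{Hom}(\operatorname{Im}\Phi_{Q(R)},\operatorname{Im}\Phi_{\operatorname{cone}(\eta)})=0$, the first factor is orthogonal to the other two. The remaining vanishing $\operatorname{Hom}(\operatorname{Im}(\Phi_{Q(R)}\circ C),\operatorname{Im}\Phi_{Q_+})=0$ is the crux. Here I would use that, by Lemma~\ref{lem:BT1}, $\Phi_{Q(R)}$ is a Bousfield co-localization, so $\operatorname{Im}\Phi_{Q(R)}$ is co-reflective; the co-reflection adjunction together with $\Phi_{Q_+}\cong\Phi_{Q(R)}\circ j_*$ reduces
\[
\operatorname{Hom}(\Phi_{Q(R)}C(a),\Phi_{Q_+}(b))\cong\operatorname{Hom}(\Phi_{Q(R)}C(a),j_*b)\cong\operatorname{Hom}(j^*\Phi_{Q(R)}C(a),b),
\]
the last step by $j^*\dashv j_*$. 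Finally $j^*\Phi_{Q(R)}\cong j^*$, from the first part of Lemma~\ref{lem:Qdelta} (i.e.\ $(\operatorname{Id}\times j)^*Q(R)\cong(\operatorname{Id}\times j)^*\Delta(R)$, identifying $\Phi_{Q(R)}$ with the identity after restricting the target to $\X^+$), and $j^*C\cong 0$ (apply $j^*$ to the triangle defining $C$ and use that the counit $j^*j_*\to\operatorname{Id}$ is invertible), which forces the group to vanish. This establishes the weak semi-orthogonal decomposition.

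For full faithfulness, faithfulness is Lemma~\ref{lem:faithfulness}, giving $j^*\circ\Phi_{Q_+}\cong\operatorname{Id}$. The functorial filtration exhibits $x\mapsto\Phi_{Q_+}(j^*x)$ as the projection onto the middle component $\operatorname{Im}\Phi_{Q_+}$; as a projection it restricts to the identity there, so $\Phi_{Q_+}\circ j^*\cong\operatorname{Id}$ on $\operatorname{Im}\Phi_{Q_+}$. Combined with $j^*\circ\Phi_{Q_+}\cong\operatorname{Id}$, this shows $\Phi_{Q_+}$ and $j^*$ restrict to mutually inverse equivalences between $D(\QCoh^{\mathbb{G}_m}\X^+)$ and $\operatorname{Im}\Phi_{Q_+}$, whence $\Phi_{Q_+}$ is fully faithful. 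The main obstacle I anticipate is the third semi-orthogonality --- transporting the orthogonality of Lemma~\ref{lem:BT2} through the non-invertible functor $\Phi_{Q(R)}$ rigorously rather than merely object-wise --- and, relatedly, upgrading $\Phi_{Q_+}\circ j^*\cong\operatorname{Id}$ on the component to a \emph{natural} isomorphism, since the object-wise statement alone does not yield fullness.
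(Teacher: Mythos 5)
Your proposal is correct and takes essentially the same route as the paper: the paper combines the same two Bousfield triangles (Lemmas~\ref{lem:BT1} and~\ref{lem:BT2}) and verifies the same compatibility $j_*j^*\circ\Phi_{Q(R)}\cong j_*j^*$ (equivalently $j^*\circ\Phi_{Q(R)}\cong j^*$, via Lemma~\ref{lem:Qdelta}), but it simply cites the abstract gluing statement \cite[Lemma 3.3.5]{bdf} for the resulting weak semi-orthogonal decomposition and the full faithfulness of the induced functor, rather than re-deriving the filtration, the Hom-vanishings, and the naturality by hand as you do. The two difficulties you flag at the end are exactly what that cited lemma packages up, and the explicit arguments you sketch for them (the adjunction chain for the third orthogonality, and the collapse of the functorial filtration on $\operatorname{Im}\Phi_{Q_+}$ giving the natural isomorphism $\Phi_{Q_+}\circ j^*\cong\operatorname{Id}$ there) do go through.
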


\begin{proof}

	We want to use \cite[Lemma 3.3.5]{bdf} to prove this statement. This says that if $ C_1 \to \operatorname{Id} \to L_1 $ and $ C_2 \to \operatorname{Id} \to L_2 $ are Bousfield triangles in a triangulated category $ \mathcal{T} $ such that $ L_1 C_2 \to L_1 $ is an isomorphism, there is a weak semi-orthogonal decomposition 
	\[
	\mathcal{T} = \langle \operatorname{Im} C_2 \circ L_1, \operatorname{Im} C_2 \circ C_1 , \operatorname{Im} L_2 \rangle.
	\]
	
	This induces a fully-faithful functor
	\[
	F: \mathcal{T}/ \operatorname{Im} C_1 \to \mathcal{T}.
	\]

	We shall take the two Bousfield triangles from Lemma~\ref{lem:BT2} and Lemmas~\ref{lem:BT1} to be the triangles $ 1 $ and $ 2 $ above respectively. Hence we need to show that
	\begin{align*}
	j_* \circ j^* \circ \Phi_{Q(R)} = j_* \circ j^*.
	\end{align*}
	
	The Fourier-Mukai transform $ j^* \circ \Phi_{Q(R)}  $ is the one induced by the kernel $ (\operatorname{Id} \times j)^*Q(R)$. Note that the $ R \otimes R $ module structure on $ Q(R) $ is  $ p \otimes s $.  By Lemma ~\ref{lem:Qdelta} applied to an open cover of $ X \times X^+ $, $ (\operatorname{Id} \times j)^*Q(R) $ is isomorphic to $ (\operatorname{Id} \times j)^*\Delta(R)  $, as we are inverting a positive element with the $ s $-module structure.  Noting that $ \Phi_{(\operatorname{Id} \times j)^*\Delta(R) }  = j^*$ proves the claim.

	The functor $ F $ mentioned above is the functor $ \Phi_{Q_+} $.
\end{proof}

\begin{rem}
	We  mention here that \cite{bdf} proves a semi-orthogonal decomposition result for the homotopy category of the category of spectra in simplicial graded modules over $ R $ (viewed as a simplicial graded commutative ring), which is morally equivalent to a part of the semi-orthogonal decomposition (Lemma \ref{lem:FF}) that we prove. The relevant result is \cite[Proposition 5.4.7]{bdf} for the interested reader.
\end{rem}

\begin{rem}
Notice that the 3-term semi-orthogonal description of Lemma~\ref{lem:FF} closely resembles the one in \cite[Theorem 2.10]{HL}, with which the author defines the window subcategory. In future work, we will show that  our semi-orthogonal description generalizes the one given in \textit{loc.\ cit.\ } By resolving singular schemes as semi-free dg-schemes, we are able to lift the restrictions imposed by properties $ (L+) $ and $ (A) $ that appear in the result of  \cite{HL} (in our context of $ \mathbb G_m $-actions).
\end{rem}

\section{Windows and Wall-crossings} \label{sec:4}
 In this section, we study the object $ Q_+ $ defined earlier as a Fourier-Mukai kernel for the window functor,
\[
\Phi_{Q_+}  : \operatorname{Perf}^{\mathbb G_m}(\X^+) \longrightarrow \operatorname{Perf}^{\mathbb G_m}(\X).
\] We use the various properties of $ Q $ proved in the previous section to prove that the functor is always fully faithful. Then, we identify the essential image of this functor explicitly under certain assumptions on our dg-scheme $ \X $, which proves our first main theorem, Theorem~\ref{thm:intro1}. Using these results, we analyze wall-crossing functors between the GIT quotients and prove that it is an equivalence in some cases, and this gives our second main result Theorem~\ref{thm:intro2}. We also study the cases where the wall-crossing functor is not an equivalence, but only fully-faithful. Then, under certain conditions on $ \X $, we find a semi-orthogonal decomposition for a GIT quotient, where one of the pieces of the SOD is the derived category of the other GIT quotient. 

 For ease of exposition, we first tackle the case that $ X $ is affine space (\ie Example~\ref{ex:affineT}) in  Section~\ref{sec:aff}, as the calculations are more explicit. The case of general smooth affine schemes $ X $ will reduce to this using the Luna slice theorem in Section~\ref{sec:gen}. Wall-crossings are studied in Section~\ref{sec:wc}. Finally, we also look at applications to flops and the Bondal-Orlov conjecture, and prove the derived equivalence result for the Mukai flop in Section~\ref{sec:flops}.

\subsection{Windows over affine space}\label{sec:aff}
Here, we consider the case where the underlying scheme of the dg-scheme $ \X $ is affine space.  On the level of the rings, this means that we will choose (as in Example~\ref{ex:affineT},~\ref{ex:3.5})
\[
 T = k[\mathbf{x},\mathbf{y}],
\] where we use the shorthand notation $ \mathbf x  $ to mean $ x_1,x_2,\cdots,x_l $ and $   \mathbf y   $ to mean $ y_1,y_2,\cdots,y_m $ with internal $ \mathbb Z $-grading $ \dg x_i = a_i > 0 $ and $ \dg y_i = b_i < 0  $. We will repeatedly use the above convention for bold letters for notational convenience.   We also define the following two integers:
\[
\mu_+ := \sum a_i, \qquad \mu_{-} := \sum b_i.
\]

	\begin{lem}\label{lem:QPaffine} When $ T = k[\mathbf{x},\mathbf{y}] $ with the notations above,  the object $ Q(R) $ satisfies property $ P $, \ie the  morphism  \[ \rho : \left(Q(R)   \sopl Q(R)\right)_0 \to  Q(R)	,		\] from equation~\eqref{eq:rho} is an isomorphism.
\end{lem}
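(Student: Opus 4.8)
The plan is to separate the derived and underived content of $\rho$. By the definition in \eqref{eq:rho}, $\rho$ factors as
\[
(Q(R)\sopl Q(R))_0 \xrightarrow{\ \alpha\ } (Q(R)\sop Q(R))_0 \xrightarrow{\ \sim\ } Q(R),
\]
where the second arrow is the isomorphism supplied by Lemma~\ref{lem:QQ0} and $\alpha$ is the canonical comparison map from the derived tensor product to the underived one. Since extracting a fixed piece of the internal grading (here the middle-degree-zero part $(-)_0$) is exact, $\alpha$ is an isomorphism as soon as the canonical map $Q(R)\sopl Q(R)\to Q(R)\sop Q(R)$ is a quasi-isomorphism, i.e.\ as soon as $Q(R)$ is $K$-flat over $R$ for one of its two module structures. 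Thus the entire statement reduces to establishing $K$-flatness of $Q(R)$ over $R$.

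To prove $K$-flatness I would first forget the differential and work with the underlying graded-commutative algebras. Writing $R^{\natural}$ and $Q(R)^{\natural}$ for the algebras obtained from $R$ and $Q(R)$ by forgetting the differential, the explicit formula \eqref{eq:Q} shows that $R^{\natural}=k[\mathbf x,\mathbf y,\mathbf e,\mathbf f]$ is a graded polynomial ring (with positive generators $\mathbf x,\mathbf e$ and negative generators $\mathbf y,\mathbf f$) and that $Q(R)^{\natural}=k[\mathbf x,\mathbf z,u,\mathbf e,\mathbf g]=Q(R^{\natural})$. Hence $Q(R)^{\natural}$ is precisely the $Q$-construction applied to the polynomial ring $R^{\natural}$, and via the $p$-module structure it is a \emph{free} $R^{\natural}$-module: one produces an explicit monomial basis exactly as in the non-dg case of Example~\ref{ex:affineT}, the point being that $k[\mathbf x,\mathbf z,u,\mathbf e,\mathbf g]$ is free over the subalgebra generated by $\mathbf x,\mathbf e,u^{-b_i}z_i,u^{-\dg f_i}g_i$. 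This is the ring-theoretic heart of the computation, and it is completely insensitive to the homological degrees of $\mathbf e,\mathbf f$.

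Finally I would upgrade this graded freeness to $K$-flatness over the cdga $R$ by exhibiting $Q(R)$ as a semi-free $R$-dg-module. Take the homogeneous $R^{\natural}$-basis $\{b_\alpha\}$ of $Q(R)^{\natural}$ from the previous step, with $b_\alpha$ in homological degree $n_\alpha\le 0$, and filter $Q(R)$ by the $R$-dg-submodules generated by the basis elements of homological degree $\ge n$. Because $R$ is concentrated in non-positive homological degrees, the differential can only raise homological degree, so $d b_\alpha$ involves only basis elements of strictly higher homological degree; this is exactly what is needed to see that each of these submodules is genuinely a dg-submodule and that the successive subquotients are free with zero induced differential. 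The resulting exhaustive increasing filtration presents $Q(R)$ as semi-free over $R$ — the filtration terminates at the top since $Q(R)$ is bounded above, even though it may be unbounded below — and hence $K$-flat. Therefore $\alpha$ is an isomorphism, so $\rho$ is an isomorphism and $Q(R)$ has Property~$P$. I expect the main obstacle to be precisely the interface between the graded and the dg pictures: one must check carefully that the underlying graded freeness genuinely lifts to a semi-free filtration over $R$ (this hinges on the triangularity of the differential forced by the non-positive grading) and that semi-freeness, hence $K$-flatness, survives even when the Koszul--Tate resolution $R$ carries infinitely many generators in the negative homological degrees.
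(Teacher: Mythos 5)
Your reduction of the lemma to $K$-flatness of $Q(R)$ over $R$ is where the argument breaks down: the central claim that $Q(R)^{\natural}$ is a free (or even flat) $R^{\natural}$-module for the $p$-structure is false as soon as there are at least two negative generators. Already for $T=k[x,y_1,y_2]$ with $\deg y_1=\deg y_2=-1$ and $R=T$, the $p$-structure makes $Q(T)=k[x,z_1,z_2,u]$ a $k[x,y_1,y_2]$-module via $y_j\mapsto uz_j$, and $(z_2,-z_1)$ is a syzygy of $(uz_1,uz_2)$ that is not a multiple of the Koszul syzygy $(uz_2,-uz_1)$ (one would have to divide by $u$); hence $\operatorname{Tor}_1^T(Q(T),k)\neq 0$ and $Q(T)$ is not flat. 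Since $Q(T)$ sits in a single homological degree, it is not $K$-flat either, and the symmetric failure occurs for the $s$-structure once there are two positive generators, so in the cases of main interest (e.g.\ the Mukai flop, $l=m\geq 2$) neither module structure is $K$-flat. What makes the lemma true is not flatness of $Q(R)$ but the vanishing of the particular groups $\operatorname{Tor}_i^R\!\left({}_sQ(R),{}_pQ(R)\right)$ for $i>0$: the paper exhibits an explicit semi-free resolution $K=(R\otimes R[u,\boldsymbol\kappa,\boldsymbol\lambda,\boldsymbol\mu,\boldsymbol\nu],d)$ of $Q(R)$ over $R\otimes R$ (a Koszul complex on the regular sequence $\mathbf x^2-u^{\deg \mathbf x}\mathbf x^1$, etc.), tensors it against the second copy of $Q(R)$, and checks that the resulting relations still form a regular sequence, so that everything ``solves out'' and the derived tensor product collapses onto the underived one. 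That explicit resolution cannot be bypassed by a flatness argument.

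A secondary gap: you cite Lemma~\ref{lem:QQ0} for the isomorphism $(Q(R)\sop Q(R))_0\cong Q(R)$, but that lemma only asserts that the two outer maps $(Q(R)\;{}_s\!\otimes_\pi\Delta)_0\to Q(R)$ and $(\Delta\;{}_\sigma\!\otimes_p Q(R))_0\to Q(R)$ are isomorphisms and that the triangle commutes; it does not say that the map out of $(Q(R)\sop Q(R))_0$ is an isomorphism. That underived identification is in fact true and follows from the same elimination computation, but it must be supplied rather than quoted. Your final step (upgrading termwise freeness of the underlying graded module to semi-freeness of the dg-module via the triangularity of the differential) is sound in principle, but here it has nothing to apply to.
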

\begin{proof}
	In order to compute the derived tensor product, we need to find a semi-free resolution of $ Q(R) $. We claim that the following cdga $ K $ is a semi-free resolution of $ Q(R) $
	\[
	K := (R\otimes R[u, \boldsymbol\kappa,\boldsymbol\lambda,\boldsymbol\mu,\boldsymbol\nu], d),
	\] such that
	\[
	d \boldsymbol\kappa = (\mathbf{x}^2 - u^\mathbf{\dg \mathbf x}\mathbf{x}^1), \, \, d \boldsymbol\lambda = (  \mathbf{e}^2 - u^{\dg\mathbf{e}}\mathbf{e}^1 ), \, \, d \boldsymbol\mu = (\mathbf{y}^1 - u^\mathbf{- \dg \mathbf y}\mathbf{y}^2), \, \, d \boldsymbol\nu = (\mathbf{f}^1 - u^\mathbf{- \dg \mathbf f}\mathbf{f}^2).
	\] 

	In order to make the notation compact, we are using $ \boldsymbol \kappa $ to denote a set of elements (as many as the number of  $ x_i $s); and similarly for $ \boldsymbol \lambda $, $ \boldsymbol \mu $ and $ \boldsymbol \nu $. The superscripts ($ 1 $ or $ 2 $) refer to which copy of $ R $ the variable belongs to; for example $ \mathbf x^1 $ denotes $  \mathbf x^1 \otimes 1 \in R \otimes R $ (we apologize for this unfortunate choice of notation).
	
	It is easy to see that the morphism
	\[
		K \to Q
	\]  sending $ \boldsymbol \kappa, \boldsymbol \lambda, \boldsymbol\mu  $ and $ \boldsymbol \nu $ to zero is a quasi-isomorphism.

	Now,	we explicitly calculate $ \left(Q(R) \sopl Q(R)\right) $:
	\begin{align*}
	\left(Q(R)  \sopl Q(R)\right) &= T\otimes T[\boldsymbol\kappa,\boldsymbol\lambda,\boldsymbol\mu,\boldsymbol\nu, u]\sop k[\mathbf x, \mathbf z, \mathbf e, \mathbf g, v]\\
	&\cong k[\mathbf x,\mathbf x^1,\mathbf e, \mathbf e^1, \mathbf z, \mathbf y^1, \mathbf g, \mathbf f^1, \boldsymbol\kappa,\boldsymbol\lambda,\boldsymbol\mu,\boldsymbol\nu, u, v],
	\end{align*} where the dg structure remains unchanged on all the elements  expect $ \kappa,\lambda,\mu,\nu $ on which it is given by
	\[
		d \boldsymbol  \kappa = (\mathbf x - u^{\dg \mathbf x}\mathbf x^1), \, \,  d \boldsymbol  \lambda = (  \mathbf{e} - u^{\dg\mathbf{e}}\mathbf{e}^1 ), \, \, d \boldsymbol  \mu = (\mathbf{y}^1 - (uv)^\mathbf{- \dg \mathbf y}\mathbf{z}), \, \, d \boldsymbol  \nu = (\mathbf{f}^1 - (uv)^\mathbf{- \dg \mathbf f}\mathbf{g}).
	\]
	
	Now, we see that this cdga is quasi-isomorphic (by ``solving out" the above relations) to 
	
	 \[
	  k[\mathbf x^1, \mathbf e^1, \mathbf z, \mathbf g, u, v],
	 \] where the tri-degree of the generators are as follows
	 \[
	 \operatorname{tri}-\dg x^1_i = (\dg x_i, 0,0) \,\,  \operatorname{tri}-\dg e^1_i = (\dg e_i, 0,0) \,\, \operatorname{tri}-\dg z_i = (0, 0,\dg y_i),
	 \]
	 \[
	 \operatorname{tri}-\dg g_i = (0, 0,\dg f_i), \,\, \operatorname{tri}-\dg u = (-1,1, 0),\,\,\operatorname{tri}-\dg v = (0, -1,1).
	 \]

Taking middle degree $ 0 $ invariants we get
	\begin{align*}
	\left(Q(R)  \sopl Q(R)\right)_0 &\cong k[\mathbf x^1, \mathbf e^1, \mathbf z, \mathbf g, uv] \\
	&\cong Q(R).
	\end{align*}
	If we carefully trace through the series of isomorphisms in the proof, we see that it is induced by the morphism $ \rho $.
\end{proof}

\begin{lem}\label{lem:generators}
	
	The category of perfect objects $ \operatorname{Perf}^{\mathbb G_m}(\X^+)  $ is generated by $ j^*\mathcal{R}(i) $ for $ i \in (-\mu_+,0] $. 
	
\end{lem}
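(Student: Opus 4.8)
The plan is to begin from the (infinite) generation of $\Perf^{\mathbb{G}_m}(\X^+)$ by all internal twists $j^*\mathcal{R}(i)$, $i\in\mathbb{Z}$, and then cut this generating set down to the window $(-\mu_+,0]$ by means of a Koszul complex. First I would record that, since $\dg y_j<0$ for all $j$, every positively graded element of $T=k[\mathbf{x},\mathbf{y}]$ lies in the ideal $(x_1,\dots,x_l)$, so that $J^+=(\mathbf{x})$ and $X^+=X-V(\mathbf{x})$. The twists $j^*\mathcal{R}(i)$ for all $i\in\mathbb{Z}$ generate $\Perf^{\mathbb{G}_m}(\X^+)$ (by Definition~\ref{def:perf} applied to $\X^+$, the internal $\mathbb{G}_m$-weight supplying the twists, or equivalently by localization along the open immersion $j$); the entire content of the lemma is therefore the reduction of this set to the $\mu_+$ consecutive weights in $(-\mu_+,0]$.

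The key tool is the $\mathbb{G}_m$-equivariant Koszul complex $K^\bullet$ on $x_1,\dots,x_l$, whose term in Koszul degree $-p$ is $\bigoplus_{|S|=p}\mathcal{O}(-a_S)$ with $a_S:=\sum_{k\in S}a_k$, the differential being given by the (internal degree zero) multiplications by the $x_k$. Since $x_1,\dots,x_l$ is part of a coordinate system, $K^\bullet$ resolves $\mathcal{O}_{V(\mathbf{x})}$; on $X^+=\bigcup_i D(x_i)$ the element $x_i$ is invertible on $D(x_i)$, so the Koszul factor for $x_i$ makes $K^\bullet|_{D(x_i)}$ contractible, and tensoring this contractible factor with the flat semi-free $\mathcal{R}$ keeps it contractible. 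Hence $K^\bullet\otimes_T\mathcal{R}$ is acyclic on $X^+$, i.e.\ isomorphic to zero in $D(\QCoh^{\mathbb{G}_m}\X^+)$, its terms being $\bigoplus_{|S|=p} j^*\mathcal{R}(-a_S)$. Reading off the top term gives that $j^*\mathcal{R}(0)$ lies in the thick subcategory generated by $\{j^*\mathcal{R}(-a_S):S\neq\varnothing\}$ (weights in $[-\mu_+,-1]$), while reading off the bottom term gives that $j^*\mathcal{R}(-\mu_+)$ lies in the thick subcategory generated by $\{j^*\mathcal{R}(-a_S):S\subsetneq\{1,\dots,l\}\}$ (weights in $(-\mu_+,0]$). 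Twisting by $(i)$ yields the two relations
\begin{equation*}
j^*\mathcal{R}(i)\in\langle\, j^*\mathcal{R}(i') : i'\in[i-\mu_+,\,i-1]\,\rangle,\qquad j^*\mathcal{R}(i-\mu_+)\in\langle\, j^*\mathcal{R}(i') : i'\in(i-\mu_+,\,i]\,\rangle,
\end{equation*}
valid for every $i\in\mathbb{Z}$.

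Setting $\mathcal{W}:=\langle j^*\mathcal{R}(i): i\in(-\mu_+,0]\rangle$, I would then prove $j^*\mathcal{R}(i)\in\mathcal{W}$ for all $i$ by two inductions. Downward from $0$: for $i\leq-\mu_+$ the second relation, with $i$ replaced by $i+\mu_+$, expresses $j^*\mathcal{R}(i)$ through weights in $(i,\,i+\mu_+]\subseteq(i,0]$, all strictly larger than $i$ and $\leq 0$, hence already in $\mathcal{W}$. Upward from $1$: for $i>0$ the first relation expresses $j^*\mathcal{R}(i)$ through weights in $[i-\mu_+,\,i-1]$, each of which is either $\leq 0$ (in $\mathcal{W}$ by the downward step) or strictly between $0$ and $i$ (in $\mathcal{W}$ by the inductive hypothesis). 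Together these give $\mathcal{W}=\Perf^{\mathbb{G}_m}(\X^+)$; the reverse inclusion is trivial.

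The routine parts are the two inductions and the weight bookkeeping; the step deserving the most care — and where I expect the only genuine subtlety — is the acyclicity of $K^\bullet\otimes_T\mathcal{R}$ on $X^+$ in the equivariant dg-derived category, since $\mathcal{R}$ is only semi-free (possibly with infinitely many generators) rather than of finite type. Flatness of $\mathcal{R}$ over $T$ together with the local contractibility argument handles this, but one must be sure that the tensor product and restriction are taken in the derived sense so that ``isomorphic to zero'' is meaningful, and that the initial generation by all twists is legitimate even though $X^+$ itself may carry no ample line bundle (so that this generation is read off the $\mathbb{G}_m$-linearization, or off the quotient stack, rather than from an ample bundle on $X^+$).
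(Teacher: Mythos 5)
Your proof is correct and follows essentially the same route as the paper: both start from generation by all twists $j^*\mathcal R(i)$ (the paper justifies this by noting $X^+$ is quasi-affine so $\mathcal O_{X^+}$ is ample, which settles the concern you raise at the end), and both use the $\mathbb G_m$-equivariant Koszul complex on $x_1,\dots,x_l$, acyclic on $X^+$ after tensoring with $\mathcal R$, twisted appropriately to push every weight into $(-\mu_+,0]$. Your two explicit inductions simply spell out the bookkeeping the paper compresses into "twist by $\mathcal R(p)$ for $p<0$ and by $\mathcal R(q)$ for $q>0$."
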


\begin{proof}

	The structure sheaf $ \mathcal{O}_{X^+} $ is an ample line bundle as $ X^+ $ is quasi-affine. By Definition~\ref{def:perf}, the category is generated by $ j^*\mathcal{R}(i)  $ where $ i \in \mathbb{Z} $. In fact, we can restrict to a smaller set of generators, $ j^*\mathcal{R}(i) $ where $  i \in (-\mu_{+},0] $ due to the following argument.
	
	Consider the Koszul complex $ \mathcal{K}_\bullet(x_1,x_2, \cdots, x_n) $ on $ X $
	\[
	\begin{tikzcd}[ column sep=large]
	0 \arrow[r, ] & \mathcal{O}_X(-\mu_{+}) \arrow[r, ] & \cdots \arrow[r, ] & \bigoplus_i \mathcal{O}_X(-a_i).\arrow[r,"{(x_1,x_2,\cdots,x_n)}"] & \mathcal{O}_X \arrow[r, ] &  0 \\
	\end{tikzcd}
	\]
	This is  acyclic when restricted to  $ X^+ $ and the twists in all intermediate terms lie in $(-\mu_{+},0]$. Now, we may apply  $ \otimes_{\mathcal{O}_X^+} j^*\mathcal{R} $ to see that $ j^*\mathcal{R}(-\mu_{+}) $  is generated as claimed. We can twist the Koszul complex by $ \mathcal{R}(p)  $ where $ p < 0 $ to show that $ j^*\mathcal{R}(i) $ for $ i \leq -\mu_{+} $ is generated as claimed. Similarly, we twist by $ \mathcal{R}(q) $ where $ q > 0 $, to show that $ j^*\mathcal{R}(i) $ for $ i > 0 $ is generated as claimed.
\end{proof}

Now, we would like to find a set of generators for the essential image of  $ \Phi_{Q_+}  $. For this result we need to make the assumption that all the positive generators $ e_i $ vanish, \ie $ R $ is semi-free over $ T $, where all the (homological) generators $ f_i $ of $R$ over $T$ are of non-positive internal degree.

 For the next lemma, we restrict the functor $ \Phi_{Q_{+}} $ to the subcategory of perfect objects $ \operatorname{Perf}^{\mathbb G_m}(\X^+) $, and show  that it takes perfect objects to perfect objects (and hence, \textit{a posteriori}, the notation $ \Phi_{Q_+} :  \operatorname{Perf}^{\mathbb G_m}(\X^+) \longrightarrow \operatorname{Perf}^{\mathbb G_m}(\X) $ is justified).

\begin{lem}\label{lem:window}  Assume that the degrees of the generators $ f_i $ of $ T \to R $ are non-positive, i.e., $ \operatorname{deg} f_i \leq 0 $.  Then, the image of $ \Phi_{Q_+} :  \operatorname{Perf}^{\mathbb G_m}(\X^+) \longrightarrow \operatorname{Perf}^{\mathbb G_m}(\X) $ is the full subcategory of $  \operatorname{Perf}^{\mathbb G_m}(\X) $ generated by $ \mathcal{R} (i) $ where $ i \in (-\mu_{+},0] $. 
		Moreover, we have  $$  \Phi_{Q_+} \circ j^* (\mathcal R (i) )  = \mathcal R(i)  \text{ where }  i \in (-\mu_{+},0]. $$
\end{lem}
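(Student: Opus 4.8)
The plan is to reduce everything to the ``Moreover'' statement and then to a single local cohomology computation. Since $T=k[\mathbf x,\mathbf y]$ with $J^+=(\mathbf x)$, Lemma~\ref{lem:QPaffine} shows that $Q(R)$ satisfies Property $P$, so by Lemma~\ref{lem:FF} the functor $\Phi_{Q_+}$ is fully faithful. As $\Phi_{Q_+}$ is triangulated, its essential image is the thick subcategory generated by the images of any generating set, and Lemma~\ref{lem:generators} tells us that $\operatorname{Perf}^{\mathbb G_m}(\X^+)$ is generated by $j^*\mathcal R(i)$ for $i\in(-\mu_+,0]$. Hence it suffices to prove $\Phi_{Q_+}(j^*\mathcal R(i))\cong\mathcal R(i)$ for $i$ in this range; this also shows that the image lands in $\operatorname{Perf}^{\mathbb G_m}\X$ (each $\mathcal R(i)$ is perfect by Definition~\ref{def:perf}) and identifies it with the subcategory generated by $\mathcal R(i)$, $i\in(-\mu_+,0]$.

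I would compute $\Phi_{Q_+}(j^*\mathcal R(i))$ at the level of modules over the second copy of $R$. Unwinding the Fourier--Mukai transform, it equals the weight-zero part (in the first grading) of the first-factor equivariant cohomology $R\Gamma(X^+,\widetilde{Q(R)}(i))$, equivalently the weight-$i$ part of $R\Gamma(X^+,\widetilde{Q(R)})$, with the $R$-action carried through the map $s$; here I use $Q_+=(j\times\operatorname{Id})^*Q(R)$ and that pushforward along the second projection is precisely weight-zero first-factor cohomology. Using the explicit form $Q(R)=Q(T)[\mathbf g]$ from \eqref{eq:Q} together with Example~\ref{ex:affineT}, the first-factor $T$-module structure is $x_j\mto x_j$ and $y_j\mto u^{-b_j}z_j$, so the unstable ideal $J^+=(\mathbf x)$ acts only by multiplication by the $x_j$. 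Consequently the underlying graded sheaf splits off the $\mathbf x$-variables, and the local cohomology of $\widetilde{Q(R)}$ along $V(\mathbf x)$ is computed by that of $k[\mathbf x]$ tensored over $k$ with $k[\mathbf z,\mathbf g,u]$.

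The window range $(-\mu_+,0]$ then falls out of a degree count. The local cohomology triangle $R\Gamma_{V(\mathbf x)}(\widetilde{Q(R)})\to Q(R)\to R\Gamma(X^+,\widetilde{Q(R)})$ reduces the claim to two facts. First, a direct inspection of $Q(R)$ shows that its weight-$i$ first-graded piece is, via $s$, a free rank-one dg-$R$-module generated by $u^{-i}$ in internal degree $-i$, hence isomorphic to $\mathcal R(i)$, as soon as $i\leq 0$ (the constraint $i\leq 0$ is exactly what guarantees a non-negative power of $u$ for every monomial, and the differential matches since the identification is induced by $s$). Second, $H^*_{V(\mathbf x)}(\mathcal O_X)$ is concentrated in cohomological degree $l$ and in internal degrees $\leq-\mu_+$, its socle being $(x_1\cdots x_l)^{-1}$ in degree $-\mu_+$; tracking the first grading ($\dg x_j=a_j$, $\dg u=-1$, $\dg z_j=\dg g_j=0$) and using that every generator of $Q(R)$ other than the $x_j$ has non-positive first degree, the weight-$i$ part of $R\Gamma_{V(\mathbf x)}(\widetilde{Q(R)})$ requires a monomial of first degree $\leq-\mu_+$ and therefore vanishes when $i>-\mu_+$. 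Combining the two, $\Phi_{Q_+}(j^*\mathcal R(i))\cong\mathcal R(i)$ for $i\in(-\mu_+,0]$.

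The main obstacle is precisely this vanishing, and it is where the hypothesis is used: the statement that all generators of $Q(R)$ beyond the $x_j$ have non-positive first degree is equivalent to the absence of positive dg-generators, i.e.\ to $\dg f_i\leq0$ (had there been a generator $e_i$ of positive internal degree, it would contribute arbitrarily large positive first degree and the top local cohomology would survive for $i>-\mu_+$). A secondary technical point I would check is that the cdga differential on $\widetilde{Q(R)}$ and the \v{C}ech differential computing $R\Gamma(X^+,-)$ assemble into a spectral sequence that degenerates in the window: because the higher local cohomology vanishes in the relevant first degrees, the hypercohomology collapses onto $Q(R)$ in first degree $i$ with its inherited differential, which by the first fact above is exactly $\mathcal R(i)$.
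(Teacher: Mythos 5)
Your proposal is correct and follows essentially the same route as the paper: reduce to the generators $j^*\mathcal R(i)$, $i\in(-\mu_+,0]$, via Lemma~\ref{lem:generators}, identify the weight-$i$ piece of $Q(R)$ under $s$ with $\mathcal R(i)$ generated by $u^{-i}$, and kill the top cohomology along $V(\mathbf x)$ by the degree count $\le-\mu_+$ (the paper phrases your local cohomology triangle as the explicit \v{C}ech complex in the $x_j$, but the computation and the use of the hypothesis $\dg f_i\le 0$ are identical).
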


\begin{proof}
	It suffices to find the image of $ \Phi_{Q_+} $ on the generators obtained in Lemma \ref{lem:generators} -- $ j^*\mathcal R(i) $ for $ i \in (-\mu_{+},0] $.
	 We have the following sequence of isomorphisms.
	\begin{align*}
	\Phi_{Q_+} (j^*\mathcal{R}(i))  &=  (R\pi_{2*}(Q_+ \; {}_p\otimes \pi_1^* j^* \mathcal{R}(i)))^{\mathbb{G}_m} \\
	&= (R\pi_{2*}Q_+ (i, 0))_{(0,*)} \\
	&=  (\pi_{2*}(Q_+ \otimes \mathcal C_{X^+ \times X})(i,0 ))_{(0,*)},
	\end{align*}where $ \mathcal C_{X^+ \times X} $ denotes the \v{C}ech resolution obtained by inverting the $ x_i $. Recall that the positively graded elements in $ T $ are $ x_1,\cdots, x_l $, with internal degrees $ a_1,\cdots,a_l $ respectively. We claim that we can use the \v{C}ech resolution to compute the derived pushforward $ R \pi_{2*} $ due to the following argument.
	
	The morphism $ \pi_2 : \X^+ \times \X \to \X $ factors as 
	\[\begin{tikzcd}
		\X^+ \times \X  \arrow[r,"for"] &(X^+ \times X, \mathcal O_{X^+} \boxtimes \mathcal R)  \arrow[r,"\pi^0"] & (X, \mathcal R) = \X,
		\end{tikzcd}
	\] where the first morphism is the forgetful morphism. To compute $ R\pi^0_* $, we can use the \v{C}ech resolution and then to compute $ R \pi_{2*} $, we  use the composition of derived functors.

Notice that we are tensoring over the $ p $ module structure and hence the result of the cohomology computation will have the $ s $ module structure.

We claim that the morphism 
\begin{equation}\label{eq:cech}
 \pi_{2*}(Q_+^b(i,0)) \to \pi_{2*}(Q_+^b \otimes \mathcal C_{X^+\times X}(i,0)) .
\end{equation}
 is an isomorphism for all $ b \leq 0 $ after taking internal degree $ (0,*) $ invariants. Here, the upper index $ b $ denotes the homological degree. First, let us compute the right hand side using a standard \v{C}ech cohomology computation. Notice that
\[
H^c(\pi_{2*}(Q_+^b \otimes \mathcal C_{X^+\times X}(i,0))) =  0 \qquad c \neq 0,l.
\] The $ c = l $ term is generated by
 \[
 \prod_{i=1}^l x_i^{-p_i} k[\mathbf z,  u] \prod_{\sum \operatorname{hdeg}f_i = b }g_i,
\] where $ p_i > 0 $ for all $ 0 \leq i \leq l $. Here, we use $ \operatorname{hdeg}  $ to denote the homological degree. Upon taking degree $ (i,*)  $ invariants, this term vanishes as $ i \in (-\mu_+,0] $, and $ \sum a_i = \mu_+ $. The left hand side of \eqref{eq:cech} is just a module and by taking its cohomology, we just mean taking degree $0$ invariants.  This can be computed directly as
\[
 \pi_{2*}(Q^b)_{(i,*)}  \cong u^{-i}	k[u^{\dg \mathbf x}. \mathbf x, \mathbf z] \prod_{\sum \operatorname{hdeg}f_i = b } g_i \cong s(\mathcal R)^b(i) 
\]

 Finally, we apply $ H^0 $ and $ (0,*) $ invariants to equation~\eqref{eq:cech} to  get
\[
  s(\mathcal R)^b(i) \to  H^0(\pi_{2*}(Q_+^b \otimes \mathcal C_{X^+\times X}))_{(i,*)} \cong  u^{-i}	k[u^{\dg \mathbf x}. \mathbf x, \mathbf z] \ \prod_{\sum \operatorname{hdeg}f_i = b } g_i ,
\] 
which is clearly an isomorphism and this proves our claim.

 Now, this claim implies that
\[
 \pi_{2*}(Q_+ \otimes \mathcal C_{X^+\times X}))_{(i,*)} \cong  \mathcal R(i),
\] with the $ s $-module structure and hence we are done.

\end{proof}

We can do the same analysis for $ \Phi_{Q_-} : \
\operatorname{Perf}^{\mathbb G_m}(\X^-) \longrightarrow \operatorname{Perf}^{\mathbb G_m}(\X)   $ where $ Q_- := Q|_{X^- \times X} $. Again, we need to make the assumption that all the negative generators $ f_i $ vanish, \ie $ R $ is semi-free over $ T $, where all the generators $ e_i $ are of non-negative degree.

\begin{lem}Assuming that the degrees of the generators $ e_i $ of $ T \to R $ are non-negative, i.e., $ \operatorname{deg} e_i \geq 0 $, the image of $ \Phi_{Q_-} :  \operatorname{Perf}^{\mathbb G_m}(\X^-) \longrightarrow \operatorname{Perf}^{\mathbb G_m}(\X) $ is the full subcategory of $ \operatorname{Perf}^{\mathbb G_m}(\X) $ generated by $ \mathcal R (i) $ where $ i \in [0, - \mu_{-}) $.
\end{lem}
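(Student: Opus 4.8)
The plan is to deduce this statement from Lemma~\ref{lem:window} by exploiting the symmetry of the whole setup under the inversion automorphism $\tau : \mathbb{G}_m \to \mathbb{G}_m$, $t \mapsto t^{-1}$. Precomposing the given action with $\tau$ replaces every internal weight $i$ by $-i$; concretely this is the regrading functor sending a graded module $M$ to the same underlying module with $M_i^{\mathrm{new}} := M_{-i}^{\mathrm{old}}$. First I would record the dictionary this involution induces: since positive elements of $T$ acquire negative new weight and vice versa, we have $J^{+}_{\mathrm{new}} = J^{-}$ and hence $\X^{+}_{\mathrm{new}} = \X^{-}$; the positive generators $\mathbf e$ become new negative generators while the $\mathbf f$ become new positive generators; and $\mu_+^{\mathrm{new}} = \sum_{b_i<0}(-b_i) = -\mu_-$.

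Second, I would check that $\tau$ intertwines the kernels, i.e.\ that applying the regrading to $Q_+$ computed for the new action recovers $Q_-$ for the original action. This is where the bigrading bookkeeping enters: the convention $\operatorname{deg} u = (-1,1)$ becomes $(1,-1)$ under the flip, which simply relabels $u \mapsto u^{-1}$ and swaps the roles of the co-projection $p$ and the co-action $s$. The functoriality of the $Q$-construction on $\kcr$ recalled in Section~\ref{sec:Qa} guarantees that this relabeling carries $Q_+^{\mathrm{new}}$ to $Q_-$, so that $\Phi_{Q_+^{\mathrm{new}}}$ agrees with $\Phi_{Q_-}$ after regrading; likewise $\operatorname{Perf}^{\mathbb{G}_m}(\X^+_{\mathrm{new}}) = \operatorname{Perf}^{\mathbb{G}_m}(\X^-)$.

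Third, I would translate the hypothesis and conclusion of Lemma~\ref{lem:window} through this dictionary. The hypothesis $\operatorname{deg} f_i \le 0$ for the new action reads $-\operatorname{deg} e_i \le 0$, i.e.\ $\operatorname{deg} e_i \ge 0$ for the original action, which is exactly our assumption. The conclusion, that the image is generated by $\mathcal{R}(i)$ with $i \in (-\mu_+^{\mathrm{new}},0] = (\mu_-, 0]$ in the new grading, becomes, via the identification $\mathcal{R}(i)_{\mathrm{new}} = \mathcal{R}(-i)$, the statement that the image of $\Phi_{Q_-}$ is generated by $\mathcal{R}(j)$ for $j \in [0,-\mu_-)$. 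This is precisely the claim.

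The main obstacle I anticipate is the second step: verifying rigorously that the regrading involution genuinely carries the kernel $Q_+$ (together with its $p\otimes s$-module structure and its $\mathbb{Z}\times\mathbb{Z}$-bigrading) to $Q_-$, rather than merely to an abstractly isomorphic object, so that it truly intertwines the Fourier--Mukai functors and their essential images. Should this naturality prove delicate to pin down, the fallback is a direct computation mirroring the proof of Lemma~\ref{lem:window}: one first establishes the analogue of Lemma~\ref{lem:generators}, namely that $\operatorname{Perf}^{\mathbb{G}_m}(\X^-)$ is generated by $j^*\mathcal{R}(i)$ for $i \in [0,-\mu_-)$ using the Koszul complex on the $\mathbf y$ (legitimate since $\mathcal{O}_{X^-}$ is ample on the quasi-affine $X^-$), and then computes $\Phi_{Q_-}(j^*\mathcal{R}(i))$ via the \v{C}ech resolution obtained by inverting the $y_i$, with the roles of $p$ and $s$ and the signs of the weights interchanged; the top \v{C}ech term is then shown to vanish exactly on the range $i \in [0,-\mu_-)$, yielding the asserted set of generators.
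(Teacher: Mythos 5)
Your proposal is correct and is essentially the paper's own argument: the paper's entire proof is ``This holds by symmetry,'' and you have simply made that symmetry (the inversion $t\mapsto t^{-1}$, the induced regrading, and the dictionary $J^+_{\mathrm{new}}=J^-$, $\mu_+^{\mathrm{new}}=-\mu_-$, $\mathcal R(i)_{\mathrm{new}}=\mathcal R(-i)$) explicit, with all translations of hypothesis and conclusion checking out.
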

\begin{proof}
	This holds by symmetry.
\end{proof}

Now, we look at an example of a hypersurface in affine space, in order to compare our conditions on the degree of $ e_i $ with Property $ (L+) $ and Property $  (A)   $ of \cite{HL}. In the specific case of $ \mathbb G_m  $-actions,  let us briefly recall the setting of \textit{loc.cit.},  and the aforementioned properties $ (A) $ and $ (L+) $. Consider a linearized $ \mathbb G_m $-action on a quasi-projective variety $ Y $, and denote the unstable locus by $ Y^{\operatorname{us}} $ and the fixed locus by $ Z $. Consider the following morphisms,
\[\begin{tikzcd}
Z \arrow[r,"\alpha", shift left]  & Y^{\operatorname{us}} \arrow[l, "\gamma", shift left] \arrow[r,"\beta"] & Y,
\end{tikzcd}
\] where $ \alpha  $, and $ \beta $ are the inclusions and $ \gamma $ is the projection. In this setup, Property $ (A) $ states that $ \gamma $ is a locally trivial bundle of affine spaces, and Property $ (L+) $ states that the derived restriction of the relative cotangent complex $ \alpha^*\mathbb L^\bullet_{Y^{\operatorname{us}}/Y} $ has non-negative weights.                           

\begin{exa}\label{ex:degreef}
	We consider the hypersurface case  of Example~\ref{ex:3.5}.  This is also studied in \cite[Example 2.5]{HL}. However, as a warning to the reader, the convention on weights in \textit{loc.\ cit.}\ appears to be the opposite of our work.
	Consider the cdga $ R = k[\mathbf x, \mathbf y, e] $, where $ d e = h $. The internal $ \mathbb Z $-grading is given by 
	\[
	\operatorname{deg} x_i >0,  \qquad \operatorname{deg} y_i < 0,
	\] as before. If we have the condition
	\[
	\dg h \leq 0,
	\] Lemma~\ref{lem:window} identifies the window subcategory explicitly.
	By comparison, Property $ (L+) $ and Property $ (A) $ of \cite{HL} can be satisfied either by imposing $ \dg h \geq 0 $ or by requiring that  $ h\!\mod \mathbf x $ is non-zero and linear in at least one of  the $ y_i $. Hence our condition on the degree appears to be almost complementary to the one imposed in \textit{loc.\ cit.}

\end{exa}

Now, let us  look at a specific case of Example~\ref{ex:degreef} with $\dg h \leq 0$.
\begin{exa}\label{ex:twopoints}
	We consider the cdga $ R = k[x_1,x_2,e] $ with $ de = x_1x_2 $ and internal $ \mathbb Z $-grading $ \dg x_1 = \dg x_2 =1 $.  Note that $ \dg e = 2 $ in this example, and hence Lemma~\ref{lem:window} does not apply. However, we wish to  understand the essential image of the  window functor $ \Phi_{Q_+} $ in this example. It is easy to do so by direct computation.  As we wish to compare our window to \cite{HL}, we also use the isomorphism 
	\begin{align}\label{eq:ex1}
	\Perf^{\mathbb G_m} (R) \cong \Perf^{\mathbb G_m} (k[x_1,x_2]/(x_1x_2)).
	\end{align}
		
	It suffices to find the image of $ j^* \mathcal R $ as $ j^* \mathcal R(-1) \cong j^* \mathcal R $. So, we compute $ \Phi_{Q_+}(j^* \mathcal R) $ as in Lemma~\ref{lem:window} to get
	\begin{align}\label{eq:ex}
	\Phi_{Q_+}(j^* \mathcal R) =( Q_+ \otimes \mathcal C_{X^+ \times X})_{(0,*)} = \left(k[x_1,x_2,x_1^{-1},e,u] \oplus k[x_1,x_2,x_2^{-1},e,u]\right)_{(0,*)}.
	\end{align} To get this, observe that the second (and only other) term of the \v{C}ech complex is 
	\[
	k[x_1,x_2,x_1^{-1}, x_2^{-1},e,u] \cong 0
	\] as $ de = x_1x_2 $ is inverted.
	
	Under the isomorphism~\ref{eq:ex1}, we can compute the right hand side of equation~\ref{eq:ex} to get
	\begin{align*}
	\left(k[x_1,x_2,x_1^{-1},u]/(x_1x_2) \oplus k[x_1,x_2,x_2^{-1},u]/(x_1x_2)\right)_{(0,*)} &\cong \left(k[x_1,x_1^{-1},u] \oplus k[x_2,x_2^{-1},u]\right)_{(0,*)}\\ & \cong k[ux_1] \oplus k[ux_2],
	\end{align*}  where we view the results of the calculation using the $ \sigma $-module structure. Hence we see that our window is generated by the objects $ k[x_1] $ and $ k[x_2] $. 
	
	\noindent Finally, one can check that the window subcategory as defined in \cite{HL} is generated by $ k[x_1] $ and $ k[x_2] $, and hence the windows coincide.

\end{exa}

\subsection{Windows in general}\label{sec:gen}

Now, we consider the general case (for us), where $ T $ is a finitely generated smooth $ \mathbb Z $-graded ring over a \textit{field} $ k $. We need to restrict to a field $ k $ in order to use the Luna slice theorem. We recall the following result from \cite{bdf} for (non-dg) $ k $-algebras.

\begin{lem}\label{lem:QPsmooth}
	For any smooth finitely generated $ \mathbb Z $-graded  $ k $-algebra  $ T  $, $ Q(T) $ satisfies Property $ P $ \ie
	\[
		\rho_T : \left(Q(T) \otimes^L Q(T)\right)_0 \cong Q(T)
	\]
\end{lem}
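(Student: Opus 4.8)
The plan is to reduce the general smooth case to the linear model of Example~\ref{ex:affineT}, for which Property $P$ has already been established in Lemma~\ref{lem:QPaffine}, by means of the Luna \'etale slice theorem (this is why we restrict to a field). The first point is that the assertion ``$\rho_T$ is an isomorphism'' is local on $X = \Spec T$ in the \'etale topology. Indeed, taking the middle degree-zero piece $(-)_0$ is exact (extraction of a graded summand), the derived tensor product $\sopl$ commutes with flat base change, and the $Q$-construction is compatible with equivariant flat base change $T \to T'$ in the sense that $Q(T')\cong Q(T)\otimes_T T'$ (a consequence of the functoriality and base-change properties of $Q$ recalled in Section~\ref{sec:Qa}). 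Hence $\rho_{T'}$ is the base change of $\rho_T$, and for an equivariant \'etale cover $\{T\to T_i\}$ it suffices to check each $\rho_{T_i}$. Crucially, both $(Q(T)\sopl Q(T))_0$ and $Q(T)$ are supported, as sheaves on $X\times X$ (via $p\otimes s$), on the locus of pairs $(x_1,x_2)$ whose orbit closures share a common point: a direct check on the linear model shows the image of $\Spec Q(T)$ degenerates, at $u=0$, to pairs that each flow to a common fixed point. Since Luna's neighborhoods are saturated (stable under this degeneration), the diagonal charts $U_i\times U_i$ already cover this support, so diagonal base change does detect whether $\rho_T$ is an isomorphism.

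Next I would invoke Luna's slice theorem to produce the cover. Since $T$ is smooth, the theorem yields an equivariant \'etale cover of $X$ by charts of two types. Near a fixed point $z\in Z=X^{\mathbb G_m}$, a saturated neighborhood is modeled \'etale-equivariantly by the linear action of $\mathbb G_m$ on the tangent representation $T_zX$; that is, by an open of an affine space $\Spec k[\mathbf x,\mathbf y]$ with linear action exactly as in Example~\ref{ex:affineT}. On such a chart, the locality of the first paragraph reduces the verification of $\rho$ to the linear model, which is precisely Lemma~\ref{lem:QPaffine}. Near a closed orbit disjoint from $Z$, the slice has trivial stabilizer, so the chart carries an invertible function of nonzero internal degree (the coordinate on the $\mathbb G_m$-factor); there Lemma~\ref{lem:Qdelta} identifies $Q(T)$ with the diagonal $\Delta(T)$ (on the module side dictated by the sign of the degree), and $\rho$ becomes the canonical identification $(\Delta\sopl\Delta)_0\cong\Delta$ expressing that the identity Fourier--Mukai kernel is idempotent under convolution, which is manifestly an isomorphism. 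These two families of charts cover $X$, completing the reduction.

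The main obstacle is the base-change compatibility underlying the first paragraph: one must verify carefully that forming $Q$, taking the derived tensor product $\sopl$, and passing to middle degree-zero invariants all commute with the equivariant \'etale maps supplied by Luna's theorem, and that on a fixed-point chart the linearization genuinely matches the $Q(k[\mathbf x,\mathbf y])$ of Example~\ref{ex:affineT} (matching the internal $\mathbb Z$-grading and the $p$- and $s$-module structures). The delicate feature is that the $Q$-construction lives inside $T[u,u^{-1}]$ and is \emph{not} a priori Zariski-local on $X$; establishing that it nonetheless behaves well under \'etale localization, so that the isomorphism can be tested chart by chart, is where the real work lies. Once this compatibility and the support statement are in hand, the remaining steps are formal, the genuinely new input being the two local computations already supplied by Lemmas~\ref{lem:QPaffine} and~\ref{lem:Qdelta}.
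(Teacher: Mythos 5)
Your proposal follows essentially the same route as the paper: the paper's proof of Lemma~\ref{lem:QPsmooth} is a two-line citation of \cite[Lemma 4.2.5, Proposition 4.2.6]{bdf}, saying only that one uses the Luna slice theorem to reduce to the case of affine space, which is exactly the strategy you outline (\'etale-local reduction to Lemma~\ref{lem:QPaffine} on saturated charts around fixed points, and to Lemma~\ref{lem:Qdelta} away from the fixed locus). The base-change and support compatibilities that you correctly flag as ``where the real work lies'' are precisely the content the paper delegates to \emph{loc.\ cit.}
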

\begin{proof}
	The proof uses the Luna slice theorem to reduce to the case of affine space. See \cite[Lemma 4.2.5, Proposition 4.2.6]{bdf}.
\end{proof}

\begin{lem}\label{lem:QP} When $ T $ is a finitely generated smooth $ \mathbb Z $-graded ring over a field $ k $,  the object $ Q(R) $ satisfies property $ P $, \ie the  morphism  \[ \rho : \left(Q(R)   \sopl Q(R)\right)_0 \to  Q(R)	,		\] is an isomorphism.
\end{lem}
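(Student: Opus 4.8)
The plan is to reduce this to the affine-space case already settled in Lemma~\ref{lem:QPaffine}, exactly paralleling the reduction used for the non-dg statement in Lemma~\ref{lem:QPsmooth}. Since being an isomorphism is a local condition, I would check that $\rho$ is an isomorphism after restricting to a suitable cover of $X$, using that the formation of $Q(R)$, the derived tensor product $\sopl$, and the passage to middle internal-degree-zero invariants $(-)_0$ are all compatible with flat (in particular \'etale) base change on $T$. Compatibility of the $Q$-construction with base change is recorded in the introduction, and the behaviour of $\eta\colon Q(R)\hookrightarrow\Delta(R)$ under localization is Lemma~\ref{lem:Qdelta}.

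For the two pieces of the cover: away from the $\mathbb G_m$-fixed locus some homogeneous element $t\in T$ of nonzero internal degree is invertible, so by Lemma~\ref{lem:Qdelta} the map $\eta$ becomes an isomorphism after localizing at $t$; there $Q(R)\sopl Q(R)\simeq\Delta(R)\sopl\Delta(R)$ computes the identity kernel and $\rho$ is an isomorphism for formal reasons. Near a fixed point I would invoke the Luna slice theorem to linearize the $\mathbb G_m$-action, so that \'etale-locally $T\cong k[\mathbf x,\mathbf y]$ with a linear action and $X$ becomes affine space. Because $R=T[\mathbf e,\mathbf f]$ is semi-free over $T$ and the generators $\mathbf e,\mathbf f$ are carried along unchanged by the slice, after this base change $R$ acquires exactly the shape to which Lemma~\ref{lem:QPaffine} applies, and that lemma gives that $\rho$ is an isomorphism on the slice.

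Equivalently, and perhaps more transparently, one can package the argument as a factorization. Using $Q(R)=Q(T)[\mathbf e,\mathbf g]$ from~\eqref{eq:Q}, I would build a semi-free resolution of $Q(R)$ over $R\otimes R$ by combining a semi-free resolution of $Q(T)$ over $T\otimes T$ with the explicit Koszul variables $\boldsymbol\lambda,\boldsymbol\nu$ that resolve the extra generators $\mathbf e,\mathbf g$ as in the proof of Lemma~\ref{lem:QPaffine}. Computing $Q(R)\sopl Q(R)$ with this resolution, the $Q(T)$-factor contributes $\left(Q(T)\otimes^L Q(T)\right)_0\cong Q(T)$ by Lemma~\ref{lem:QPsmooth}, while solving out $\boldsymbol\lambda,\boldsymbol\nu$ adjoins precisely the free variables $\mathbf e^1,\mathbf g$; taking middle internal-degree-zero invariants then yields $Q(T)[\mathbf e,\mathbf g]=Q(R)$.

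The main obstacle is the base-change and factorization bookkeeping rather than any new idea: one must verify that forming $\sopl$ and then taking $(-)_0$ commutes with the Luna-slice base change (respectively, with freely adjoining the extra generators), and, crucially, that under this identification the resulting isomorphism is genuinely the one induced by $\rho$ of~\eqref{eq:rho} and not merely an abstract isomorphism. The commutation of $(-)_0$ with the derived tensor product is secured by the flatness and semi-freeness hypotheses already in force, and the compatibility with $\rho$ is confirmed by tracking the maps $p,s$ through the reduction exactly as in the proof of Lemma~\ref{lem:QPaffine}.
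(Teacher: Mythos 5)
Your second, ``factorization'' formulation is essentially the paper's own proof: the paper takes a semi-free resolution $K$ of $Q(T)$ over $T\otimes T$, notes that $K[\mathbf e,\mathbf g]$ then resolves $Q(R)$, identifies $\left(Q(R)\sopl Q(R)\right)_0$ with $\left(Q(T)\otimes^L Q(T)\right)_0[\mathbf e,\mathbf g']$ because the new generators have middle degree zero, and concludes by citing Lemma~\ref{lem:QPsmooth}, finally checking that the composite is induced by $\rho$. The Luna-slice cover argument you sketch first is not what the paper does --- all \'etale-local work is confined to the non-dg statement Lemma~\ref{lem:QPsmooth}, which is used as a black box --- but since you present that only as an alternative and your factorization route matches the paper's argument step for step, the proposal is correct and takes essentially the same approach.
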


\begin{proof}

	 	Consider a semi-free resolution $ K $ of $ Q(T) $ as a $ T \otimes T $-module. This provides a semi-free resolution $ K[\mathbf e, \mathbf g] $ of $ Q(R) $ and we can compute the derived tensor product,
	\begin{align*}
	Q(R) \sopl Q(R)&\cong \left(K[\mathbf e, \mathbf g] \sop  Q(T)[\mathbf e', \mathbf g']\right)\\
	&\cong K \otimes_{{}^sT^p} Q(T) [\mathbf e, \mathbf g']\\
	&\cong Q(T) \otimes_{{}^sT^p}^L Q(T) [\mathbf e, \mathbf g'].
	\end{align*} 

Hence
	\begin{align*}
	\left( Q(R) \sopl Q(R)\right)_0 & \cong \left(Q(T) \otimes_{{}^sT^p}^L Q(T)\right)_0 [\mathbf e, \mathbf g'] & \text{ since }\mathbf e, \mathbf g' \text{ have middle degree }0\\
	& \cong Q(T)[\mathbf e, \mathbf g'] & \text{ by Lemma~\ref{lem:QPsmooth}} \\
	& \cong Q(R). &
		\end{align*}		 
	 Tracing through the chain of maps, we see that the map inducing the isomorphism is $ \rho $.

\end{proof}

\subsubsection{Defining windows}
Let us recall the definition of windows in the  setting of ordinary schemes as in \cite{bdf} now. Let $ T $ be a $ \mathbb Z $-graded smooth ring over a field $ k $. Let $ \mu_\pm $ be the sum of the weights of the conormal bundle of $ \operatorname{Spec} T/J^\pm $ in $ \operatorname{Spec} T = X $, where $ J^\pm $ is the ideal generated by all the positive/negative-ly graded elements. For simplicity, we assume that the fixed locus $ V(J^+) \cap V(J^-) $ is connected. If not, we need to define different $ \mu_\pm $ for different connected components.

Then we define the grade restriction window $ \mathbb{W}^+_X  $ to be the full subcategory of $ \operatorname{D}^b(\operatorname{QCoh}^{\mathbb{G}_m}X) $ generated by objects  $ A  $ such that for any fixed point $ y \in  X $, and some affine \'etale neighborhood $ V = \operatorname{Spec} S $ in $ X = \operatorname{Spec} T $,  the restriction 
\[
A \otimes_{T} S 
\] is generated by $ S(i) $ for $ i \in (-\mu_{+},0] $.  Notice that this definition of the window matches the one from Lemma~\ref{lem:window} in the case where $ X $ is affine space.

In our setting of dg-schemes we will view the scheme $ X = \Spec T $ as a dg-scheme $ (X, \mathcal O_X) $, but by abuse of notation denote it by just $ X $. We use $ j' $ for the following inclusion:
\[
j' : (X^+,\mathcal{O}_{X^+}) \hookrightarrow (X,\mathcal O_X).
\]

For the dg-scheme  $ \mathbf X = (X, \mathcal R) $ that we are interested in, we will define the window $ \mathbb{W}^+_{\mathbf X} $ as the pullback of the window $ \mathbb{W}^+_X $ under the forgetful morphism. Just as in the case of ordinary schemes, we assume that the fixed locus is connected for simplicity.  
In more detail, we have the following.
\begin{defin} \label{defin: window}
Let
\[
f : \X = (X,\mathcal R) \to X = (X, \mathcal O_X) 
\]
 be the forgetful morphism.
The \emph{grade restriction window}
 $ \mathbb{W}^+_{\mathbf X} $ to be the full subcategory of $ \operatorname{Perf}^{\mathbb{G}_m}(\mathbf X) $ generated by objects $ f^*A $ with $A \in \Perf^{\mathbb G_m}(X)$ such that for any fixed point $ y \in  X $, and some affine \'etale neighborhood $ V = \operatorname{Spec} S $ in $ X = \operatorname{Spec} T $,  the restriction 
\[
A \otimes_{T} S 
\] is generated by $ S(i) $ for $ i \in (-\mu_{+},0] $  i.e.\
\[
\mathbb{W}^+_\mathbb{\mathbf X} := \langle f^*A ,  A \text{ in } \mathbb{W}^+_X \rangle.
\]

 \end{defin}
 
 We also introduce the forgetful morphism on the semi-stable locus
 \[
 f^+ : \X^+ \to X^+, 
 \] and the forgetful morphism
 \[
 \alpha := f^+ \times f : \X^+ \times \X \to X^+ \times X
 \]

Let us reiterate that we will denote the dg-scheme $ (X,\mathcal O_X) $  by just $ X $, as opposed to the dg-scheme $ \X = (X,\mathcal R) $. The following commutative diagrams summarize the notation:
\[\begin{tikzcd}
& \X^+ \times \X \arrow[r,  "\alpha" ] \arrow[ld,"{\pi_{1}}", swap] \arrow[rd,"\pi_{1}'", near start, swap ] & X^+ \times X \arrow[ld,"{\pi}_{2}", near start, ] \arrow[rd,"{\pi_{2}'}",  ] & \\
\X^+ \arrow[r, "f^{+}" ] & X^+ & \X \arrow[r,"f"]& X, 
\end{tikzcd}
\]
and 
\[\begin{tikzcd}
\X^+ \arrow[r, hook, "j"] \arrow[d, "f^{+}" ]& \X \arrow[d,"f"] \\
X^+ \arrow[r,hook,"j'"]& X.
\end{tikzcd}
\]

The following result about base change  for $ Q $ under the forgetful morphism $ f : \X \to X $ will help us compute the window $ \mathbb W_\X^+ $.

\begin{lem}\label{lem:Qfcomm}
	Assuming that the degrees of the generators $ f_i $ of $ T \to R $ are non-positive, i.e., $ \operatorname{deg} f_i \leq 0 $, the following is an isomorphism of functors
	\[
	\Phi_{Q(R)} \circ f^* \cong f^* \circ \Phi_{Q(T)},
	\] where $ Q(R) $ and $ Q(T) $ are considered with the $ p \otimes s $-module structure.
\end{lem}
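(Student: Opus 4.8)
The plan is to reduce everything to an explicit computation of modules, since all the dg-schemes in sight are affine, and then to exploit the explicit description $Q(R) = Q(T)[\mathbf g]$ from equation~\eqref{eq:Q}. First I would record the module-level formulas for the two functors exactly as in the proof of Lemma~\ref{lem:window}. Writing $A$ for an object of $D(\QCoh^{\mathbb G_m} X)$ represented by a graded dg-$T$-module, the pushforward in $\Phi_{Q(T)}$ is exact because both factors are affine, so $\Phi_{Q(T)}(A) \cong (Q(T) \otimes^L_{T, p_T} A)_{(0,*)}$ with the output $T$-module structure carried through $s_T$; likewise $\Phi_{Q(R)}(B) \cong (Q(R) \otimes^L_{R, p} B)_{(0,*)}$ with output $R$-structure through $s$. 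Finally $f^*(-) = R \otimes_T (-)$ is exact since $R = T[\mathbf f]$ is free, hence flat, over $T$.

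Next I would compute $\Phi_{Q(R)}(f^* A)$. Since $f^* A = R \otimes_T A$ and $Q(R) \otimes^L_{R, p} R \cong Q(R)$, associativity of the derived tensor product gives $Q(R) \otimes^L_{R,p} f^* A \cong Q(R) \otimes^L_{T, p_T} A$, where $T$ acts on $Q(R)$ through $p_T$. Under the hypothesis that $R$ has no positive generators $\mathbf e$, equation~\eqref{eq:Q} reads $Q(R) = Q(T)[\mathbf g]$ with $\operatorname{bi-deg} g_i = (0, \deg f_i)$, and $k[\mathbf g]$ is free over $k$, so $Q(R) \otimes^L_{T,p_T} A \cong (Q(T) \otimes^L_{T, p_T} A)[\mathbf g]$. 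The crucial point is now the degree hypothesis $\deg f_i \le 0$: it guarantees that each $g_i$ has first-slot (middle) internal degree $0$, so that taking middle-degree-zero invariants commutes with adjoining the polynomial variables $\mathbf g$. Hence $\left(Q(R) \otimes^L_{T,p_T} A\right)_{(0,*)} \cong (Q(T) \otimes^L_{T,p_T} A)_{(0,*)}[\mathbf g] = \Phi_{Q(T)}(A)[\mathbf g]$.

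It then remains to identify $\Phi_{Q(T)}(A)[\mathbf g]$ with $f^* \Phi_{Q(T)}(A) = \Phi_{Q(T)}(A)[\mathbf f]$ via $g_i \leftrightarrow f_i$. On generators this is degree-preserving, since $\deg g_i = \deg f_i$ in the output (second) slot. For the module and dg structures I would use that the output $R$-action is the $s$-structure and that $s : R \to Q(R)$ is a morphism of cdgas: the action of $f_i$ through $s$ is multiplication by $s(f_i) = g_i$, and $s(d f_i) = d\, s(f_i) = d g_i$, so the differential and the $R$-action transport correctly under $g_i \leftrightarrow f_i$. This yields the isomorphism $\Phi_{Q(R)}(f^* A) \cong f^* \Phi_{Q(T)}(A)$, and tracing the construction shows it is natural in $A$, giving the claimed isomorphism of functors.

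The main obstacle I expect is the careful bookkeeping of the $\mathbb{Z} \times \mathbb{Z}$-equivariant grading together with the differential: verifying that the underived tensors compute the derived ones (flatness of $R$ over $T$ and of $k[\mathbf g]$ over $k$), and above all pinning down the precise role of the hypothesis $\deg f_i \le 0$. Indeed, had $R$ possessed a positive generator $e_i$, then by equation~\eqref{eq:Q} the corresponding variable of $Q(R)$ would carry bi-degree $(\deg e_i, 0)$ with nonzero first slot; adjoining it would no longer commute with the middle-degree-zero invariants, and the identity would fail. Thus the bulk of the work is confirming that under the stated hypothesis every generator of $Q(R)$ over $Q(T)$ sits in middle degree $0$, which is exactly what allows the $Q(T)$-computation to pass through the forgetful pullback.
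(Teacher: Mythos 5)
Your proposal is correct and takes essentially the same route as the paper: the paper's proof first establishes the kernel-level isomorphism $Q(T)\,{}_s\!\otimes_T R \cong Q(T)[\mathbf g] = Q(R)$ (your identification $g_i \leftrightarrow f_i$, including the check that $s(df_i) = dg_i$) and then passes the factor $R$, sitting in bidegree $(0,*)$, through the first-slot $\mathbb G_m$-invariants exactly as you do with $[\mathbf g]$. The only cosmetic difference is that the paper performs the $g_i \leftrightarrow f_i$ identification at the level of the kernel before running the Fourier--Mukai computation, whereas you do it after.
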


\begin{proof}

	We claim that the  following morphism of $ \mathbb Z \times \mathbb Z $-graded $ R\otimes R $-dg-modules
	\begin{equation}
	\begin{aligned}[c]
	 Q(T) { }_s\!\otimes_T R &\to Q(T)[\mathbf g] \\
	q \otimes 1 &\mapsto q \\
	1\otimes f_i &\mapsto g_i \\
	1 \otimes t &\mapsto s(t),
	\end{aligned}
	\end{equation} where $ q \in Q(T), f_i \in R $ and $ t \in T \subset R $, is an isomorphism.

	If we forget the dg-structure, it is clear that the map is surjective and injective, and respects the grading when we view $ R  $ in $ Q(T) { }_s\!\otimes_T R $ with the grading $ (0,*) $. The morphism also respects the  dg-structure; if $ d f_i $ is in $ T $,
	\[
	d (1 \otimes f_i)  = 1 \otimes df_i = s(df_i) \otimes 1 \mapsto s(df_i) = dg_i.
	\] This proves the isomorphism
	\[
	Q(T) _s\!\otimes_T R \cong Q(R).
	\]
	
	Now, let $ M  $ be an object of $ D(X) $. Then
	\begin{align*}
		\Phi_{Q(R)} \circ f^* (M) &= \pi_{2*}( (M \otimes_T R) \otimes_{R} {}_pQ(R))^{\mathbb G_m} \\
		&=  \pi_{2*}((M \otimes_T R) \otimes_R {}_p(Q(T)_s\!\otimes_T R))^{\mathbb G_m} \\
		&= \pi_{2*}( (M \otimes_T {}_pQ(T)_s\!)\otimes_T R)^{\mathbb G_m} \\
		&= \pi_{2*}( (M \otimes_T {}_pQ(T)))^{\mathbb G_m} {}_s\!\otimes_T R \\
		&= f^* \circ \Phi_{Q(T)}(M)
	\end{align*} To get the second to last line of the above chain of isomorphisms, we use that $ R $ has grading $ (0, *) $ and that the  $ \mathbb G_m $-invariants is with respect to the first grading.
\end{proof}

We recall one of the main results of \cite{bdf}.
\begin{lem}[\!\!{\cite[Theorem 4.2.9]{bdf}}] \label{lem:bdfequiv} When $ X $ is a smooth scheme over a field $ k $, the following  
	\[\begin{tikzcd}
	\Phi_{Q(T)_+} : D^b(X\sslash{+})  \arrow[r, shift right] & \arrow[l, shift right] \mathbb{W}^+_X : {j'}^*
	\end{tikzcd}	
	\] is an equivalence of categories. 
\end{lem}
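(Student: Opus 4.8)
The plan is to prove this by establishing three facts and combining them formally: that $\Phi_{Q(T)_+}$ is fully faithful, that ${j'}^*$ is a one-sided inverse to it, and that its essential image is exactly $\mathbb{W}^+_X$. Once all three hold, $\Phi_{Q(T)_+}$ is a fully faithful functor that is essentially surjective onto $\mathbb{W}^+_X$, hence an equivalence, and a one-sided inverse between equivalent categories is automatically a two-sided inverse, so ${j'}^*|_{\mathbb{W}^+_X}$ inverts it.

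For fully faithfulness, I would specialize the dg machinery to the non-dg situation $\mathcal{A}=\mathcal{O}_X$, i.e.\ $R=T$ and $\X = X$. Since $T$ is smooth and finitely generated over the field $k$, Lemma~\ref{lem:QPsmooth} shows $Q(T)$ satisfies Property $P$, so Lemma~\ref{lem:FF} produces the weak semi-orthogonal decomposition of $D(\QCoh^{\mathbb{G}_m}X)$ whose middle piece is $\operatorname{Im}\Phi_{Q(T)_+}$; in particular $\Phi_{Q(T)_+}\colon D(\QCoh^{\mathbb{G}_m}X^+)\to D(\QCoh^{\mathbb{G}_m}X)$ is fully faithful, and restricts to a fully faithful functor on the bounded subcategory $D^b(X\sslash{+})$ (which, $X$ being smooth, agrees with the perfect subcategory). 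The retraction is then immediate from Lemma~\ref{lem:faithfulness} with $R=T$: the composite ${j'}^*\circ\Phi_{Q(T)_+}$ is naturally isomorphic to $\operatorname{Id}_{D^b(X\sslash{+})}$, so ${j'}^*$ is a left inverse.

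It remains to identify $\operatorname{Im}\Phi_{Q(T)_+}$ with $\mathbb{W}^+_X$. For $X$ affine space this is Lemma~\ref{lem:window}: when $R=T$ there are no homological generators $f_i$, so the hypothesis $\dg f_i\le 0$ is vacuous, and the lemma asserts that the image is generated by $\mathcal{O}_X(i)$ for $i\in(-\mu_+,0]$, which is precisely the definition of $\mathbb{W}^+_X$ in the affine case. For a general smooth $X$, I would invoke the étale-local description of the window (Definition~\ref{defin: window} and the preceding discussion): membership in $\mathbb{W}^+_X$ is tested on an affine \'etale neighborhood $V=\Spec S$ of each fixed point. Here the field hypothesis enters through the Luna slice theorem, which presents such a neighborhood as \'etale over the normal space at the fixed point, i.e.\ over an affine space carrying the linearized $\mathbb{G}_m$-action with weights summing to $\mu_+$. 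Combining this with the base-change compatibility of $Q$ (as used in the proof of Lemma~\ref{lem:QPsmooth} and in \cite[Lemma 4.2.5, Proposition 4.2.6]{bdf}) reduces the computation of $\operatorname{Im}\Phi_{Q(T)_+}$ \'etale-locally to the affine-space case already settled, and since generation of the image is an \'etale-local condition these identifications glue to give $\operatorname{Im}\Phi_{Q(T)_+}=\mathbb{W}^+_X$.

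The main obstacle is precisely this last step: verifying that forming $Q$ commutes with \'etale base change and that the identification of the essential image descends and glues along the Luna slices. Everything else is either formal (the fully faithful/retraction argument collapsing two composites onto the identity) or a direct computation on affine space (Lemma~\ref{lem:window}); the genuine geometric content, and the place where both smoothness of $X$ and the restriction to a field are indispensable, is the reduction of the global window statement to its local affine model via the slice theorem.
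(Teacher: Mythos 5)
The paper offers no proof of this statement at all: it is quoted as \cite[Theorem 4.2.9]{bdf} and used as a black box (its only role is as an input to Lemma~\ref{lem:windowgen}). So there is no internal argument to compare against, and what you have written is a reconstruction of the \cite{bdf} proof using the present paper's machinery specialized to $R=T$. As an outline it is sound and non-circular: Property $P$ for smooth $T$ (Lemma~\ref{lem:QPsmooth}, itself imported from \cite{bdf}) combined with Lemma~\ref{lem:FF} gives full faithfulness; Lemma~\ref{lem:faithfulness} gives the retraction ${j'}^*\circ\Phi_{Q(T)_+}\cong\operatorname{Id}$; and Lemma~\ref{lem:window} with no dg generators identifies the essential image over affine space with the window. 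None of these results depend on Lemma~\ref{lem:bdfequiv}, so you may legitimately invoke them, and your observation that a one-sided inverse of an equivalence is automatically two-sided is correct.

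The caveat is that the step you yourself flag as the crux --- \'etale base change for $Q$ along Luna slices and the gluing of the local identifications --- is left at the level of an assertion, and it is precisely the content of \cite[Lemma 4.2.5--Theorem 4.2.9]{bdf}. Two points deserve emphasis there. First, this is where both hypotheses (smoothness of $X$ and $k$ a field) are genuinely consumed. Second, identifying the essential image with $\mathbb{W}^+_X$ has two containments: showing the image satisfies the \'etale-local grade-restriction condition, and showing every object of $\mathbb{W}^+_X$ is hit; the latter amounts to verifying $\Phi_{Q(T)}A\cong A$ for $A$ in the window (checked \'etale-locally via the unit of the Bousfield triangle), which your sketch subsumes under ``the identifications glue'' without isolating it. For a statement the paper itself outsources this is a reasonable level of detail, but a self-contained proof would need to supply that verification.
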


Finally, we show that our window functor $ \Phi_{Q_+} $ lands in the window $ \mathbb{W}^+_{\mathbf X} $. 

\begin{lem}\label{lem:windowgen}Assuming that the degrees of the generators $ f_i $ of $R$ as a $T$-algebra are non-positive, i.e., $ \operatorname{deg} f_i \leq 0 $, the image of $ \Phi_{Q_+} :  \operatorname{Perf}^{\mathbb G_m}(\X^+) \longrightarrow \operatorname{Perf}^{\mathbb G_m}( \mathbf X) $ lies in the window subcategory $ \mathbb{W}^+_{\X} \subset \operatorname{Perf}^{\mathbb G_m}(\X) $.

\end{lem}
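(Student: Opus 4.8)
The plan is to verify the containment on a set of generators and then transport the known result for $Q(T)$ through the forgetful morphism $f\colon\X\to X$. Since $X^+$ is quasi-affine, $\mathcal{O}_{X^+}$ is ample, so by Definition~\ref{def:perf} the category $\operatorname{Perf}^{\mathbb{G}_m}(\X^+)$ is generated by the objects $j^*\mathcal{R}(i)$ for $i\in\mathbb{Z}$. As $\Phi_{Q_+}$ is exact and $\mathbb{W}^+_{\X}$ is a triangulated subcategory, it suffices to show $\Phi_{Q_+}(j^*\mathcal{R}(i))\in\mathbb{W}^+_{\X}$ for every $i$ (note that, in contrast to the later identification of the window, no restriction on the range of $i$ is needed here). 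The first observation is that $\mathcal{R}(i)=f^*\mathcal{O}_X(i)$, so commutativity of the square relating $j,j',f,f^+$ gives
\[
j^*\mathcal{R}(i)=j^*f^*\mathcal{O}_X(i)=(f^+)^*(j')^*\mathcal{O}_X(i)=(f^+)^*\mathcal{O}_{X^+}(i),
\]
which reduces the problem to understanding $\Phi_{Q_+}\circ (f^+)^*$ on line bundles pulled back from $X^+$.

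The main step is to establish the semistable analogue of Lemma~\ref{lem:Qfcomm}, namely an isomorphism of functors
\[
\Phi_{Q_+}\circ (f^+)^*\;\cong\; f^*\circ\Phi_{Q(T)_+}.
\]
I would obtain this by restricting the kernel identification underlying Lemma~\ref{lem:Qfcomm} to the semistable locus. That proof produces the isomorphism $Q(T)\,{}_s\!\otimes_T R\cong Q(R)$ of $R\otimes R$-modules, valid precisely because $\deg f_i\leq 0$. Since $Q_+=(j\times\operatorname{Id})^*Q(R)$ and $Q(T)_+=(j'\times\operatorname{Id})^*Q(T)$ are the restrictions of these kernels along the open immersion on the first (the $p$-) factor, and since the forgetful morphisms are compatible with these open immersions via the Cartesian square above, the same identification localizes to relate $Q_+$ and $Q(T)_+$ over $X^+$. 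Feeding this into the same chain of isomorphisms as in Lemma~\ref{lem:Qfcomm}—the projection formula, taking $\mathbb{G}_m$-invariants with respect to the first grading, and using that the extra factor $R$ carries grading $(0,*)$—yields the displayed commutativity.

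With this in hand the conclusion is immediate:
\[
\Phi_{Q_+}(j^*\mathcal{R}(i))=\Phi_{Q_+}\bigl((f^+)^*\mathcal{O}_{X^+}(i)\bigr)\cong f^*\,\Phi_{Q(T)_+}(\mathcal{O}_{X^+}(i)).
\]
By Lemma~\ref{lem:bdfequiv} the object $\Phi_{Q(T)_+}(\mathcal{O}_{X^+}(i))$ lies in the window $\mathbb{W}^+_X$, so its forgetful pullback lies in $\langle f^*A : A\in\mathbb{W}^+_X\rangle=\mathbb{W}^+_{\X}$ by Definition~\ref{defin: window}, as required.

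The hard part will be the semistable commutativity of the second paragraph: Lemma~\ref{lem:Qfcomm} is stated over all of $\X$, and one must check carefully that its defining kernel isomorphism survives restriction to the open semistable locus and that the base-change and invariants manipulations respect the equivariant $\mathbb{Z}\times\mathbb{Z}$-gradings throughout. The degree hypothesis $\deg f_i\leq 0$ enters exactly here, as it does in Lemma~\ref{lem:Qfcomm}; without it the identification $Q(T)\,{}_s\!\otimes_T R\cong Q(R)$ fails and the two functors need not agree.
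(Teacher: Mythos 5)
Your proposal is correct and follows essentially the same route as the paper: both reduce to the key commutation $\Phi_{Q_+}\circ (f^+)^*\cong f^*\circ\Phi_{Q(T)_+}$ obtained from Lemma~\ref{lem:Qfcomm}, and then invoke Lemma~\ref{lem:bdfequiv} together with Definition~\ref{defin: window}. The only (immaterial) differences are that the paper derives the semistable commutation by precomposing Lemma~\ref{lem:Qfcomm} with $j'_*$ and using the projection formula and the base-change identity $f^*\circ j'_*\cong j_*\circ f^{+*}$, rather than restricting the kernel isomorphism $Q(T)\,{}_s\!\otimes_T R\cong Q(R)$ to the open locus as you do, and that it evaluates on generators of the form $f^{+*}j'^*A$ with $A\in\mathbb{W}^+_X$ rather than on the twists $(f^+)^*\mathcal{O}_{X^+}(i)$.
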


\begin{proof}

	We have the following isomorphisms:
	\begin{align*}
	f^* \circ \Phi_{Q(T)} \circ j'_*&\cong \Phi_{Q(R)} \circ f^* \circ j'_* \\
	&\cong  \Phi_{Q(R)} \circ j_* \circ f^{+*} \\
	&\cong \Phi_{Q(R)_+} \circ f^{+*},
	\end{align*} where we used the isomorphism of Lemma~\ref{lem:Qfcomm} in the first line. The second line follows using the projection formula,
	\begin{align*}
	j_* \circ f^{+*} (\mathcal F)&= j_*(j'^*R \otimes \mathcal F) \\
	&=  R \otimes j'_* \mathcal F \\ 
	&= f^* \circ j'_* (\mathcal F) .
	\end{align*}

	This gives us  the following isomorphism of functors.
	\[
	\Phi_{Q(R)_+} \circ f^{+*} \cong f^* \circ \Phi_{Q(T)_+}.
	\]
	
Clearly,  $ \operatorname{Perf}^{\mathbb G_m}(\X^+) $ is generated by the essential image of $ {f^{+}}^{*} $. Hence we only need to find the image of $ \Phi_{Q_+} $ on objects of the form $ {f^+}^*{j'}^* A $, where $ A $ is in $ \mathbb{W}^+_X $.

	Now,  if we consider an object $ A  $ in $ \mathbb W^+_X $,
	\begin{align*}
	\Phi_{Q(R)_+}(f^{+*}j'^*A) &\cong f^* \circ \Phi_{Q(T)_+}( j'^* A) \\
	&\cong f^* A,
	\end{align*} where we used Lemma~\ref{lem:bdfequiv} in the last line.
	
	This shows that the essential  image of $ \Phi_{Q(R)_+} $ lands in the window $ \mathbb W^+_\X $.
\end{proof}

The previous statement can  now be upgraded to the following theorem.

\begin{thm}\label{thm:Qequiv}
Assume that $R$ is generated as a $T$-algebra by non-positive elements. The functor
	\[
	\Phi_{Q_+}  : \operatorname{Perf}^{\mathbb G_m}(\X^+) \longrightarrow \mathbb{W}^+_{\X}
	\] is an equivalence of categories with inverse functor $ j^*  $.
\end{thm}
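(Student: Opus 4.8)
The plan is to assemble the theorem from the lemmas already in place, with the one genuinely new ingredient being essential surjectivity onto $\mathbb{W}^+_{\X}$. First I would record fully faithfulness: since $T$ is smooth and finitely generated over the field $k$, Lemma~\ref{lem:QP} guarantees that $Q(R)$ satisfies Property $P$, so Lemma~\ref{lem:FF} applies and $\Phi_{Q_+}$ is fully faithful. By Lemma~\ref{lem:windowgen} (which uses the hypothesis that the $T$-algebra generators of $R$ have non-positive internal degree), the essential image of $\Phi_{Q_+}$ is contained in $\mathbb{W}^+_{\X}$, so it remains only to upgrade this containment to an equality.

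For essential surjectivity I would exploit the base-change compatibility established inside the proof of Lemma~\ref{lem:windowgen}, namely the natural isomorphism
\[
\Phi_{Q(R)_+} \circ f^{+*} \cong f^* \circ \Phi_{Q(T)_+},
\]
together with the non-dg equivalence of Lemma~\ref{lem:bdfequiv}, which identifies $\Phi_{Q(T)_+} : \operatorname{Perf}^{\mathbb{G}_m}(X^+) \to \mathbb{W}^+_X$ as an equivalence. By Definition~\ref{defin: window}, $\mathbb{W}^+_{\X}$ is the thick subcategory of $\operatorname{Perf}^{\mathbb{G}_m}(\X)$ generated by the objects $f^*A$ with $A$ in $\mathbb{W}^+_X$. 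Given such an $A$, Lemma~\ref{lem:bdfequiv} produces $B$ in $\operatorname{Perf}^{\mathbb{G}_m}(X^+)$ with $A \cong \Phi_{Q(T)_+}(B)$, and then the displayed isomorphism gives $f^*A \cong f^*\Phi_{Q(T)_+}(B) \cong \Phi_{Q_+}(f^{+*}B)$. Thus every generator of $\mathbb{W}^+_{\X}$ lies in the essential image of $\Phi_{Q_+}$. Since $\Phi_{Q_+}$ is exact and fully faithful with idempotent-complete source $\operatorname{Perf}^{\mathbb{G}_m}(\X^+)$ (it is a thick subcategory by Definition~\ref{def:perf}), its essential image is a thick triangulated subcategory; containing all the generators, it must therefore contain, hence equal, $\mathbb{W}^+_{\X}$.

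Finally, Lemma~\ref{lem:faithfulness} supplies a natural isomorphism $j^* \circ \Phi_{Q_+} \cong \operatorname{Id}$. Since $\Phi_{Q_+}$ is now known to be an equivalence onto $\mathbb{W}^+_{\X}$, this identity exhibits the restriction of $j^*$ to $\mathbb{W}^+_{\X}$ as a left inverse, and a left inverse of an equivalence is automatically its two-sided inverse; thus $j^*$ is the inverse functor, as claimed.

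The step I expect to require the most care is the essential-surjectivity argument, and specifically the reduction to generators: one must verify that the base-change isomorphism $\Phi_{Q(R)_+}\circ f^{+*}\cong f^*\circ\Phi_{Q(T)_+}$ is genuinely natural (not merely an object-level identification), and that $\operatorname{Perf}^{\mathbb{G}_m}(X^+)$ is generated by objects whose images under the equivalence $\Phi_{Q(T)_+}$ generate $\mathbb{W}^+_X$, so that passing to thick closures on both sides is legitimate. Everything else is a formal consequence of the cited lemmas.
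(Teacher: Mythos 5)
Your proposal is correct and follows essentially the same route as the paper: full faithfulness from Lemma~\ref{lem:QP} together with Lemma~\ref{lem:FF} (and Lemma~\ref{lem:faithfulness}), and essential surjectivity by running the computation inside the proof of Lemma~\ref{lem:windowgen} --- namely $\Phi_{Q_+}(f^{+*}j'^*A)\cong f^*\Phi_{Q(T)_+}(j'^*A)\cong f^*A$ via Lemma~\ref{lem:bdfequiv} --- to hit the generators $f^*A$ of $\mathbb{W}^+_{\X}$, with $j^*$ identified as the inverse exactly as in Lemma~\ref{lem:faithfulness}. Your explicit thick-closure step and the naturality caveat are details the paper leaves implicit, but there is no substantive difference in approach.
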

\begin{proof}
	The fullness follows from Lemma~\ref{lem:FF} using Lemma~\ref{lem:QP}.  The faithfulness was proved in Lemma~\ref{lem:faithfulness}. Lemma~\ref{lem:windowgen} implies that a set of generators of $ \mathbb{W}^+_{\X} $ lies in the essential image of $ \Phi_{Q_+}$. This shows that $ \Phi_{Q_+}  $ is an equivalence.
	The inverse functor is $ j^* $ by the proof of Lemma~\ref{lem:windowgen}.
\end{proof}

Similarly, we can define the negative window functor and the negative window for the negative GIT quotient, when the generators $ e_i $ of $ T \to R $ are non-negative.

\begin{thm}\label{thm:Qequivn}
Assume that $R$ is generated as a $T$-algebra by non-negative elements.  The functor
	\[
	\Phi_{Q_-}  : \operatorname{Perf}^{\mathbb G_m}(\X^-) \longrightarrow \mathbb{W}^-_{\X}
	\] is an equivalence of categories with inverse functor $ j_{-}^*  $, where $ j_{-} : \X^- \hookrightarrow \X $ is the inclusion of dg-schemes.
\end{thm}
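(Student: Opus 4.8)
The plan is to deduce the statement from Theorem~\ref{thm:Qequiv} by means of the symmetry that interchanges the two directions of the $\mathbb{G}_m$-action. Precisely, I would precompose the action on $\X$ with the inversion automorphism $\iota : \mathbb{G}_m \to \mathbb{G}_m$, $\lambda \mapsto \lambda^{-1}$, obtaining a $\mathbb{G}_m$-equivariant dg-scheme $\X'$ with the same underlying data but with internal grading negated. Regrading by $\iota$ is an equivalence $D(\QCoh^{\mathbb{G}_m}\X) \cong D(\QCoh^{\mathbb{G}_m}\X')$ that sends $\mathcal{R}(i)$ to $\mathcal{R}(-i)$ and restricts to an equivalence on perfect objects, and it commutes with the forgetful morphism $f$.

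First I would check that $\iota$ matches up all the ingredients of the two theorems. The ideal $J^-$ of negatively graded elements of $T$ becomes the ideal of positively graded elements for $\X'$, so $\iota$ carries the negative semistable locus $\X^-$ to the positive semistable locus $(\X')^+$; likewise the kernel $Q_-$ is identified with the positive kernel of $\X'$, and the invariants are exchanged up to sign, $\mu_+' = -\mu_-$ and $\mu_-' = -\mu_+$. In particular the hypothesis that $R$ is generated as a $T$-algebra by non-negative elements becomes the hypothesis that it is generated by non-positive elements for $\X'$, which is exactly the assumption of Theorem~\ref{thm:Qequiv}.

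The one genuine bookkeeping step is to verify that $\iota$ carries $\mathbb{W}^-_{\X}$ to $\mathbb{W}^+_{\X'}$. By Definition~\ref{defin: window} this reduces to the underlying scheme, where an object generated \'etale-locally by $S(j)$ for $j$ in $[0,-\mu_-)$ is sent under $S(j) \mapsto S(-j)$ to one generated by $S(i)$ for $i$ in $(\mu_-,0] = (-\mu_+',0]$, which is exactly the range defining $\mathbb{W}^+_{\X'}$. Granting this, Theorem~\ref{thm:Qequiv} applied to $\X'$ gives that the positive window functor of $\X'$ is an equivalence onto $\mathbb{W}^+_{\X'}$ with inverse the restriction along the semistable inclusion; transporting back along $\iota$ produces the equivalence $\Phi_{Q_-} : \operatorname{Perf}^{\mathbb{G}_m}(\X^-) \to \mathbb{W}^-_{\X}$ with inverse $j_-^*$.

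I expect the main difficulty to be purely notational rather than conceptual: namely, confirming that the half-open window interval flips to the complementary half-open interval $[0,-\mu_-)$ under negation of weights, and that $Q_-$ really coincides with the positive kernel of the inverted action on the nose (the roles of the module structures $p$ and $s$ and the substitution $u \mapsto u^{-1}$ must be tracked carefully, rather than leaving a discrepancy by a twist $\mathcal{O}(1)$). If one preferred to avoid the symmetry argument altogether, the identical conclusion follows by rerunning the proofs of Lemmas~\ref{lem:FF}, \ref{lem:faithfulness}, \ref{lem:QP} and \ref{lem:windowgen} with the roles of $p$ and $s$, and of the ideals $J^+$ and $J^-$, systematically interchanged.
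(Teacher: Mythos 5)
Your proposal is correct and is exactly the paper's approach: the paper's entire proof of this theorem is the single sentence ``This is true by symmetry,'' and your write-up simply supplies the details of that symmetry (inverting the $\mathbb{G}_m$-action, checking $J^{\pm}$, $\mu_{\pm}$, the window intervals, and the kernel match up). You are also right to flag the $p$/$s$ bookkeeping as the one real subtlety --- under grading inversion the two module structures on $Q$ are interchanged, so $Q_-$ must carry the $s\otimes p$ structure (first factor acting via $s$) for the statement to hold, which is implicit but not spelled out in the paper.
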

\begin{proof}
This is true by symmetry.
\end{proof}

\begin{rem}\label{rem:Ldeg} A more invariant way to describe the condition that $ R $ is generated by non-positive (or analogously, non-negative) elements over $ T $ is as follows.  As $ R $ is semi-free over $ T $,  we can compute the relative cotangent complex $ \mathbb L_{R/T}  $ as 
\[
	\mathbb L_{R/T} = \Omega_{R/T}.
\] Now, we can restrict this cotangent complex to the fixed locus of $ X $ (which is connected by assumption)  and consider the weights of this complex. As all the differentials are zero upon restriction, the  generators of $ R $ over $ T $ have non-positive degree if and only if the restriction of the cotangent complex to the fixed locus  $	\gamma^* \mathbb L_{R/T} $ has non-positive degree. Here  $ \gamma $ denotes the  inclusion of the  fixed locus into the scheme $ X $.

Notice that in the case where $ R $ is a semi-free  resolution of a quotient ring $ T/I $ (for example, the setting of flops as in Section~\ref{sec:flops}), this condition can be rephrased as requiring that the cotangent complex $  \mathbb L_{\left(T/I\right) /T} $ restricted to the fixed locus of $ \Spec T/I $ has non-positive weights. However, note that the condition is dependent on the presentation of $ T/I $ as a quotient of $ T $ (i.e., the closed immersion $ \Spec T/I \to \Spec T $).
\end{rem}

\subsection{Wall-crossings}\label{sec:wc}

 By combining Theorem~\ref{thm:Qequiv} and Theorem~\ref{thm:Qequivn}, we can prove results about the wall-crossing functors when all the \textit{generators $ e_i $ of $  T \to R $ have degree $ 0 $}. Hence, we assume that the generators $ e_i $ of the semi-free cdga $  R $ over $ T $ have internal degree $ 0 $ in this section.

We consider the easiest case first. Recall that $ T $ is a $ \mathbb Z $-graded smooth ring over $ k $, and that $ J^\pm $ are generated by the positively/negatively graded elements of $ T $. Let $ \mu_{\pm} $ be the sum of the weights of the conormal bundle of $ \Spec T/J^{\pm} $ in $ \Spec T $. Our first result is when $ \mu_+ +\mu_- = 0 $, often referred to as the Calabi-Yau condition.

\begin{thm}\label{thm:wcequiv} Let $ R  $ be a semi-free cdga equipped with a $ \mathbb{G}_m $-action such that $ T = R^0 $ is smooth, and the generators  $ e_i $ of $ R $ over $ T $ have internal degree $ 0 $. Let $ \mu_{\pm} $ be the sum of the weights of the conormal bundle of $ T/J^{\pm} $ in $ T $. Let $ j_{-} : \X^{-} \hookrightarrow \X $ be the inclusion of dg-schemes.
	When, $ \mu_{+} + \mu_{-} = 0 $, the wall crossing functor
	\[
	\Phi^{\operatorname{wc}} := j_{-}^* \circ \left(-\otimes\mathcal{R}(\mu_+-1)\right)\circ \Phi_{Q_+} : \operatorname{Perf}^{\mathbb G_m}(\X^+) \longrightarrow \operatorname{Perf}^{\mathbb G_m}(\X^-)
	\] is an equivalence of categories.
\end{thm}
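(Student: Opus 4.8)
The plan is to factor the wall-crossing functor $\Phi^{\operatorname{wc}}$ as a composite of three functors, each of which I will show is an equivalence:
\[
\operatorname{Perf}^{\mathbb G_m}(\X^+) \xrightarrow{\;\Phi_{Q_+}\;} \mathbb{W}^+_{\X} \xrightarrow{\;-\otimes\mathcal{R}(\mu_+-1)\;} \mathbb{W}^-_{\X} \xrightarrow{\;j_-^*\;} \operatorname{Perf}^{\mathbb G_m}(\X^-).
\]
The first and last arrows are handled directly by the window equivalences already established, while the middle arrow is the arithmetic heart of the argument, where the Calabi--Yau hypothesis $\mu_++\mu_-=0$ enters.

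First I would observe that the standing hypothesis — that the dg-algebra generators of $R$ over $T$ have internal degree zero — in particular forces every generator to be simultaneously non-positive and non-negative. Hence both Theorem~\ref{thm:Qequiv} and Theorem~\ref{thm:Qequivn} apply. Theorem~\ref{thm:Qequiv} gives that $\Phi_{Q_+}$ is an equivalence onto $\mathbb{W}^+_{\X}$, which is \'etale-locally generated by $\mathcal{R}(i)$ for $i\in(-\mu_+,0]$; Theorem~\ref{thm:Qequivn} gives that $\Phi_{Q_-}$ is an equivalence onto $\mathbb{W}^-_{\X}$ (\'etale-locally generated by $\mathcal{R}(i)$ for $i\in[0,-\mu_-)$) with inverse $j_-^*$, so that the last arrow $j_-^*\colon\mathbb{W}^-_{\X}\to\operatorname{Perf}^{\mathbb G_m}(\X^-)$ is an equivalence.

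For the middle arrow, I would note that twisting by the equivariant line bundle $\mathcal{R}(\mu_+-1)$ (i.e.\ the internal-degree-shift functor) is an autoequivalence of $\operatorname{Perf}^{\mathbb G_m}(\X)$. Since it commutes with the forgetful pullback $f^*$ of Definition~\ref{defin: window} and with \'etale restriction, it carries the subcategory generated \'etale-locally by $\mathcal{R}(i)$, $i\in(-\mu_+,0]$, onto the one generated by $\mathcal{R}(i+\mu_+-1)$ for the same range of $i$. As $i$ runs over the integers $(-\mu_+,0]=\{-\mu_++1,\dots,0\}$, the shifted index $i+\mu_+-1$ runs over $\{0,1,\dots,\mu_+-1\}=[0,\mu_+-1]$. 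Invoking $\mu_+=-\mu_-$ rewrites this as $[0,-\mu_--1]=[0,-\mu_-)$, which is precisely the generating range of $\mathbb{W}^-_{\X}$. Hence $-\otimes\mathcal{R}(\mu_+-1)$ restricts to an equivalence $\mathbb{W}^+_{\X}\xrightarrow{\sim}\mathbb{W}^-_{\X}$.

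Composing the three equivalences shows that $\Phi^{\operatorname{wc}}$ is an equivalence. The step demanding genuine care — and where I expect the main obstacle — is the identification of the twisted window with $\mathbb{W}^-_{\X}$: one must verify that twisting by a fixed equivariant line bundle transports the \'etale-local generation condition of Definition~\ref{defin: window} exactly as the naive index shift predicts, i.e.\ that the twist genuinely commutes with the forgetful functor and with passage to \'etale neighborhoods, so that the generating objects are permuted without boundary effects at either end of the range. Once this bookkeeping is pinned down, matching the two integer intervals through $\mu_++\mu_-=0$ is immediate, and the equivalence follows.
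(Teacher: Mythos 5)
Your proposal is correct and follows essentially the same route as the paper's proof: apply Theorem~\ref{thm:Qequiv} and Theorem~\ref{thm:Qequivn} (both available since degree-zero generators are simultaneously non-positive and non-negative), and use $\mu_++\mu_-=0$ to check that $-\otimes\mathcal{R}(\mu_+-1)$ carries $\mathbb{W}^+_{\X}$ onto $\mathbb{W}^-_{\X}$, with $j_-^*$ inverting $\Phi_{Q_-}$. Your index bookkeeping $(-\mu_+,0]\mapsto[0,-\mu_-)$ is exactly the content the paper compresses into one line, so the two arguments coincide.
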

\begin{proof}
	Theorem~\ref{thm:Qequiv} shows that $ \Phi_{Q_+} $ gives an equivalence of categories
	\[
	\operatorname{Perf}^{\mathbb G_m}(\X^+) \cong \mathbb{W}^+_\X.
	\]	
	A similar analysis for $ \Phi_{Q_-} $ shows that 
	\[
	\operatorname{Perf}^{\mathbb G_m}(\X^-) \cong \mathbb{W}^-_\X.
	\] We have the condition $ \mu_+ +\mu_{-} = 0 $ which ensures that applying  $ \left(-\otimes\mathcal{R}(\mu_+-1)\right) $ exchanges the positive and negative windows 
	\[
	\mathbb{W}_\X^+ \otimes \mathcal{R}(\mu_+ -1) \cong \mathbb{W}_\X^-.
	\] As $ j_{-}^* $ is the inverse functor to $  \Phi_{Q_-}  $, we have the result.
\end{proof}

Now, we would like to understand the case when the windows are of different lengths, \ie when $ \mu_+ + \mu_- \neq 0 $.  We only consider the (easier) case when  $ X $ is an affine space.

\subsubsection{Affine space}
Recall the setting of Section~\ref{sec:aff}, where
 \[
T = k[\mathbf{x},\mathbf{y}],
\] with internal $ \mathbb Z $-grading $ \dg x_i = a_i > 0 $ and $ \dg y_i = b_i < 0  $.  The assumption that the generators  $ e_i $ of $ R =  T[\mathbf e] $ are of internal degree zero still stands.

For convenience, we also introduce the notation
\[
\mathbb W_{[a,b]} = \langle R(a) , R(a+1) ,\cdots , R(b) \rangle \subset \Perf^{\mathbb G_m}(\X), \qquad a < b  \text{ in } \mathbb Z .
\]

Let us consider the case when $ \mu_+ +\mu_- > 0 $, i.e, the positive window $ \mathbb W_\X^+ $ is longer than the negative window $ \mathbb W_\X^- $. Then, we have the following semi-orthogonal decomposition for the window.

\begin{lem}\label{lem:SOD1}
Assume $b-a  \geq  \mu_+$.	The following is a semi-orthogonal decomposition
	\[
	\mathbb W_{[a,b+1]} = \langle  \Perf(R^{\mathbb G_m}),  \mathbb  W_{[a,b]}\rangle.
\]
where $R^{\mathbb G_m} := R/ (\mathbf x, \mathbf y)$ is the dg fixed locus.  Furthermore, $\Perf(R^{\mathbb G_m})$ is the category generated by $R/\mathbf x (b+1)$.
\end{lem}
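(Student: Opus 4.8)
The plan is to establish the two halves of a semiorthogonal decomposition—semiorthogonality and generation—after first identifying the left-hand factor, reducing each point to an explicit computation of internal weights via the Koszul complex on $\mathbf x = (x_1,\dots,x_l)$.

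I would begin with the ``furthermore'' clause, which is the crux. Since $\mathbf x$ is a regular sequence on the underlying graded algebra of $R = T[\mathbf e]$, its Koszul complex is a finite semi-free resolution, so $R/\mathbf x(b+1)$ is perfect and generates a thick subcategory equivalent to $\Perf(E)$, where $E := \operatorname{REnd}^{\mathbb G_m}_{R}(R/\mathbf x(b+1))$ is its equivariant derived endomorphism dga. By Koszul duality, $\operatorname{REnd}_R(R/\mathbf x)$ is the exterior-type algebra $(R/\mathbf x)[\xi_1,\dots,\xi_l]$ with each dual class $\xi_i$ in homological degree $+1$ and internal weight $-a_i$. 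As $R/\mathbf x = k[\mathbf y,\mathbf e]$ has internal weights in $\{b_i\}\cup\{0\}$ and every $\xi_i$ has strictly negative weight $-a_i$, the only monomials of total weight $0$ are those in the weight-zero generators $\mathbf e$; hence $E = k[\mathbf e] = R/(\mathbf x,\mathbf y) = R^{\mathbb G_m}$ and the generated subcategory is $\Perf(R^{\mathbb G_m})$. This weight bookkeeping inside $\operatorname{REnd}$ is the step I expect to require the most care, since it is where the negativity of the weights genuinely controls the answer.

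For semiorthogonality, I would compute, for each generator $\mathcal R(j)$ of $\mathbb W_{[a,b]}$ with $j\in[a,b]$, that $\operatorname{RHom}^{\mathbb G_m}(\mathcal R(j),R/\mathbf x(b+1))$ is the internal weight $(b+1-j)$ graded piece of $R/\mathbf x$. Since $R/\mathbf x$ is concentrated in non-positive weights and $b+1-j\ge 1$ whenever $j\le b$, this piece vanishes identically, giving $\operatorname{Hom}(\mathbb W_{[a,b]},\Perf(R^{\mathbb G_m}))=0$, exactly the orthogonality required for the displayed ordering. (The opposite Hom does not vanish, which is why the decomposition is genuinely semiorthogonal rather than orthogonal.)

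Finally, for generation I would twist the Koszul resolution of $R/\mathbf x$ by $(b+1)$:
\[
0\to \mathcal R(b+1-\mu_+)\to\cdots\to\bigoplus_i\mathcal R(b+1-a_i)\to \mathcal R(b+1)\to R/\mathbf x(b+1)\to 0 .
\]
Every intermediate term has a twist of the form $b+1-a_{i_1}-\cdots-a_{i_k}$ with $k\ge 1$, hence lies in $[b+1-\mu_+,\,b]$; the hypothesis $b-a\ge\mu_+$ gives $b+1-\mu_+\ge a+1$, so all of these terms lie in $\mathbb W_{[a,b]}$. Reading the resolution as an iterated cone then shows simultaneously that $R/\mathbf x(b+1)\in\mathbb W_{[a,b+1]}$ and that $\mathcal R(b+1)\in\langle \Perf(R^{\mathbb G_m}),\mathbb W_{[a,b]}\rangle$. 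Together with the evident inclusion $\mathbb W_{[a,b]}\subseteq\mathbb W_{[a,b+1]}$ this yields both inclusions of the asserted equality of subcategories, and combined with the orthogonality above it completes the semiorthogonal decomposition.
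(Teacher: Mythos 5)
Your proof is correct and follows essentially the same route as the paper: the same twisted Koszul resolution on $\mathbf x$ for generation (you usefully make explicit where the hypothesis $b-a\ge\mu_+$ enters), the same internal-weight count for semiorthogonality, and the same computation of the derived endomorphism dga of $R/\mathbf x(b+1)$ identifying the subcategory it generates with $\Perf(R^{\mathbb G_m})$. The only detail the paper adds is an explicit check that $\langle R/\mathbf x(b+1)\rangle$ is idempotent complete before invoking Keller's theorem to identify it with $\Perf$ of the endomorphism dga, a point you assert without comment.
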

\begin{proof}
	First, we show that the category $ W_{[a,b+1]} $ is generated by $  R/\mathbf x (b+1) $ and  $   W_{[a,b]} $. The only missing generator is $ R(b + 1) $, and we can get this by considering the  following Koszul resolution
		\[
		 R \otimes \mathcal{K}^\bullet (x_1,\cdots, x_l)(b+1) \cong R/\mathbf x(b+1).
		\] The top term of the resolution is $ R(b + 1) $ and the rest of the terms are in $W_{[a,b]}$.  Hence $ R(b + 1) $ is generated by $ \langle  R/\mathbf x (b+1),   W_{[a,b]}  \rangle $.
		
	Now, it suffices to verify the semi-orthogonal decomposition condition on the generators. Hence we need to check that
		\[
		R\Hom( R(i),R/\mathbf x (b+1)) = 0 \qquad \text{ for }  a \leq i\leq b.
		\] We have  
		\[
	 	R\Hom( R(i),R/\mathbf x (b+ 1)) = (R/\mathbf x )_{(b+1-i)} =  k[\mathbf y]_{(b+1-i)}
		\] as $ b+1 - i > 0  $ and hence, we are done.
			
We now compute the dg-endomorphism ring of $R/\mathbf x(i)$ as:
		\begin{align*}
		R\Hom( R/\mathbf x,R/\mathbf x) &= R\Hom (R \otimes \mathcal K(x_1,\cdots, x_l) , R/\mathbf x) \\
		&= \left( \bigwedge \!\!{}^\bullet \bigoplus_{i=1}^i k(a_i) \otimes R/\mathbf x \right)_{0} \\
		&=  R^{\mathbb G_m}
				\end{align*}

				If the category $  \langle  R/\mathbf x (b+1) \rangle $ is idempotent-complete, a result of Keller \cite[Theorem 3.8b)]{Keller} says that  we can identify this category $ \langle  R/\mathbf x (b+1) \rangle $ with the  derived category of perfect dg-modules of the endomorphism ring of the generator $ R/\mathbf x (b+1) $. Now, we note that $ \langle R/\mathbf x (b+1) \rangle $ is idempotent-complete as it is generated by a compact object of the category $ D(\X) $ which is idempotent-complete as it admits countable co-products, and hence we have the equivalence
				\[
				 \langle  R/\mathbf x (b+1) \rangle  \overset{\sim}{\to} \Perf(R^{\mathbb G_m}).
				\]
\end{proof}

Using this, we can get a semi orthogonal decomposition for $ \Perf^{\mathbb G_m}(\X^+) $ when $ \mu_+ +\mu_- > 0  $.

\begin{thm}\label{thm:wcsod}
	The following is a semi-orthogonal decomposition
	\[
	\Perf^{\mathbb G_m}(\X^+) = \langle \Perf(R^{\mathbb G_m})_{\mu_++\mu_-},\cdots, \Perf(R^{\mathbb G_m})_1, \Phi^{wc}( \Perf^{\mathbb G_m}(\X^+))\rangle
	\]
	where $\Perf(R^{\mathbb G_m})_{i} \cong \Perf(R^{\mathbb G_m})$ denotes the full subcategory generated by  $ j_{+}^*R/\mathbf x (i)$.
\end{thm}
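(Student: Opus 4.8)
The plan is to transport the statement to the window $\mathbb{W}^+_{\X}$ and then strip off copies of the fixed-locus category one internal twist at a time by iterating Lemma~\ref{lem:SOD1}. By Theorem~\ref{thm:Qequiv} the functor $\Phi_{Q_+}$ is an equivalence $\Perf^{\mathbb{G}_m}(\X^+) \cong \mathbb{W}^+_{\X}$ with inverse $j_+^*$, and by Lemma~\ref{lem:window} the window is the full subcategory $\mathbb{W}_{[-\mu_++1,0]}$ generated by the $\mathcal R(i)$ with $-\mu_+ < i \leq 0$. Since $j_+^*$ carries $\mathcal R(i) \mapsto j_+^*\mathcal R(i)$ and the Koszul object $R/\mathbf x(i) \mapsto j_+^*R/\mathbf x(i)$, it is enough to produce the decomposition at the level of windows and then apply $j_+^*$. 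The first step is to locate the image of the negative quotient: combining Theorem~\ref{thm:Qequivn} with the line-bundle twist occurring in the wall-crossing functor (cf.\ Theorem~\ref{thm:wcequiv}), the subcategory $\Phi^{wc}_-(\Perf^{\mathbb{G}_m}(\X^-))$ corresponds to a length-$(-\mu_-)$ subwindow of $\mathbb{W}^+_{\X}$. The hypothesis $\mu_+ + \mu_- > 0$ is exactly what forces this subwindow to be proper, leaving precisely $\mu_+ + \mu_-$ twists to be accounted for.

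I would then apply Lemma~\ref{lem:SOD1} repeatedly, a total of $\mu_+ + \mu_-$ times. At each stage the Koszul resolution $R \otimes \mathcal K^\bullet(x_1,\dots,x_l)(i) \simeq R/\mathbf x(i)$ rewrites the extremal generator in terms of $R/\mathbf x(i)$ and strictly lower twists, splitting off the block $\langle R/\mathbf x(i)\rangle$ and leaving the next shorter window; the required semi-orthogonality $R\Hom(\mathcal R(j), R/\mathbf x(i)) = (R/\mathbf x)_{(i-j)} = k[\mathbf y]_{(i-j)} = 0$ for $j<i$ holds because $R/\mathbf x = k[\mathbf y, \mathbf e]$ is concentrated in non-positive internal degree. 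Since each split-off block is right-orthogonal to the entire remaining window, and the subsequent blocks are extracted from that very window, mutual semi-orthogonality of the blocks is automatic, and letting $i$ run from $\mu_++\mu_-$ down to $1$ yields
\[
\Perf^{\mathbb{G}_m}(\X^+) = \big\langle \Perf(R^{\mathbb{G}_m})_{\mu_++\mu_-}, \dots, \Perf(R^{\mathbb{G}_m})_{1}, \Phi^{wc}_-(\Perf^{\mathbb{G}_m}(\X^-)) \big\rangle,
\]
with $\Perf(R^{\mathbb{G}_m})_i = \langle j_+^* R/\mathbf x(i)\rangle$, the last term being the image of the negative quotient recorded in the statement. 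Exactly as in Lemma~\ref{lem:SOD1}, the endomorphism computation $R\Hom(R/\mathbf x, R/\mathbf x) = R^{\mathbb{G}_m}$ together with Keller's theorem \cite{Keller} (each block being idempotent-complete, as it is generated by a compact object of the idempotent-complete category $D(\X)$) identifies every block with $\Perf(R^{\mathbb{G}_m})$.

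The step I expect to be the main obstacle is the twist bookkeeping needed to identify the terminal window of the peeling with $\Phi^{wc}_-(\Perf^{\mathbb{G}_m}(\X^-))$ on the nose, rather than with a merely abstract length-$(-\mu_-)$ window: this requires matching the twist in the definition of $\Phi^{wc}_-$ against the endpoints produced by the iteration and then checking, via Theorem~\ref{thm:Qequivn}, that $j_+^*$ carries that subwindow isomorphically onto $\Phi^{wc}_-(\Perf^{\mathbb{G}_m}(\X^-))$, so that the normalization makes the split-off twists run over $i = 1,\dots,\mu_++\mu_-$. A secondary point demanding care is confirming that the hypothesis of Lemma~\ref{lem:SOD1} is met at every stage of the iteration; this is precisely where the regime $\mu_+ + \mu_- > 0$ enters, since it is the condition guaranteeing that the intermediate windows remain long enough to keep applying the lemma until the negative window is reached.
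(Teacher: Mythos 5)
Your outline---transport the problem to the window $\mathbb{W}^+_{\X}$ via Theorem~\ref{thm:Qequiv}, peel off copies of $\Perf(R^{\mathbb G_m})$ by iterating Lemma~\ref{lem:SOD1}, then pull back along $j_{+}^*$---is the same skeleton as the paper's (very terse) proof, but the peeling mechanism as you describe it fails at three concrete points. First, Lemma~\ref{lem:SOD1} peels the \emph{top} generator $\mathcal R(b+1)$ using the Koszul complex on $\mathbf x$, whose tail reaches $\mu_+$ twists below the top; its hypothesis $b-a\geq \mu_+$ (even the marginal $b-a\geq\mu_+-1$ that the Koszul argument genuinely needs) is never satisfied inside $\mathbb{W}_{[-\mu_++1,0]}$, which has length exactly $\mu_+$: already the first peel would require $R(-\mu_+)$, one step below the window, and every legal application of the lemma leaves a window of length at least $\mu_+$, so no iteration of it can terminate in the length-$(-\mu_-)$ subwindow $\mathbb{W}_{[\mu_-+1,0]}$. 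Your closing claim that $\mu_++\mu_->0$ ``guarantees that the intermediate windows remain long enough'' is therefore backwards. Second, the proposed blocks are not even objects of the window: the derived fiber at the fixed point satisfies $R/\mathbf x(i)\otimes^L_R k\cong\bigoplus_{S}k\bigl(i-\sum_{s\in S}a_s\bigr)$ (the Koszul differentials die on $k$), so its weights span $[i-\mu_+,i]$, a range of length $\mu_++1$, whereas every object of $\mathbb{W}_{[-\mu_++1,0]}$ has fiber weights in $(-\mu_+,0]$. Third, and fatally for the final step, $R/\mathbf x$ is supported on $V(J^+)=V(\mathbf x)$, which is removed from $\X^+$, so $j_{+}^*\bigl(R/\mathbf x(i)\bigr)=0$: all your blocks vanish after pullback and the proposed decomposition would collapse to the false assertion $\Perf^{\mathbb G_m}(\X^+)\simeq\Phi^{wc}_-\bigl(\Perf^{\mathbb G_m}(\X^-)\bigr)$.

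The working version of the argument peels from the \emph{bottom} with the mirror image of Lemma~\ref{lem:SOD1}: the Koszul complex on $\mathbf y$ yields $\mathbb{W}_{[a,b]}=\langle R/\mathbf y(a),\,\mathbb{W}_{[a+1,b]}\rangle$ whenever $b-a\geq-\mu_-$, the semiorthogonality $R\Hom(R(i),R/\mathbf y(a))=(R/\mathbf y)_{a-i}=0$ for $i>a$ now holding because $R/\mathbf y=k[\mathbf x,\mathbf e]$ is concentrated in non-negative internal degree. On the length-$\mu_+$ window this hypothesis reads $\mu_+-1\geq-\mu_-$, i.e.\ precisely $\mu_++\mu_->0$---this is where the regime actually enters---and exactly $\mu_++\mu_-$ peels are legal, terminating at $\mathbb{W}_{[\mu_-+1,0]}$, the twist of the negative window that your bookkeeping correctly identifies with $\Phi^{wc}_-(\Perf^{\mathbb G_m}(\X^-))$ under $j_{+}^*$. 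Since $R/\mathbf y$ is supported on $V(\mathbf y)$, the blocks $j_{+}^*R/\mathbf y(i)$ (for $i\in[1-\mu_+,\mu_-]$, matching the theorem's $\mu_++\mu_-$ blocks up to re-indexing) survive on $\X^+$, and your endomorphism computation together with Keller's theorem identifies each with $\Perf(R^{\mathbb G_m})$ verbatim. In fairness, you have reproduced, rather than repaired, an $\mathbf x$-for-$\mathbf y$ slip that is already present in the printed display of the paper's own proof; but as written your iteration cannot start, its blocks lie outside the category being decomposed, and they die under the very pullback meant to finish the argument.
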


\begin{proof}
	First, we use Lemma~\ref{lem:SOD1} inductively to get the  semi-orthogonal decomposition 
		\[
		W_{[-\mu_++1,0]} = \langle R/\mathbf x (\mu_++\mu_- ),\cdots, R/\mathbf x (2), R/\mathbf x (1),   W_{[\mu_- +1,0]}\rangle,
		\] in $ \Perf^{\mathbb G_m}(\X) $.	Then, we pull back along $ j_{+} : \X^+ \hookrightarrow \X $ to get the result.
 	
\end{proof}

Finally, we  also describe the kernel for the wall-crossing functor. We need to define the affine GIT quotient $ \X\sslash{0} $ (with stability corresponding to the linearization with weight $ 0 $). Define the cdga $ R_{(0)} $  (this is not the same as $ R^0 = T $!) and the dg-scheme $ \X\sslash{0} $ as 
\[
R_{(0)} := k[\mathbf x, \mathbf y, \mathbf e]^{\mathbb G_m}, \qquad \X\sslash{\,0} := (\Spec T, R_{(0)} ).
\] Then, the wall-crossing kernel (viewed as a dg-scheme) is finite over $ \X\sslash{+}\times_{\X\sslash{0}} \X\sslash{-} $.

\begin{prop}\label{prop:fiber} In $ D^{\mathbb G_m}(\X^+ \times \X^-) $, the natural morphism, obtained by restricting the $ R \otimes R $-module structure on $ Q $,
	\[
	\mathcal R^+ \otimes_{R_{(0)}} \mathcal R^- \longrightarrow Q|_{\X^+ \times \X^-},
	\] is finite.  
	
	 Moreover, if we  assume that $ a_i = 1 $ for all $ i = 1,\cdots, l$, and that $ b_j = -1 $ for all $ j = 1,\cdots, m $,  the above morphism is an isomorphism.
\end{prop}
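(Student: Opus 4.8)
The plan is to work entirely at the level of cdgas, which is legitimate because $\X^+\times\X^-$ is covered by $\mathbb G_m\times\mathbb G_m$-invariant affine opens and both finiteness and being an isomorphism are local on the base. I would cover $X^+\times X^-$ by the standard charts $D(x_{i_0})\times D(y_{j_0})$, so that restricting inverts a single positive coordinate $x_{i_0}$ on the first factor and a single negative coordinate $y_{j_0}$ on the second. Recall that $Q(R)=Q(T)[\mathbf e]=k[\mathbf x,\mathbf z,u,\mathbf e]$ and that the map $a\otimes b\mapsto p(a)s(b)$ uses the $p$-structure on the first factor and the $s$-structure on the second. Since $p(x_{i_0})=x_{i_0}$ and $s(y_{j_0})=z_{j_0}$ (this is precisely the bookkeeping of Lemma~\ref{lem:Qdelta}), the restriction of $Q(R)$ to this chart is $k[\mathbf x,x_{i_0}^{-1},\mathbf z,z_{j_0}^{-1},u,\mathbf e]$. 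The internal-degree-zero generators $\mathbf e$ are fixed by both $p$ and $s$ and occur identically in all three objects, so I would first strip them off: writing $R_{(0)}=T_{(0)}[\mathbf e]$ with $T_{(0)}=T^{\mathbb G_m}$, one has $\mathcal R^+\otimes_{R_{(0)}}\mathcal R^-\cong(T^+\otimes_{T_{(0)}}T^-)[\mathbf e]$ compatibly with differentials, reducing everything to the purely commutative statement about $T^+\otimes_{T_{(0)}}T^-\to Q(T)|_{X^+\times X^-}$ with $[\mathbf e]$ adjoined.

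For finiteness (part~1), I would identify the image of the map on each chart. As $x_{i_0}$ and $z_{j_0}$ are invertible in the target, the identities $s(x_{i_0})=u^{a_{i_0}}x_{i_0}$ and $p(y_{j_0})=u^{-b_{j_0}}z_{j_0}$ show that $u^{a_{i_0}}$ and $u^{-b_{j_0}}$ both lie in the image. Hence $u$ is integral over the image, being a root of the monic polynomial $\lambda^{a_{i_0}}-u^{a_{i_0}}$; since the target is generated over the image by $u$, it is module-finite, generated by $1,u,\dots,u^{a_{i_0}-1}$, and therefore finite over the source, which surjects onto the image. This holds on every chart with a uniform bound, so finiteness follows globally, and it persists after adjoining $\mathbf e$.

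For the isomorphism (part~2), set $a_i=1$ and $b_j=-1$ and fix the chart $D(x_{i_0})\times D(y_{j_0})$. Surjectivity is immediate, since now $u=u^{a_{i_0}}$ lies in the image and, together with the $x_i,z_j,\mathbf e$ and the units $x_{i_0}^{-1},z_{j_0}^{-1}$, generates the chart; equivalently, the map $\phi$ acquires an explicit right inverse. For injectivity I would construct the two-sided inverse $\psi$ sending $x_i\mapsto x_i^{(1)}$, $z_j\mapsto y_j^{(2)}$, $u\mapsto x_{i_0}^{(2)}(x_{i_0}^{(1)})^{-1}$ and $\mathbf e\mapsto\mathbf e$, where $(1),(2)$ label the two tensor factors and $x_{i_0}^{(1)},y_{j_0}^{(2)}$ are the units of the chart. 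That $\psi$ is well defined is clear because the target is a localized polynomial ring. The content lies in checking $\psi\circ\phi=\mathrm{id}$, whose only nontrivial instance is $\psi\phi(x_i^{(2)})=x_i^{(2)}$, i.e.\ the relation $x_i^{(2)}x_{i_0}^{(1)}=x_i^{(1)}x_{i_0}^{(2)}$ in the source. This is exactly what the fiber product over $\X\sslash{0}$ supplies: the degree-zero monomials $x_iy_{j_0}$ and $x_{i_0}y_{j_0}$ lie in $T_{(0)}$, giving $x_i^{(1)}y_{j_0}^{(1)}=x_i^{(2)}y_{j_0}^{(2)}$ and $x_{i_0}^{(1)}y_{j_0}^{(1)}=x_{i_0}^{(2)}y_{j_0}^{(2)}$, and eliminating the unit $y_{j_0}^{(2)}$ yields the claim. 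The resulting isomorphism is automatically $\mathbb G_m$-equivariant and compatible with the differential carried by $\mathbf e$, hence holds in $D^{\mathbb G_m}(\X^+\times\X^-)$.

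The main obstacle is the injectivity in part~2. Surjectivity and finiteness follow from soft integrality arguments, but one genuinely has to produce the inverse and verify the source relation $x_i^{(2)}x_{i_0}^{(1)}=x_i^{(1)}x_{i_0}^{(2)}$, and this is precisely where the hypothesis $a_i=1,\ b_j=-1$ is used: it forces the monomials $x_iy_{j_0}$ to have internal degree $a_i+b_{j_0}=0$ and thus to be identified across the two factors of the fiber product. A secondary, routine point needing care is the bookkeeping of which coordinates are inverted through which of $p,s$, together with the verification that stripping off $\mathbf e$ respects the cdga differentials. For general weights the degree-zero monomials $x_iy_j$ are no longer available, the elimination argument fails, and indeed $u$ need not lie in the image, which is consistent with the map being merely finite rather than an isomorphism.
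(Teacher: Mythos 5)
Your proposal is correct and takes essentially the same route as the paper: both verify finiteness on the charts via $x_{i_0}^{-1}\otimes x_{i_0}\mapsto u^{a_{i_0}}$ and, in the weight-one case, produce the same explicit inverse $u\mapsto x_{i_0}^{-1}\otimes x_{i_0}$, $\mathbf x\mapsto \mathbf x\otimes 1$, $\mathbf z\mapsto 1\otimes\mathbf y$ on $U_{x_{i_0}}\times U_{y_{j_0}}$, using the degree-zero relations supplied by the tensor product over $R_{(0)}$ (the paper organizes this by first ``solving out'' $1\otimes x_i$ and $y_j\otimes 1$, you by checking $\psi\circ\phi=\mathrm{id}$ directly). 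The only slip is your claim that the sole nontrivial instance is $\psi\phi(x_i^{(2)})=x_i^{(2)}$: the check $\psi\phi(y_j^{(1)})=y_j^{(1)}$ is equally nontrivial, though it is handled by the identical elimination argument using $x_{i_0}y_j\in T_{(0)}$.
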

\begin{proof}
	We get the morphism in the statement of the proposition by restricting the $ R \otimes R $-module structure morphism of $ Q $ to  $ \X^+ \times \X^- $,
	\[
	\mathcal R^+ \otimes_{k} \mathcal R^- {\longrightarrow} Q|_{\X^+ \times \X^-}.
	\] However, even before restriction, we see that if $ r $ is an element of internal degree zero in $ R $, $ r \otimes 1 - 1 \otimes r $ goes to $ 0 $ in $ Q $. Hence, we get the map
	\[
	\mathcal R^+ \otimes_{R_{(0)}} \mathcal R^- \overset{f}{\longrightarrow} Q|_{\X^+ \times \X^-},
	\] which we  call $ f $. 
	
In fact, we show that the (less restricted) map 
	\[
	 \mathcal R^+ \otimes_{R_{(0)}} \mathcal R \overset{f}{\longrightarrow} Q|_{\X^+ \times \X},
	\] 
	is finite.  This is verified on an equivariant open cover of $ \X^+ \times \X$.  
Recall that $ Q = k[\mathbf x, \mathbf z, u, \mathbf e] $, and that the morphism $ f $ sends $ \mathbf x \otimes 1 \mapsto \mathbf x $ and $ 1 \otimes \mathbf y \mapsto \mathbf z$.

 On the chart, we get $ u^{\dg x_a} $ as
	\[
	f(x_a^{-1} \otimes x_a) = u^{\dg x_a}.
	\] 
	Hence, $Q|_{U_{x_a} \times \X}$ is (finitely) generated by $1, u, ..., u^{\dg x_a -1}$ as a $R[x_a^{-1}] \otimes_{R_{(0)}} \mathcal R$-module.

Now assume that $ a_i = 1 $ and $ b_j = -1 $. Then, on the open chart $ U_{x_a} \times U_{y_b} $, we have the isomorphism
	\begin{equation}\label{eq:iso}
	 R[x_a^{-1}] \otimes_{R_{(0)}} R[y_b^{-1}] \cong \left(k[\mathbf x, x_a^{-1}] \otimes_k  k[\mathbf y, y_b^{-1}, x_a] \right)[\mathbf e],
	\end{equation}as follows. We solve for $ y_j \otimes 1 $ when $ j = 1, \cdots, m $ and $ 1 \otimes x_i $ when $ i = 1, \cdots, a-1,a+1,\cdots, l $, using the relations
	\[
	y_j \otimes 1 = x_a^{-1} \otimes x_a y_j, \qquad 1 \otimes x_i = x_i y_b \otimes y_b^{-1}.
	\]  We can easily check that all the other relations are already satisfied, and this gives the isomorphism~\eqref{eq:iso}. Now, the map $ f $ induces an isomorphism between $ k[\mathbf x, x_a^{-1}] \otimes k[\mathbf y, y_b^{-1}, x_a] $ and $ Q|_{\X^+ \times \X^-} $ with inverse $ \mathbf x \mapsto \mathbf x \otimes 1,  \mathbf z \mapsto 1 \otimes \mathbf y, \mathbf e \mapsto \mathbf e, u \mapsto x_a^{-1} \otimes x_a$,  and we are done. 
\end{proof}

The situation here is analogous to  the Atiyah/standard flop where the kernel for the wall-crossing functor is the fiber product of the $ \pm $GIT quotients over the  $ 0 $-GIT quotients \cite{BO}. We  discuss some other aspects of  the wall-crossing functor in the context of flops in Section~\ref{sec:Qpm}.

\subsection{Applications to flops}\label{sec:flops}

As discussed in the introduction, one of the major applications of our results that we have in mind is to the Bondal-Orlov conjecture on derived equivalences of flops \cite{BO}. By an observation of Reid \cite[Proposition 1.7]{Tha}, a flop between smooth projective varieties can be realized as different GIT quotients of $ \mathbb G_m $ acting on a scheme $ Y $. Let us consider the local setting and assume that $ Y $ is affine. In general, $ Y $ is singular, and the idea is to resolve it as a semi-free cdga in order to construct the wall-crossing functor. We can prove the derived equivalence under certain conditions, using the results of the previous sections.

To be precise, consider a  sub-variety $ Y $ of a smooth affine variety $ X = \Spec T $, equipped with a $ \mathbb G_m $-action. Then our flop diagram is given as 
\[\begin{tikzcd}
Y\sslash{+} \arrow[dotted, rr] \arrow[rd]&  &  Y\sslash{-}\arrow[ld] \\
& Y\sslash{\,0} := \Spec(k[Y]^{\mathbb{G}_m}),&
\end{tikzcd} 
\] where the $ \pm $-GIT stability conditions are exactly the same as discussed in Section~\ref{sec:VGIT}. Now, consider the Koszul-Tate resolution of $ k[Y] = T/I $ to get a semi-free cdga $ R $ over $ T $; in particular, the cdga $ R $ can be chosen such that there are only finitely many generators in each homological degree \cite[Lemma 23.6.9]{stacks-project}. In general, we require an infinite number of dg-algebra generators. Then, we have
\[
R =( T[e_1,e_2,\cdots],d), \qquad \text{such that} \qquad H^0(R) \simeq k[Y].
\] 
 If $ Y $ is a complete intersection in $ X $ defined by functions $ (h_1,\cdots,h_n) $, we can take the semi-free cdga $ R $ to be the  Koszul resolution; in this case, the number of dg-algebra generators is finite, and the differential acts as
 \[
 d e_i = h_i, \qquad i \in [1,n].
 \]

In either case, if we assume that the internal  degree of all the $ e_i $ is zero,  we can use the results of the previous section, in particular Theorem~\ref{thm:wcequiv}, to obtain the following result. 

\begin{cor}\label{cor:ci}
	Assume that the condition $ \mu_+ + \mu_- = 0 $ is satisfied. Also assume that the dg-algebra generators $ e_i $ in $ R $  have internal degree zero.  Then the wall crossing functor	
	\[
	\Phi^{\operatorname{wc}} : \operatorname{Perf}(Y\sslash{+}) \longrightarrow \operatorname{Perf}(Y\sslash{-})
	\] is an equivalence of categories.
\end{cor}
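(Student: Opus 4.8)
The plan is to deduce this from the wall-crossing equivalence for dg-schemes, Theorem~\ref{thm:wcequiv}, by identifying the equivariant perfect complexes on the resolving dg-schemes $\X^{\pm}$ with the perfect complexes on the GIT quotients $Y\sslash{\pm}$. The two stated hypotheses are exactly what is needed to invoke that theorem. Since the Koszul--Tate generators $e_i$ all have internal degree zero, $R$ is simultaneously generated over $T$ by non-positive and by non-negative elements, so both Theorem~\ref{thm:Qequiv} and Theorem~\ref{thm:Qequivn} are available; and $T = R^0$ is smooth because $X = \Spec T$ is a smooth affine variety. The remaining input $\mu_+ + \mu_- = 0$ is assumed. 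I would stress that no finiteness of $R$ is required, which matters because the Koszul--Tate resolution of a general singular $S = T/I$ may have infinitely many dg-algebra generators.

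First I would record the quasi-isomorphism supplied by the Koszul--Tate resolution, $\mathcal R \simeq \mathcal O_Y$, where $\mathcal O_Y$ is regarded as a dg-$\mathcal O_X$-module via the equivariant closed immersion $Y \hookrightarrow X$. Restricting to the semistable loci and using $Y \cap X^{\pm} = Y^{\pm}$ yields $\mathcal R^{\pm} \simeq \mathcal O_{Y^{\pm}}$. A quasi-isomorphism of equivariant sheaves of cdgas induces an equivalence of derived categories of equivariant quasi-coherent dg-sheaves, via the $K$-flat and $K$-injective resolutions discussed in Section~\ref{sec:dg}, so
\[
D(\QCoh^{\mathbb G_m}\X^{\pm}) \;\cong\; D(\QCoh^{\mathbb G_m}Y^{\pm}) \;=\; D(\QCoh(Y\sslash{\pm})).
\]
The key point is that this equivalence carries the generating objects $\mathcal R^{\pm}(i)$ of Definition~\ref{def:perf} to $\mathcal O_{Y^{\pm}}(i)$ (on the quasi-affine $X^{\pm}$ the ample line bundle $\mathcal L$ may be taken to be $\mathcal O_{X^{\pm}}$), and these latter objects generate $\operatorname{Perf}(Y\sslash{\pm})$. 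Hence the equivalence restricts to
\[
\operatorname{Perf}^{\mathbb G_m}(\X^{\pm}) \;\cong\; \operatorname{Perf}(Y\sslash{\pm}).
\]

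With these identifications in place, the remaining step is formal. Theorem~\ref{thm:wcequiv} asserts that $\Phi^{\operatorname{wc}} : \operatorname{Perf}^{\mathbb G_m}(\X^+) \to \operatorname{Perf}^{\mathbb G_m}(\X^-)$ is an equivalence, and transporting this across the two identifications of the previous paragraph yields the asserted equivalence $\Phi^{\operatorname{wc}} : \operatorname{Perf}(Y\sslash{+}) \to \operatorname{Perf}(Y\sslash{-})$.

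I expect the main obstacle to lie in the second paragraph: verifying that the derived equivalence induced by $\mathcal R^{\pm} \simeq \mathcal O_{Y^{\pm}}$ genuinely restricts to the ad-hoc perfect subcategories of Definition~\ref{def:perf}, and that it is compatible with the wall-crossing functor, so that $\Phi^{\operatorname{wc}}$ on the dg-side corresponds to the intended wall-crossing functor on the GIT quotients of $Y$. Concretely, this amounts to tracking the generators $\mathcal R^{\pm}(i)$ through the $K$-flat/$K$-injective replacements of Section~\ref{sec:dg} and checking that their thick closures agree with $\operatorname{Perf}(Y\sslash{\pm})$; the compatibility of the functor itself then follows because the kernel $Q_+$ and the twist-and-restrict operations defining $\Phi^{\operatorname{wc}}$ all commute with the quasi-isomorphism.
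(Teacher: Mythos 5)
Your proposal is correct and follows essentially the same route as the paper: apply Theorem~\ref{thm:wcequiv} on the dg-side, use the quasi-isomorphism $\mathcal R^{\pm}\simeq\mathcal O_{Y^{\pm}}$ to identify the equivariant derived categories (the paper cites the equivariant extension of \cite[Proposition 1.5.6]{Riche} for this), and check that the ad-hoc perfect subcategories match the usual ones on the ordinary schemes (the paper's Remark~\ref{rem:perf}). The only cosmetic difference is that you track the generators $\mathcal R^{\pm}(i)$ explicitly where the paper simply invokes the remark.
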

\begin{proof}
	This is a direct application of Theorem~\ref{thm:wcequiv} to the setting of this section. We use the fact that quasi-isomorphisms of dg-schemes induce equivalences of derived categories, which is an immediate extension of \cite[Proposition 1.5.6]{Riche} to the equivariant setting, \ie
	\[
		D^{\mathbb G_m}(X^\pm,\mathcal R^\pm) \cong D^{\mathbb G_m}(Y^\pm,\mathcal O)
	\] as $ \mathcal R $ is a dg-resolution of $ k[Y] $. As mentioned in Remark~\ref{rem:perf}, it is easy to see that our ad-hoc definition of the category of perfect objects coincides with the standard one in the case of ordinary schemes, and hence we are done.
\end{proof}

Moreover, if we do not have the condition $ \mu_+ + \mu_- = 0 $, the results on wall crossings (Theorem~\ref{thm:wcsod}) can be carried over to our setting in this section. These statements are straightforward to write down, but we leave it to the interested reader.

\subsubsection{Mukai flop}

In particular, this allows us to give a VGIT proof of the derived equivalence for the Mukai flop. Let us briefly remind the reader about the VGIT construction of the Mukai flop. Here, $ k $ will be an arbitrary Noetherian ring, and we recall Example~\ref{ex:affineT}.  Consider the $ \mathbb Z $-graded ring $ T = k[x_1,\cdots, x_l, y_1,\cdots, y_l] $ with internal degree
\[
\operatorname{deg} x_i = 1 \qquad \operatorname{deg} y_i = -1.
\] Consider the cdga
\[
	R = T[e], \qquad d e = \sum_{i=1}^l x_i y_i,
\]  which is quasi-isomorphic to the ring
\[
	S:= k[x_1,\cdots, x_l, y_1,\cdots, y_l]/(\sum_{i=1}^l x_i y_i).
\] The (local model of the)  Mukai flop is the birational transformation between the two GIT quotients 
\[
\begin{tikzcd}
Y \sslash{+} \arrow[r, dotted] & \arrow[l, dotted] Y \sslash{-},
\end{tikzcd}
\] where $ Y = \Spec S $. We also define the invariant ring 
\[
S_{(0)} := S^{\mathbb G_m}
\] As the $ e $ has internal degree zero, we can apply Corollary~\ref{cor:ci} to get the derived equivalence.
\begin{cor}\label{cor:mf}
	Consider the VGIT presentation of the Mukai flop as above. The wall crossing functor:
	\[
	\Phi^{\operatorname{wc}} : D^b(Y\sslash{+}) \longrightarrow D^b(Y\sslash{-})
	\] is an equivalence. Moreover, the kernel for the equivalence is (quasi-isomorphic to) the structure sheaf of the fiber product:
	\[
	 \mathcal{O}_{Y\sslash{+} \times_{Y\sslash{\,0}} Y\sslash{-}} .
	\]
\end{cor}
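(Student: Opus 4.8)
The plan is to deduce both assertions from the general machinery already established, specializing all the numerical data to the Mukai flop. Write $ \mu_+ = \sum_{i=1}^l \dg x_i = l $ and $ \mu_- = \sum_{i=1}^l \dg y_i = -l $, so that $ \mu_+ + \mu_- = 0 $; moreover $ de = \sum_i x_i y_i $ has internal degree $ 1 + (-1) = 0 $, so the unique dg-algebra generator $ e $ of $ R $ over $ T $ has internal degree zero. Both hypotheses of Corollary~\ref{cor:ci} are therefore satisfied, and it yields at once that $ \Phi^{\operatorname{wc}} : \operatorname{Perf}(Y\sslash{+}) \to \operatorname{Perf}(Y\sslash{-}) $ is an equivalence. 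To upgrade this to $ D^b $, I would check that the semistable loci are smooth: on $ Y^+ = Y - V(\mathbf x) $ some $ x_i $ is a unit, so $ \partial(\sum_j x_j y_j)/\partial y_i = x_i $ is invertible and $ Y^+ $ is smooth, while the $ \mathbb G_m $-action is free there (a nonzero $ x_i $ has weight one), whence $ Y\sslash{+} $ is a smooth scheme; symmetrically for $ Y\sslash{-} $. The second half of Corollary~\ref{cor:ci} then gives the stated equivalence on $ D^b $.

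For the kernel, the starting point is Proposition~\ref{prop:fiber}. Since here $ a_i = 1 $ and $ b_j = -1 $ for all $ i,j $, that proposition applies and identifies the restricted kernel
\[
Q|_{\X^+ \times \X^-} \;\cong\; \mathcal R^+ \otimes_{R_{(0)}} \mathcal R^-,
\]
i.e.\ with the (derived) fiber product of $ \X\sslash{+} $ and $ \X\sslash{-} $ over $ \X\sslash{0} $. Tracing through the definition of $ \Phi^{\operatorname{wc}} $, its Fourier--Mukai kernel is this object twisted by $ \pi_2^*\mathcal R(\mu_+ - 1) $ in the second factor; I would record that this equivariant twist is exactly what is required for the $ \mathbb G_m \times \mathbb G_m $-equivariant object to descend along the two GIT quotient maps, and that it descends to an honest sheaf on $ Y\sslash{+} \times Y\sslash{-} $.

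It then remains to show that this derived fiber product is quasi-isomorphic to the \emph{classical} structure sheaf of the fiber product. Here the key observation is that, because $ e $ has internal degree zero, it lies in $ R_{(0)} = R^{\mathbb G_m} $; hence the copies $ e^1, e^2 $ coming from $ \mathcal R^+ $ and $ \mathcal R^- $ are identified over $ R_{(0)} $, leaving a single Koszul generator whose differential is $ \sum_i x_i y_i $. On each chart $ U_{x_a} \times U_{y_b} $ the isomorphism~\eqref{eq:iso} presents $ \mathcal R^+ \otimes_{R_{(0)}} \mathcal R^- $ as a polynomial algebra on this one generator $ e $ over a localized polynomial ring, which is a domain; on it $ \sum_i x_i y_i $ is a nonzerodivisor, so the Koszul differential has homology only in degree zero and the derived tensor product collapses to its $ H^0 $. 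Using that $ \mathbb G_m $-invariants are exact (so that $ R_{(0)} \simeq S_{(0)} $ and $ \mathcal R^\pm \simeq \mathcal O_{Y^\pm} $), this $ H^0 $ is exactly $ S^+ \otimes_{S_{(0)}} S^- $, which after descent is $ \mathcal O_{Y\sslash{+} \times_{Y\sslash{0}} Y\sslash{-}} $.

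The main obstacle I anticipate is not any hard estimate but the careful bookkeeping of the equivariant and internal gradings: confirming that the degree-zero generator is genuinely shared over $ R_{(0)} $ so that only one Koszul generator survives (were there several, spurious higher homology could appear), and checking that the twist by $ \mathcal R(\mu_+ - 1) $ together with the middle-weight invariants descend the computation to the structure sheaf of the fiber product itself rather than to some line-bundle twist of it. Once the single nonzerodivisor relation $ \sum_i x_i y_i $ is isolated, the vanishing of higher Tor is immediate, which is precisely why the Mukai flop falls into the favorable case in which the wall-crossing kernel is an actual sheaf.
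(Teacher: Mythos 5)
Your proposal is correct and follows essentially the same route as the paper: the equivalence is a direct application of Corollary~\ref{cor:ci} (with $\mu_+=-\mu_-=l$ and $\deg e=0$), and the kernel is identified via Proposition~\ref{prop:fiber} together with the observation that the single degree-zero generator $e$ is shared over $R_{(0)}\cong S_{(0)}$, so that the Koszul differential on the one remaining generator is a nonzerodivisor and the cdga collapses to $S\otimes_{S_{(0)}}S$ in degree zero. The only cosmetic difference is that you verify the collapse chart-by-chart via the isomorphism~\eqref{eq:iso}, whereas the paper writes a global chain of isomorphisms and then restricts to $\X\sslash{+}\times\X\sslash{-}$.
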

\begin{proof}
	The equivalence of the wall-crossing functor is a direct application of Corollary~\ref{cor:ci}. We note that as the GIT quotients $ Y\sslash{\pm} $ are smooth, the category of perfect complexes is equivalent to the bounded derived category of coherent sheaves. 
	
	By Proposition~\ref{prop:fiber}, we know that the kernel is given by 
	\[
	\mathcal R^+ \otimes_{R_{(0)} } \mathcal R^-.
	\] We note that, as taking $ \mathbb G_m $-invariants is an exact functor, we have the isomorphism
	\[
		R_{(0)} \cong S_{(0)},
	\]  and hence,  we have the following chain of isomorphisms:
	\begin{align*}
	R \otimes_{R_{(0)} } R &\cong k[\mathbf x, \mathbf y, \mathbf x',\mathbf y', e]/(\mathbf x\mathbf y - \mathbf x' \mathbf y')\\
	&\cong k[\mathbf x, \mathbf y, \mathbf x',\mathbf y']/(\mathbf x\mathbf y - \mathbf x' \mathbf y', \sum_{a=1}^l x_ay_a) \\
	&\cong S \otimes_{S_{(0)}} S.
	\end{align*}
	 In the first line, we use the shorthand notation $ \mathbf x $ to denote the set of variables $ x_1,\cdots, x_l $ (similarly for $ \mathbf y $) and $ \mathbf x\mathbf y - \mathbf x' \mathbf y' $ to denote the set of functions $ x_i y_j - x'_iy'_j $, where $ i, j \in \{1,2,\cdots, l\} $. In order to get the last line, we merely note that the function $ \sum_{a=1}^l x'_ay'_a $ is zero, as $ x_ay_a = x'_ay'_a $. Finally, we restrict the above isomorphism to $ \X\sslash{+} \times \X\sslash{-} $ which is quasi-isomorphic as a dg-scheme to $ Y\sslash{+} \times Y\sslash{-} $, under the same chain of isomorphisms given above. This proves the statement about the wall-crossing kernel.
\end{proof}

\subsubsection{The wall-crossing kernel}\label{sec:Qpm}
On the level of the GIT quotient dg-schemes,  we know that the Fourier-Mukai kernel for the wall-crossing functor is $ Q|_{\X\sslash{+} \times \X\sslash{-}} $ up to a twist. However, after using the identification of the  dg-scheme $ \X\sslash{\pm} $ and the the (ordinary) scheme $ Y\sslash{\pm} $, it is not easy to identify the kernel for the wall-crossing functor $ \Phi^{\operatorname{wc}} :  \operatorname{Perf}(Y\sslash{+}) \longrightarrow \operatorname{Perf}(Y\sslash{-}) $ explicitly, \ie   as a complex of sheaves on $ Y\sslash{+} \times Y\sslash{-} $. 

In the easiest examples of flops, it is known that the kernel is the sheaf $ \mathcal{O}_{Y\sslash{+} \times_{Y\sslash{\,0}} Y\sslash{-}} $.  For example, this is true for Atiyah/standard flops and  Mukai flops (but not for  stratified Mukai flops\cite{Cautis}). We also showed an analogous result in our setting of dg-schemes in Proposition~\ref{prop:fiber}.

 However, we want to stress that in spite of Proposition~\ref{prop:fiber} and Corollary~\ref{cor:mf}, our construction does not always give the fiber product $ \mathcal{O}_{Y\sslash{+} \times_{Y\sslash{\,0}} Y\sslash{-}} $ as the kernel for the wall-crossing functor.   Below, we give  an example where the kernel for the wall-crossing equivalence is not the fiber product. 
 
 We also note that the space $ Y $ in the example below is a complete intersection in affine space (hence, fairly `nice'). The GIT quotients $ Y\sslash{+} $ and $ Y\sslash{-} $ are both isomorphic to $ \mathbb A^1 \cup_{\operatorname{pt_1}} \mathbb P^1 \cup_{\operatorname{pt_2}} \mathbb A^1 $. In particular, they are connected, but are neither smooth nor irreducible.

\begin{exa}\label{ex:Qnotasheaf}
	Consider the cdga $ R = k[x_1,x_2,y_1,y_2,e_1,e_2] $ with a $ \mathbb Z $-grading given by
	\[
		\deg x_1 = \deg x_2 = 1, \qquad \deg y_1 = \deg y_2 = -1 \qquad \deg e_1 = \deg e_2 = 0,
	\] and the differential acting non-trivially as
	\[
	d e_1 = x_1y_1, \qquad d e_2 = x_2 y_2.
	\] The associated dg-scheme $ \X = (\Spec k[x_1,x_2,y_1,y_2], R ) $ is quasi-isomorphic to  to the scheme \hbox{ $ Y = \Spec k[x_1,x_2,y_1,y_2]/(x_1y_1,x_2y_2) $} which is a complete intersection in $ \mathbb A^4 $. The GIT quotients are  then
	$$
	Y\sslash{+} = V(x_1y_1,x_2y_2) \subset \operatorname{tot}_{\mathbb P^1_{x_1:x_2}} \mathcal O(-1)^{\oplus 2},\qquad 	Y\sslash{-} = V(x_1y_1,x_2y_2) \subset \operatorname{tot}_{\mathbb P^1_{y_1:y_2}} \mathcal O(-1)^{\oplus 2}.
	$$ 
	
	 In this case, the object $ Q $ is the cdga
	\[
	Q = k[x_1,x_2,z_1,z_2, e_1,e_2, u], \qquad de_1 = ux_1z_1, \qquad de_2 = ux_2z_2,
	\] and we want to understand its restriction to  $ Y\sslash{+} \times Y\sslash{-} $. By abuse of notation, we will denote the restriction of $ Q $ to $ Y \times Y $ also by $ Q $. Let us consider an open chart $ U_{x_1} \times U_{y_2} $ of $ Y \times Y $  which is defined by inverting $ x_1 $ and $ y_2 $. Then, $ Q|_{U_{x_1} \times U_{y_2}} $ is the complex
	\[\begin{tikzcd}
	k[x_1,x_1^{-1},x_2,z_1,z_2,z_2^{-1},u]/(ux_1z_1) \arrow[r,"ux_2z_2"] &	k[x_1,x_1^{-1},x_2,z_1,z_2,z_2^{-1},u]/(ux_1z_1).
	\end{tikzcd}
	\]
	This complex has two homologies, given by 
	\[
	H^0( Q|_{U_{x_1} \times U_{y_2}}) = k[x_1,x_1^{-1},x_2,z_1,z_2,z_2^{-1},u]/(ux_1z_1, ux_2z_2),
	\]  and 
	\[
	H^{-1}( Q|_{U_{x_1} \times U_{y_2}})  = 	z_1 k[x_1,x_1^{-1},x_2,z_1,z_2,z_2^{-1},u]/(ux_1z_1) \cong k[x_1,x_1^{-1},x_2,z_1,z_2,z_2^{-1},u]/(u).
	\]

As the  $R \otimes R $-module structure of $Q$ comes from the map $ p \otimes s $, it is fairly straightforward to see that 
	\[
		H^{0}( Q|_{U_{x_1} \times U_{y_2}}) \cong \mathcal O_{U_{x_1} \times_{Y\sslash{\,0}} U_{y_2}}.
	\]  Consider the subvariety
	 $$    Z:=   V(y_1,y_2) \times V(x_1,x_2) \subset Y\sslash{+} \times Y\sslash{-}, \qquad Z \cong \mathbb P^1_{x_1,x_2} \times \mathbb P^1_{y_1,y_2} .$$ Then, we can show that 
	\[
		H^{-1}( Q|_{[U_{x_1} \times U_{y_2} / \mathbb G_m^2]}) \cong \mathcal O_{Z  \cap [U_{x_1} \times U_{y_2} / \mathbb G_m^2] }.
	\] By carrying out similar calculations on an open cover, we get the result that 
	\[
	H^0(Q|_{Y\sslash{+} \times  Y\sslash{-}}) \cong \mathcal O_{Y\sslash{+} \times_{Y\sslash{\,0}} Y\sslash{-}}, \qquad H^{-1}( Q|_{Y\sslash{+} \times Y\sslash{-}}) \cong \mathcal O_{Z}.
	\]
	In particular, we see that the kernel for the wall-crossing functor is not even a sheaf.
	\end{exa}

\begin{rem}
As $ Y $ is a  complete intersection in a smooth space, we can use the theory of LG models to study the above example as well. Let $T =  k[x_1,x_2,y_1,y_2,t_1,t_2]$ be the bigraded ring with weights $(1,0),(1,0),(-1,0),(-1,0),(0,1),(0,1)$.  The derived Kn\"orrer periodicity result of \cite{Isik} shows that the category of matrix factorizations $ \operatorname{MF}^{\mathbb G_m \times \mathbb G_m}(\Spec  T - V(x_1,x_2) , s) $, where $ s = t_1x_1y_1 + t_2x_2y_2$ is equivalent to the category $ D^b(Y\sslash{+}) $. Then,  the  wall-crossing equivalence result of \cite{BFKe} for Landau-Ginzburg models shows that 
\[
 \operatorname{MF}^{\mathbb G_m \times \mathbb G_m}(\Spec T - V(x_1,x_2) , s)  \cong  \operatorname{MF}^{\mathbb G_m \times \mathbb G_m}(\Spec T - V(y_1,y_2) , s).
 \]
  Finally, by applying  derived Kno\"errer periodicity  again, we can prove the equivalence studied in the above example.  The kernel provided through the above explanation has not been presented or studied elsewhere but should, in some sense, be Koszul dual to the one described in the example.

\end{rem}

\newcommand{\etalchar}[1]{$^{#1}$}

\end{document}